\tikzset{x tick label/.style={anchor=north, minimum width=7mm}}
\definecolor{gray70}{RGB}{179,179,179}
\definecolor{gray73}{RGB}{186,186,186}
\definecolor{darkgreen}{RGB}{0,100,0}
\definecolor{mygreen}{RGB}{34,139,34}
\theoremstyle{plain}
\newtheorem{theorem}{Theorem}[section]    
\newtheorem{lemma}[theorem]{Lemma}    
\theoremstyle{definition}
\newtheorem{corollary}[theorem]{Corollary}  
\newtheorem{remark}[theorem]{Remark}
\newtheorem{remarks}[theorem]{Remarks}
\renewenvironment{proof}{\noindent\textbf{Proof. }\ignorespaces}{\unskip\hspace*{\fill}\(\Box\)}
\newcommand{\N}{\mathbb{N}}
\newcommand{\Z}{\mathbb{Z}}
\newcommand{\C}{\mathbb{C}}
\newcommand{\R}{\mathbb{R}}
\newcommand{\D}{\mathbb{D}}
\newcommand{\G}{{\mathbf \Gamma}}
\newcommand{\Ker}{\textnormal{ker}}
\newcommand{\sh}{{\mathbf S}}
\renewcommand{\pi}{\uppi}
\renewcommand{\tau}{\uptau}
\renewcommand{\delta}{\updelta}
\renewcommand{\Phi}{\varPhi}
\renewcommand{\Lambda}{\varLambda}
\DeclareMathOperator{\rank}{rank}
\newif\ifkp@upRm
\DeclareSymbolFont{Letters}{OML}{jkp}{m}{it}
\DeclareMathSymbol{\partialup}{\mathord}{Letters}{128}
\renewcommand{\partial}{\partialup}
\renewcommand{\text}{\textup}
\begin{document}

\hyphenation{Ma-the-ma-tik ma-the-ma-tischen ana-lysis multi-pli-ca-tion de-riva-tive de-riva-tives trans-form-ation ap-proxi-mate eu-clid-ean de-ter-mine rep-re-sen-ta-tion rep-re-sen-ta-tions posi-tive per-pen-dicu-lar math-emat-ics com-pari-son de-ter-mined exam-ine exam-ined ex-peri-ment ex-peri-ments de-ter-min-ant de-ter-min-ants exact-ly re-sult re-sults es-ti-ma-tion con-eigen-val-ues}

\pdfbookmark[1]{Title}{title}
\title{Application of the AAK theory for sparse approximation of exponential sums}
\author {Gerlind Plonka\footnote{Institute for Numerical and Applied Mathematics, G\"ottingen University, Lotzestr.\ 16-18,  37083 G\"ottingen, Germany. Email: plonka@math.uni-goettingen.de} \qquad Vlada Pototskaia\footnote{Institute for Numerical and Applied Mathematics, G\"ottingen University, Lotzestr.\ 16-18,  37083 G\"ottingen, Germany. Email: v.pototskaia@math.uni-goettingen.de}}

\maketitle

\pdfbookmark[1]{Abstract}{abstract}
\abstract{In this paper, we derive a new method for optimal $\ell^{1}$- and $\ell^2$-approximation of discrete signals on ${\N}_{0}$
whose entries can be represented as an exponential sum of finite length. 
Our approach employs Prony's method in a first step to recover the exponential sum that is determined by the signal. In the second step we use the AAK-theory to derive an algorithm for computing a shorter exponential sum that approximates the original signal in the $\ell^{p}$-norm well.
AAK-theory originally determines best approximations of bounded periodic functions in Hardy-subspaces. 
We rewrite these ideas for our purposes and give a proof of the used AAK theorem based only on basic tools from linear algebra and Fourier analysis. The new algorithm is tested numerically in different examples.
\medskip

{\bf Key words}: Exponential sums, Prony method, AAK theory, infinite Hankel matrices, structured low-rank approximation, rational approximation.

{\bf Mathematics Subject Classification}: 15A18, 41A30, 42A16, 65F15.
}

\section{Introduction}
\setcounter{equation}{0}

In signal processing and system theory, we consider the problem of sparse approximation of structured signals.
In many applications it can be assumed that the signal is either exactly or approximately a finite linear combination of non-increasing exponentials with complex exponents, i.e., 
${\mathbf f}:= (f_{k})_{k=0}^{\infty}$ satisfies 
\begin{equation}\label{eq:expsum}
f_{k} := f(k) = \sum_{j=1}^N a_j\, z_{j}^{k}\,, 
\end{equation}
where $a_{j} \in {\mathbb C} \setminus \{ 0 \}$ and with pairwise different $z_{j} \in \mathbb D:= \{z\in {\mathbb C}:\, 0 < |z| < 1\}$.
The problem of recovering ${\mathbf f}$ from  a suitable number of signal values $f(k)$, $k =0, 1, \ldots , M$ with $M\ge 2N-1$ is 
a well-studied parameter estimation problem. It can be solved using  a stabilized Prony-like method as e.g.\ ESPRIT \cite{Roy} or the 
approximative Prony method (APM), \cite{APM}.  For a recent generalization and a review on Prony methods  for recovery of structured  functions we refer to  \cite{PP13,PT14}.
Beside system theory, the approximation of special functions (as e.g. $1/x$) by finite exponential sums is also of high interest for solving higher dimensional integral equations as e.g.\ in coupled cluster analysis in quantum chemistry, see \cite{BH05,HB}.
\smallskip

For  sparse signal approximation we are interested in solving the following problem.
For a given  ${\mathbf f}$ of the form (\ref{eq:expsum}), we want to find a
 new signal $ \tilde{\mathbf f}:= (\tilde{f}_{k})_{k=0}^{\infty}$ of the form
\begin{equation} 
\label{ff} \tilde{f}_{k} := \tilde{f}(k) = \sum_{j=1}^K \tilde{a}_j\, \tilde{z}_{j}^{k} 
\end{equation}
with $\tilde{a}_{j} \in {\mathbb C}$ and $\tilde{z}_{j} \in \mathbb D$ such that $K < N$ and
 $\| {\mathbf f}-\tilde{{\mathbf  f}} \|_{p} \le \epsilon$.
We will focus on the two cases $p=1$ and $p=2$.
 \\
 
More precisely, we have to consider the following two questions. For a given accuracy level $\epsilon >0$, what is the smallest $K \in {\N}$
such that $\tilde{\mathbf f}$ in (\ref{ff}) satisfies $\| {\mathbf f}- \tilde{{\mathbf  f}} \|_{{p}} \le \epsilon$, and how to compute $\tilde{z}_{j} \in {\D}$ and $\tilde{a}_{j} \in \C$, $j=1, \ldots , K$?
Vice versa, for a given ``storage budget'', i.e., a given $K \in {\N}$, how  do we have to choose the parameters in (\ref{ff}) in order to achieve the smallest possible error $\| {\mathbf f}-  \tilde{{\mathbf f}} \|_{{p}}$?
\medskip

In this paper, we will approach this problem using the theory of Adamjan, Arov and Krein (AAK-theory) \cite{AAK}.
Consider the infinite Hankel matrix generated by ${\mathbf f}= (f_{k})_{k=0}^{\infty}$ of the form $\G_{\mathbf f} = (f_{j+k})_{j,k=0}^{\infty}$ being bounded on $\ell^{p}$, and let $\sigma_{0} \ge \sigma_{1} \ge \ldots $ denote its singular values, ordered by size and repeated according to multiplicities. Then the AAK-theory states that $\G_{\mathbf f}$ can be approximated by an infinite Hankel matrix $\G_{\tilde{\mathbf f}}$ of finite  $K$ such that 
$$ \| \G_{\mathbf f} - \G_{\tilde{\mathbf f}} \|_{\ell^{p} \to \ell^{p}} = \min_{{\rank}\G_{\mathbf g} \le K} \| \G_{\mathbf f} - \G_{{\mathbf g}} \|_{\ell^{p} \to \ell^{p}} = \sigma_{K}. $$
This result ist non-trival since a usual spectral decomposition of $\G_{\mathbf f}$ does not preserve the Hankel structure. Thus the result is strongly related to the problem of structured low rank approximation for Hankel matrices, see e.g. \cite{Mark}.
As presented in \cite{AAK}, the problem is equivalent to showing that  for an arbitrary  bounded function $f \in L^{\infty}([0, 2\pi))$, the best approximation by a function $\tilde{f}$ from the subspace $H^{\infty, [K]}$ $(K \in {\N})$ of the Hardy space $H^{\infty}$ exists and satisfies 
$$ \| f - \tilde{f} \|_{\infty}  = \min_{g \in H^{\infty, [K]}} \| f - g 
\|_{\infty} = \sigma_{K}(\G_{\mathbf f}), $$
where $\G_{\mathbf f}$ is generated by ${\mathbf f} = (f(k))_{k=0}^{\infty}$. Here $H^{\infty, [K]}$ denotes the space of functions in $H^{\infty}$ whose
extension to the the unit disc possess at most $K$ poles in $\D$. For more details we refer e.g.\ to \cite{AAK, Meinguet, Young, Chui, Nikol, Peller, Martinez}.
The proofs of these assertions  are substantially based on analysis of bounded functions in Hardy spaces  and operator theory, employing the 
Nehari's Theorem \cite{Nehari}, Kronecker's Theorem (see e.g. \cite{Young}, Theorem 16.3) as well as Beurling's Theorem \cite{Beurling, Chui}.
\medskip

For earlier approaches  to the application of the AAK theory in order to solve sparse approximation  problems using exponential sums we refer to 
 \cite{BM05} and \cite{anderson}. 
In \cite{BM05}, a finite-dimensional  approximation problem  is considered using $2N+1$ equidistant samples of a continuous function $f$. The goal is to approximate $f$ by an exponential sum as accurate as possible with $K < N$ terms. Using $N \times N$ Hankel matrices $H_{N}$, the authors derived 
a reduction procedure such that  the spectral norm of the difference  between $H_{N}$ and   the obtained rank $K$ Hankel approximation  $\tilde{H}_{N}$  is approximately $\sigma_{K}$, where $\sigma_{K}$ is the $K$-th singular value of $H_{N}$. 
In \cite{anderson}, relations between the AAK-theory and related discrete and continuous approximation problems on ${\R}^{+}$ and on the interval have been studied. However, the connection between AAK-theory  and corresponding  finite-dimensional approximation problems is still not completely understood.
\medskip

{\bf Contribution of our paper.}
In this paper, we present a new algorithm for solving the sparse approximation problem (\ref{ff}) and give an explicit procedure to compute the nodes $\tilde{z}_{j}$ as well as the coefficients $\tilde{a}_{j}$, $j=1, \ldots, K$ such that $\| {\mathbf f} - \tilde{\mathbf f} \|_{{p}}  \le \sigma_{K}$
is satisfied, where $\sigma_{K}$ denotes the $K$-th singular value of $\G_{\mathbf f}$. The procedure also includes an explicit computation of all non-zero singular values of $\G_{\mathbf f}$.
For this purpose, we investigate the structure of con-eigenvectors of the infinite Hankel matrix $\G_{\mathbf f}$  with finite rank $N$  and reduce the problem of characterizing the nonzero  singular values  of $\G_{\mathbf f}$  to the problem of finding the singular values of an $N \times N$
kernel matrix.
Further, we provide a new proof of the AAK Theorem in our context using only concepts from linear algebra and Fourier analysis.
We show how these results intimately relate to the Prony method for recovering exponential sums.

The numerical application of our approach usually employs in the first step a stabilized Prony method, like APM \cite{APM} to recover the parameters of ${\mathbf f}$ in (\ref{eq:expsum}). In a second step, the new reduction method is used  to compute all parameters of $\tilde{\mathbf f}$. For a fixed target error $\epsilon >0$ we choose $K$ as the smallest index  such that $\sigma_{K} \le \epsilon$. 
\medskip

This paper is organized as follows.
In Section 2 we summarize some basic notations and properties of infinite Hankel matrices.
Section 3 is devoted to the derivation of the new algorithm for sparse approximation by short exponential sums.
In Section 4.1 we provide some important properties of infinite Toeplitz and Hankel matrices.
The assertions of the AAK theory, stated already in Section 3, are proven in Subsection 4.2. Moreover, we give more insights
into the structure of singular values, (con)-eigenvectors as well as the kernel of finite rank Hankel matrices and its rank $K$ Hankel approximations. These insights provide the close relation of the theory to Prony's method.
Finally, in Section 5 we present some numerical examples  and applications of the new sparse approximation algorithm
and comment on stability issues.


\section{Preliminaries}
\label{preliminaries} 
\setcounter{equation}{0}

In the following we denote by $\ell^p:=\ell^p(\N_{0})$ for $p=1,2$ the space of $p$-summable sequences $ {\mathbf v}=(v_k)_{k=0}^\infty$ with the norm $\|v\|_p:=\left(\sum_{k=0}^\infty |v_p|^p\right)^{1/p}$ and by $\D$ the open unit disc without zero $\{z\in\C:0<|z|<1\}$. For a sequence ${\mathbf  v} = (v_{k})_{k=0}^{\infty} \in \ell^{p}$ and $z \in {\D}$
we call 
$$ P_{\bf  v}(z) := \sum_{k=0}^{\infty} v_{k} z^{k}$$
its corresponding Laurent polynomial and $P_{\bf v}(e^{i \omega})$, $\omega \in {\R}$, its Fourier series. Further, for ${\bf f}\in\ell^1$
we define the infinite Hankel matrix
\begin{equation}\label{gamma}
	{\mathbf \Gamma}_{\mathbf f}:=\left(
	\begin{array}{cccc}
	f_0 & f_1 & f_2 & \cdots \\
	f_1 &  f_2 & f_3 & \cdots \\
	f_2 &  f_3 & f_4 & \cdots \\
	\vdots & \vdots & \vdots & \ddots \\
	\end{array}
	\right) = \left(f_{k+j}\right)_{k,j=0}^\infty.
\end{equation}

Then $\G_{\mathbf f}$ determines an operator $\G_{\mathbf f}:\ell^p \to \ell^p$ given by
$$
	\G_{\mathbf f} \, {\mathbf v}  = \left( \sum_{j=0}^\infty f_{k+j}v_j \right)_{k=0}^\infty \quad \textnormal{for} ~ {\mathbf v}:=(v_k)_{k=0}^\infty \in \ell^p.
$$
This is a direct consequence of Youngs inequality, since $\G_{\mathbf f} \, {\mathbf v}$ can be easily reinterpreted as a convolution by extending the sequence spaces to $\ell^{p}({\Z})$.

\noindent
{\bf Con-diagonalization of infinite Hankel matrices}. Observe that $\G_{\mathbf f}$ is symmetric. Generalizing the idea of unitary diagonalization of Hermitian matrices resp. compact selfadjoint operators, we will apply the concept of con-similarity and con-diagonalization, see e.g. \cite{HJ} for the finite-dimensional case.

For an infinite Hankel matrix $\G_{\mathbf f}$ we call $\lambda\in\mathbb{C}$ a {\it con-eigenvalue} with the corresponding {\it con-eigenvector} $ {\mathbf v} \in \ell^p$ if it satisfies
$$
\G_{\mathbf f} \overline{\mathbf v} = \lambda \, {\mathbf v}.
$$
Observe that for $\G_{\mathbf f} \overline{\mathbf v} = \lambda \, {\mathbf v}$ also
$$ \G_{\mathbf f} ( \overline{{\mathrm e}^{{\mathrm i} \alpha} {\mathbf v}}) = {\mathrm e}^{-{\mathrm i} \alpha } \G_{\mathbf f} \overline{\mathbf v} =  ({\mathrm e}^{-{\mathrm i} \alpha} \lambda) {\mathbf v} = ({\mathrm e}^{-2{\mathrm i} \alpha} \lambda) ({\mathrm e}^{{\mathrm i} \alpha} {\mathbf v}) $$
for all $\alpha \in {\R}$. Thus, for each con-eigenvalue  $\lambda$ of $\G_{\mathbf f}$  we can find a corresponding real non-negative con-eigenvalue $\sigma = |\lambda|$ by this rotation trick.
In the following, we will restrict  the con-eigenvalues to their unique nonnegative representatives.
Now, it can be simply observed that a symmetric infinite Hankel matrix  $\G_{\mathbf f}$ with finite rank is compact and unitarily con-diagonalizable, see \cite{HJ}. Since $\G_{\mathbf f} \overline{\mathbf v} = \lambda \, {\mathbf v}$ implies 
$$ ( \G_{\mathbf f}^{*} \G_{\mathbf f}) {\mathbf v} = \G_{\mathbf f}^{*} \lambda \overline{\mathbf v} = \lambda \overline{\G}_{\mathbf f} \overline{\mathbf v} = \lambda \overline{ \G_{\mathbf f} {\mathbf v}} = | \lambda|^{2} {\mathbf v},$$
we directly observe that the nonnegative con-eigenvalues and con-eigenvectors of $\G_{\mathbf f}$ are also singular values and corresponding  singular vectors of $\G_{\mathbf f}$. Conversely, for symmetric matrices a singular pair $(\sigma, {\mathbf v})$ is also a con-eigenpair of $\G_{\mathbf f}$ if the geometric multiplicity of $\sigma$ is $1$.\\


\noindent
{\bf Infinite Hankel matrices of finite rank.}
In the following, we consider special sequences ${\mathbf f}= (f_{k})_{k=0}^{\infty}$. 	
Let ${\textbf f}$ be of the form (\ref{eq:expsum})
where $N \in {\N}$, $a_{j} \in {\C}\neq \{ 0 \}$ and with pairwise different nodes $0 < |z_{N}| \le \ldots \le |z_{1}| < 1$. Then ${\mathbf f} \in \ell^{1}$ since
$$
 \| {\mathbf f} \|_{1}=  \sum_{k=0}^{\infty} | f_{k}| = \sum_{k=0}^{\infty} \left| \sum_{j=1}^{N} a_{j} z_{j}^{k} \right| 
 \le  \sum_{j=1}^{N} \left( \sum_{k=0}^{\infty} |a_{j} z_{j}^{k} | \right) = \sum_{j=1}^{N} \frac{|a_{j}|}{1-|z_{j}|} < \infty. 
$$
First, we recall the following property of the corresponding infinite Hankel matrix $\G_{\mathbf f}$, see e.g. \cite{Young}, Theorem 16.13.

\begin{theorem} \label{Kronecker} {\rm (Kronecker's Theorem).}
The Hankel operator $\G_{\mathbf f}: \ell^{p} \to \ell^{p}$ generated by ${\mathbf f} = (f_{k})_{k=0}^{\infty} \in \ell^{1}$  of the form {\rm (\ref{eq:expsum})} has finite rank $N$.
\end{theorem}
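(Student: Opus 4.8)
The plan is to exhibit a rank factorization of $\G_{\mathbf f}$ that reflects the exponential-sum structure. Writing $f_{k+j} = \sum_{l=1}^{N} a_{l}\, z_{l}^{k}\, z_{l}^{j}$, I would introduce the infinite Vandermonde matrix $V = (z_{l}^{k})_{k=0,\, l=1}^{\infty,\ N}$ (an $\infty \times N$ array whose $l$-th column is the geometric sequence $(z_{l}^{k})_{k=0}^{\infty}$) together with the diagonal matrix $D = \diag(a_{1}, \ldots, a_{N})$. A direct entrywise check then gives the factorization $\G_{\mathbf f} = V D \transpose{V}$, since $(V D \transpose{V})_{k,j} = \sum_{l=1}^{N} z_{l}^{k}\, a_{l}\, z_{l}^{j} = f_{k+j}$. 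Each column of $V$ lies in $\ell^{1} \subset \ell^{p}$ because $|z_{l}| < 1$, so this is a genuine factorization of bounded operators, consistent with the boundedness already established via Young's inequality.

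For the upper bound I would observe that the image of $\G_{\mathbf f} = V D \transpose{V}$ is contained in the column space of $V$, that is, in $\Span\{(z_{l}^{k})_{k=0}^{\infty} : l = 1, \ldots, N\}$, a subspace of dimension at most $N$. Hence $\rank \G_{\mathbf f} \le N$.

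For the matching lower bound I would restrict attention to the top-left $N \times N$ block of $\G_{\mathbf f}$. It factors as $V_{N} D \transpose{V_{N}}$, where $V_{N} = (z_{l}^{k})_{k=0,\,l=1}^{N-1,\ N}$ is the finite Vandermonde matrix. Since the nodes $z_{l}$ are pairwise distinct, $\det V_{N} = \prod_{1 \le i < j \le N}(z_{j} - z_{i}) \neq 0$, and since $a_{l} \neq 0$ the matrix $D$ is invertible; therefore the block $V_{N} D \transpose{V_{N}}$ is nonsingular and has rank $N$. As an $N \times N$ submatrix of $\G_{\mathbf f}$ of full rank, it forces $\rank \G_{\mathbf f} \ge N$, and combining the two bounds yields $\rank \G_{\mathbf f} = N$.

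I expect the only real subtlety to be the lower bound: one must verify that the $N$ geometric columns of $V$ are genuinely linearly independent in the infinite-dimensional setting, which is precisely what the nonvanishing Vandermonde determinant of the leading block certifies. The factorization and the upper bound are routine bookkeeping, and it is worth noting that the distinctness of the nodes and the nonvanishing of the coefficients are exactly the two hypotheses consumed in the rank-$N$ conclusion.
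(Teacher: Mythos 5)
Your proof is correct, and it takes a genuinely different route from the paper's. The paper argues via the Prony polynomial $P(z) = \prod_{j=1}^{N}(z - z_{j}) = \sum_{k=0}^{N} b_{k} z^{k}$: the identity $\sum_{l=0}^{N} b_{l} f_{k+l} = \sum_{j=1}^{N} a_{j} z_{j}^{k} P(z_{j}) = 0$ shows that every column of $\G_{\mathbf f}$ from the $(N+1)$-st onward is a linear combination of the $N$ preceding ones, giving $\rank \G_{\mathbf f} \le N$, and equality is then asserted from the fact that $P$ has exact degree $N$. You instead exhibit the rank factorization $\G_{\mathbf f} = V D \transpose{V}$, obtaining the upper bound from the column space of $V$ and the lower bound from the nonsingularity of the leading $N \times N$ block $V_{N} D \transpose{V_{N}}$. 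Each approach buys something: the paper's recurrence argument is precisely what feeds into Section 3, where the kernel vector of the leading block $H_{N+1}$ is identified with the Prony coefficient vector, so it makes the connection to difference equations and Prony's method explicit. Your factorization, by contrast, makes the lower bound completely explicit --- the paper's one-line justification (``since $P(z)$ has exact degree $N$'') is terse, and your nonvanishing Vandermonde determinant combined with $a_{l} \neq 0$ is exactly the substantiation it needs. Your decomposition also foreshadows the structure behind Theorem \ref{reduction}: summing the geometric columns gives $\sum_{k=0}^{\infty} z_{i}^{k} \overline{z}_{j}^{k} = 1/(1 - z_{i}\overline{z}_{j})$, which is how the kernel matrix ${\mathbf A}_{N}{\mathbf Z}_{N}$ there arises, so the two proofs are complementary rather than merely alternative.
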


\begin{proof}
For reader's convenience we provide a short proof that also gives some insight into the connection to difference equations.
If ${\mathbf f}$ can be written in the form (\ref{eq:expsum}), we define the characteristic polynomial  (Prony polynomial)
\begin{equation} \label{prony} 
P(z):= \prod_{j=1}^{N} (z-z_{j}) = \sum_{k=0}^{N} b_{k} z^{k}. 
\end{equation}
Then 
\begin{equation}\label{dif} \sum_{l=0}^{N} b_{l} f_{k+l} = \sum_{l=0}^{N} b_{l} \sum_{j=1}^{N} a_{j} z_{j}^{k+l} = \sum_{j=1}^{N} a_{j} z_{j}^{k} (\sum_{l=0}^{N} p_{l} z_{j}{l}) =0
\end{equation}
for all $k \in \N_{0}$, i.e., the $(N+k)$-th column  of $\G_{\mathbf f}$ is a linear combination of the $N$ preceding columns. Thus $\rank~\G_{\mathbf f} \le N$.
Since $P(z)$ has exact degree $N$ it follows that $ \rank~\G_{\mathbf f} = N$.
\end{proof}

\begin{remark}
Conversely, if the infinite Hankel matrix  $\G_{\mathbf f}$ possesses rank $N$ then ${\mathbf f}$ satisfies a difference equation of order $N$.
Thus, there exist coefficients $b_{0}, \ldots , b_{N}$ such that 
$$ \sum_{l=0}^{N} b_{l} f_{k+l} =0 \quad \forall k \in \N_{0}. $$
Assuming that  the zeros $z_{j}$, $j=1, \ldots , N$ of the characteristic polynomial $\sum_{l=0}^{N} b_{l} z^{l}$ are pairwise different, ${\mathbf f}$ can  be written in the form (\ref{eq:expsum}). The zeros have modulus smaller than $1$ since ${\mathbf f}$ has been assumed to be in $\ell^{1}$.
\end{remark}


\section{Algorithm for sparse approximation of exponential sums}
\label{sec2} 
\setcounter{equation}{0}

Let us assume now that  the signal ${\mathbf f}=(f_{k})_{k=0}^{\infty}$ is of the form (\ref{eq:expsum}) with pairwise different nodes $z_{j}$. Further, let a 
suitable number of it's samples $f_{k}$, $k =0, 1, \ldots , M$ with $M\ge 2N-1$ be given. 
 Our approach for solving the sparse approximation problem (2) consists of two steps. 
 In the first step we reconstruct the parameters $a_{j}$ and $z_{j}$ using Prony's method, see e.g. \cite{Roy,APM,PT14}. Once the representation (\ref{eq:expsum}) of ${\mathbf f}$ is known, in the second step we apply the AAK-theory \cite{AAK}  to compute a new signal $\tilde{\mathbf f}$ that can be represented by shorter  exponential sum (\ref{ff}) with $K < N$ and satisfies $\| {\mathbf f} - \tilde{\mathbf f} \|_{{p}} < \epsilon$.
\medskip

\noindent
{\bf Classical Prony method.} Let us shortly summarize  Prony's method  for the recovery of exponential sums. As seen already  in the proof of Theorem \ref{Kronecker}, a signal ${\mathbf f}$ of the form (\ref{eq:expsum}) satisfies a homogeneous difference equation  (\ref{dif}) of order $N$ with constant coefficients $b_l$ being determined by the coefficients of the Prony polynomial $P(z)$ in (\ref{prony}). For given samples $f_{k}$, $k=0, \ldots, 2N$, (\ref{dif}) leads to the homogeneous system of equations 
$$ \sum_{l=0}^{N} b_{l} f_{l+k} = 0, \qquad k=0, \ldots , N, $$
whose coefficient matrix $H_{N+1} \in {\C}^{(N+1) \times (N+1)}$ is the leading principal minor of $\G_{\mathbf f}$ with $\rank ~H_{N+1} = N$ by Theorem \ref{Kronecker}. Recalling that the leading coefficient of the Prony polynomial satisfies $b_{N}=1$,
there exists a unique solution of the homogeneous equation system, namely the eigenvector ${\mathbf b} = (b_{l})_{l=0}^{N}$ of $H_{N+1}$ to the single eigenvalue $0$ normalized by $b_{N}=1$. This observation leads to the following naive algorithm to recover the parameters $z_{j}, \, a_{j}$ in (\ref{eq:expsum}) by the signal samples $f_{k}$, $k=0, \ldots , 2N$.
	\begin{enumerate}
		\item Compute the eigenvector ${\mathbf b}$ of $H_{N+1}$ with $b_{N}=1$ corresponding to the zero-eigenvalue.
		\item Determine the Prony polynomial $P(z)$ in (\ref{prony}) with coefficient vector ${\mathbf b}$ and compute its $N$ zeros $z_j,~j=1,\ldots,N$.
		\item Solve the overdetermined linear system (\ref{eq:expsum}) with $k=0, \ldots , 2N$ to determine the weights $a_j,~j=1,\ldots,N$.
	\end{enumerate}
	
\begin{remarks}
1. We observe that the zero-eigenvector of $H_{N+1}$ also produces a zero-(con)-eigenvector of the infinite Hankel matrix $\G_{\mathbf f}$ by taking ${\mathbf v}^{(N)} = (v_{l}^{(N)})_{l=0}^{\infty}$ with $v_{l}^{(N)}= b_{l}$ for $l=0, \ldots , N$ and $b_{l}=0$ for $l>N$. Then the Laurent polynomial corresponding to ${\mathbf v}^{(N)}$ given by
$P_{{\mathbf v}^{(N)}}(z):=\sum_{l=0}^{\infty} v_{l}^{(N)} z^{l}$ equals to the Prony polynomial $P(z)$ in (\ref{prony}).

\noindent
2. Numerically stable algorithms for Prony's method usually employ more than $2N$ samples to achieve stability and use e.g. singular value decompositions of (rectangular) Hankel matrices as well as matrix pencil methods for evaluating  the nodes $z_{j}$, $j=1, \ldots , N$. In our numerical tests, we particularly use the APM method for stable evaluation of the parameters, see \cite{APM}.

\end{remarks}	
\medskip	

\noindent
{\bf Sparse approximation based on  AAK-theory.} 
In order to compute an optimal approximation $\tilde{\mathbf f}$ of ${\mathbf f}$ using a shorter exponential sum, we want to apply the following theorem based on AAK theory, \cite{AAK}.

\begin{theorem}\label{AAK} 
Let the Hankel matrix ${\mathbf \Gamma}_{\mathbf f}$ of rank $N$ be generated by the the sequence ${\mathbf f}$ of the form {\rm (\ref{eq:expsum})}
with $1 > |z_{1}| \ge \cdots \ge |z_{N}| >0$. Let the $N$ nonzero singular values of ${\mathbf \Gamma}_{\mathbf f}$ be ordered by size  $\sigma_{0} \ge  \sigma_{1} \ldots \ge \sigma_{N-1} >0$.
Then, for each $K \in \{0, \ldots , N-1 \}$ satisfying $\sigma_K \neq \sigma_{k}$ for $K \neq k$  the Laurent polynomial of the corresponding con-eigenvector ${\mathbf v}^{(K)} = (v^{(K)}_{l})_{l=0}^{\infty}$,
$$
	P_{{\mathbf v}^{(K)}}(z) := \sum_{l=0}^\infty v^{(K)}_{l} \, z^l,
$$
has exactly ${K}$ zeros ${z}^{(K)}_1,\ldots,{z}^{(K)}_K$ in $\mathbb{D}$, repeated according to their multiplicity. 
Furthermore, if ${z}^{(K)}_1,\ldots,{z}^{(K)}_K$ are pairwise different, then there exist coefficients $\tilde{a}_1, \ldots, \tilde{a}_{K} \in \mathbb{C}$ such that for
\begin{equation}\label{eq:AAK1}
	\tilde{{\mathbf f}}^{(K)} = \left(\tilde{f}_l^{(K)} \right)_{l=0}^{\infty} = \left(\sum_{j=1}^{{K}} \tilde{a}_j (z^{(K)}_j)^l\right)_{l=0}^\infty
\end{equation}
we have for $p=1$ and $p=2$
$$
	\| {\mathbf f} - {\tilde{\mathbf f}^{(K)}} \|_{{p}} \le \sigma_{{K}}.
$$
\end{theorem}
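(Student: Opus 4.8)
The plan is to push the infinite-dimensional con-eigenproblem down to a finite one, to read off the approximant from the zeros of $P_{\mathbf v^{(K)}}$, and finally to deduce the $\ell^p$-bound from the operator norm of a Hankel difference by testing it on the first basis vector. The genuine AAK content sits in the exact zero-count; the rest is reduction.

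\textbf{Reduction to a finite kernel matrix.} Since $\sigma_K\neq 0$, the con-eigenequation $\G_{\mathbf f}\,\overline{\mathbf v^{(K)}}=\sigma_K\mathbf v^{(K)}$ forces $\mathbf v^{(K)}\in\im\G_{\mathbf f}=\Span\{(z_i^l)_{l\ge 0}:i=1,\dots,N\}$, because every column of $\G_{\mathbf f}$ is a combination of the geometric sequences $(z_i^l)_l$ (cf.\ the proof of Theorem~\ref{Kronecker}). Hence $v^{(K)}_l=\sum_{i=1}^N c_i z_i^l$ and
$$P_{\mathbf v^{(K)}}(z)=\sum_{i=1}^N\frac{c_i}{1-z_iz}=\frac{Q(z)}{\prod_{i=1}^N(1-z_iz)},\qquad \deg Q\le N-1.$$
As the poles $1/z_i$ lie outside $\overline{\D}$, the zeros of $P_{\mathbf v^{(K)}}$ in $\D$ are exactly the zeros of $Q$ in $\D$, at most $N-1$ of them. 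Inserting the ansatz and matching the linearly independent sequences $(z_i^l)_l$ converts the con-eigenequation into the finite con-eigenproblem $A\,C\,\overline{\mathbf c}=\sigma_K\mathbf c$ with $A=\diag(a_1,\dots,a_N)$ and the Cauchy-type kernel matrix $C=\big(1/(1-z_i\overline{z_j})\big)_{i,j=1}^N$, so the nonzero singular values of $\G_{\mathbf f}$ are precisely the con-eigenvalues of the $N\times N$ matrix $AC$.

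\textbf{Fourier functional equation.} Identifying the Hankel action with Fourier multiplication, the nonnegative Fourier coefficients of $P_{\mathbf f}(e^{\ii\omega})\,\overline{P_{\mathbf v^{(K)}}(e^{\ii\omega})}$ are exactly the entries of $\G_{\mathbf f}\,\overline{\mathbf v^{(K)}}$. Thus the con-eigenequation says that $P_{\mathbf f}\,\overline{P_{\mathbf v^{(K)}}}-\sigma_K P_{\mathbf v^{(K)}}$ carries only strictly negative frequencies, which on $\partial\D$ produces the unimodular relation $P_{\mathbf f}-\tilde F=\sigma_K\,\overline{P_{\mathbf v^{(K)}}}/P_{\mathbf v^{(K)}}$ for the generating function $\tilde F$ of the approximant, the quotient on the right having modulus one on the circle.

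\textbf{The hard part: the exact zero-count.} Counting zeros of $P_{\mathbf v^{(K)}}$ in $\D$ by the argument principle amounts to computing the winding number of $P_{\mathbf v^{(K)}}$ along $\partial\D$, and the difficulty is to show this equals the ordinal index $K$ rather than merely being bounded by $N-1$. I would prove this monotone behaviour from the finite problem $AC\,\overline{\mathbf c}=\sigma\mathbf c$: ordering the con-eigenvalues, the rational Laurent polynomials of the con-eigenvectors gain exactly one additional $\D$-zero as the index increases, the extreme cases being no $\D$-zero for $\sigma_0$ and, matching the Prony/kernel situation of the Remark after Theorem~\ref{Kronecker}, all $N$ zeros for the zero con-eigenvalue. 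The single-step increase would follow from an interlacing argument for the con-eigenvalues of $AC$ combined with the unimodular relation, which forces $\overline{P_{\mathbf v^{(K)}}}/P_{\mathbf v^{(K)}}$ to be a finite Blaschke-type quotient whose degree is the index. Granting the count, the $K$ zeros $z^{(K)}_1,\dots,z^{(K)}_K\in\D$ are exactly the nodes of the optimal rank-$K$ Hankel approximant; when they are distinct, Kronecker's Theorem~\ref{Kronecker} guarantees that the $K$-term exponential sum $\tilde{\mathbf f}^{(K)}$ in (\ref{eq:AAK1}) generates a Hankel matrix of rank exactly $K$.

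\textbf{From operator norm to sequence norm.} I would then invoke the AAK optimal-approximation property recalled in the introduction (and reproved by the unimodular relation above), namely $\|\G_{\mathbf f}-\G_{\tilde{\mathbf f}^{(K)}}\|_{\ell^p\to\ell^p}\le\sigma_K$ for $p=1,2$. The claimed estimate is immediate from the observation that $\mathbf f$ is the zeroth column of $\G_{\mathbf f}$: with $\mathbf e_0=(1,0,0,\dots)$ one has $\mathbf f-\tilde{\mathbf f}^{(K)}=(\G_{\mathbf f}-\G_{\tilde{\mathbf f}^{(K)}})\,\mathbf e_0$ and $\|\mathbf e_0\|_{p}=1$, so $\|\mathbf f-\tilde{\mathbf f}^{(K)}\|_{p}\le\|\G_{\mathbf f}-\G_{\tilde{\mathbf f}^{(K)}}\|_{\ell^p\to\ell^p}\le\sigma_K$. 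I expect the exact zero-count of the third step to be the sole real obstacle: the finite reduction is computational, and this last step is the clean first-column reduction that turns the Hankel-operator estimate into the desired sequence bound for both $p=1$ and $p=2$.
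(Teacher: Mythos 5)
Your outline matches the paper's architecture at its two ends: the reduction of the con-eigenproblem to the finite matrix ${\mathbf A}_{N}{\mathbf Z}_{N}$ is exactly Theorem \ref{reduction}, and your closing step (apply the Hankel difference to ${\mathbf e}_0$ and use $\|{\mathbf e}_0\|_p=1$) is verbatim the paper's final argument. But the heart of the theorem --- that $P_{{\mathbf v}^{(K)}}$ has \emph{exactly} $K$ zeros in $\D$, and that the resulting $\tilde{\mathbf f}^{(K)}$ really is a $K$-term exponential sum at those zeros generating a Hankel matrix with $\| \G_{\mathbf f}-\G_{\tilde{\mathbf f}^{(K)}}\|_{\ell^{p}\to\ell^{p}}=\sigma_K$ --- is precisely what you leave unproven. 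Your proposed mechanism (an ``interlacing argument'' for the con-eigenvalues of $AC$ forcing one additional $\D$-zero per index) is never developed, and it is not how the count is obtained; the paper needs two independent, substantial arguments. The upper bound $n_K\le K$ is Theorem \ref{aak2}: multiply $\G_{\mathbf f}$ by the Blaschke Toeplitz matrix ${\mathbf T}_{{\mathbf b}^{(K)}}$ built from the $\D$-zeros, exhibit $n_K+1$ linearly independent singular vectors of $\G_{\mathbf f}{\mathbf T}_{{\mathbf b}^{(K)}}$ to the value $\sigma_K$ (their independence itself needs Theorem \ref{zerovectors}), and combine with the domination $\sigma_n(\G_{\mathbf f}{\mathbf T}_{{\mathbf b}^{(K)}})\le\sigma_n(\G_{\mathbf f})$ of Lemma \ref{toeplitz}. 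The lower bound is Theorem \ref{theo}: the con-eigenvectors to the larger singular values cannot lie in $\Ker\,\G_{{\mathbf f}-{\mathbf g}^{(K)}}$, since $\|\G_{{\mathbf f}-{\mathbf g}^{(K)}}\overline{\mathbf v}^{(r)}\|_{2}\ge(\sigma_r-\sigma_K)\|{\mathbf v}^{(r)}\|_{2}>0$ for $r<K$, so that kernel has codimension at least $K$; combined with the Beurling-type identification ${\cal S}_{\overline{\mathbf v}^{(K)}}=({\rm clos}_{\ell^2}{\rm span}\{((z_j^{(K)})^l)_{l=0}^{\infty}\})^{\perp}$ --- proved in Theorem \ref{AAK1} by explicitly inverting the Toeplitz matrix ${\mathbf T}_{{\mathbf u}^{(K)}}$ --- this yields $n_K\ge K$ and, simultaneously, the exponential-sum form of $\tilde{\mathbf f}^{(K)}$, which you also assert (``are exactly the nodes of the optimal rank-$K$ Hankel approximant'') rather than prove.

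There is a second, structural problem: in your last step you ``invoke the AAK optimal-approximation property recalled in the introduction''. That property is the theorem being proved here; citing it is circular, and saying it is ``reproved by the unimodular relation above'' does not make it so. The operator-norm statement $\|\G_{{\mathbf g}^{(K)}}\|_{\ell^{p}\to\ell^{p}}=\sigma_K$ requires the explicit construction (\ref{g}) of ${\mathbf g}^{(K)}$ from the Fourier coefficients of the unimodular function $\sigma_K P_{{\mathbf v}^{(K)}}(\e^{\ii t})/P_{\overline{\mathbf v}^{(K)}}(\e^{-\ii t})$ together with the Parseval-based norm computation of Lemma \ref{lem2}, and then the rank argument above to conclude $\rank\G_{{\mathbf f}-{\mathbf g}^{(K)}}=K$. (Note also that your quotient is inverted relative to this error symbol, a minor slip.) In short: correct skeleton, correct finite reduction, correct final ${\mathbf e}_0$ trick, but the central zero-count and the kernel/rank analysis that make the theorem true are missing, and what you substitute for them would not close the gap.
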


We will give a proof of this theorem in Section 4, where we also provide more insights into the structure of infinite Hankel matrices with finite rank and its spectral properties. 

%

In order to apply this theorem to the sparse approximation problem (\ref{ff}) we need to find a numerical procedure to compute the singular pairs $(\sigma_{{n}}, {\bf v}^{(n)})$ of ${\bf \Gamma}_{\bf f}$ for $n=0, \ldots , N-1$ and to find all zeros of the expansion $P_{{\bf v}^{(n)}}(z)$ lying inside $\mathbb D$. In a final step we have to compute the optimal coefficients $\tilde{a}_j$. 
\smallskip



Investigating the special structure of the con-eigenvectors of $\G_{\mathbf f}$ corresponding to the non-zero con-eigenvalues (resp.\ singular values) we show the following result that provides us with an algorithm to compute all non-zero singular values of $\G_{\mathbf f}$ and the corresponding con-eigenvectors exactly.

\begin{theorem} \label{reduction}
Let ${ {\mathbf f}}$ be of the form {\rm (\ref{eq:expsum})}. Then the con-eigenvector ${\mathbf v}^{(l)} = (v_{k}^{(l)})_{k=0}^{\infty}$ of $\G_{\mathbf f}$ corresponding to a single nonzero singular value $\sigma_{l}$ of ${\bf \Gamma}_{\bf f}$, $l \in \{0, 1, \ldots , N-1\}$  is of the form 
\begin{equation}\label{eq:coneigenred}
  v_{k}^{(l)} = \frac{1}{\sigma_{l}} \sum_{j=1}^{N} a_{j} P_{\overline{\mathbf v}^{(l)}} (z_{j}) z_{j}^{k}, \qquad k\in {\mathbb N}_{0},
\end{equation}
where the vector $(P_{\overline{\mathbf v}^{(l)}} (z_{j}))_{j=1}^{N} = (\overline{P_{{\mathbf v}^{(l)}} (\overline{z}_{j})})_{j=1}^{N}$ is determined by the con-eigenvector of the finite con-eigenvalue problem
\begin{equation} \label{eig}
  {\mathbf A}_{N} {\mathbf Z}_{N} (\overline{P_{{\mathbf v}^{(l)}} (\overline{z}_{j})})_{j=1}^{N}
 = \sigma_{l} (P_{{\mathbf v}^{(l)}} (\overline{z}_{j}))_{j=1}^{N}\end{equation}
with 
$$ {\mathbf A}_{N} := \left(\begin{array}{cccc} a_1 & & & 0 \\ & a_2 & & \\ & & \ddots & \\ 0 & & & a_N \end{array}\right), \quad
	{\mathbf Z}_{N}:=\left(\begin{array}{cccc} 	\frac{1}{1-|z_1|^2} 		& \frac{1}{1-z_1\bar{z}_2} 	& \cdots 	& \frac{1}{1-z_1\bar{z}_N} \\
					
						\frac{1}{1-\bar{z}_1z_2} 	& \frac{1}{1-|z_2|^2} 		& \cdots	& \frac{1}{1-z_2\bar{z}_N}	\\
						\vdots				& \vdots				& \ddots 	& \vdots				\\
						\frac{1}{1-\bar{z}_1z_N}	& \frac{1}{1-\bar{z}_2z_N}	& \cdots	& \frac{1}{1-|z_N|^2}		\\
		\end{array}\right).
$$
\end{theorem}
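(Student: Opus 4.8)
The plan is to start from the defining con-eigenvector relation $\G_{\mathbf f}\,\overline{\mathbf v}^{(l)} = \sigma_l\,\mathbf v^{(l)}$ and to use the exponential structure of $\mathbf f$ to collapse this infinite identity into a finite one. First I would write out the $k$-th entry of $\G_{\mathbf f}\overline{\mathbf v}^{(l)}$, insert $f_{k+m}=\sum_{j=1}^N a_j z_j^{k+m}$, and interchange the finite sum over $j$ with the series over $m$. Because $|z_j|<1$ and $\mathbf v^{(l)}\in\ell^p$ has bounded entries, the series $\sum_{m\ge 0}\overline{v_m^{(l)}}\,z_j^m = P_{\overline{\mathbf v}^{(l)}}(z_j)$ converges absolutely; since the sum over $j$ is finite, interchanging it with this series is immediate and no delicate Fubini argument is needed. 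This gives $\sigma_l\,v_k^{(l)}=\sum_{j=1}^N a_j\,P_{\overline{\mathbf v}^{(l)}}(z_j)\,z_j^k$, and dividing by $\sigma_l\neq 0$ yields the claimed representation \eqref{eq:coneigenred}. In particular the con-eigenvector is itself an exponential sum in the \emph{same} nodes $z_j$, whose $N$ coefficients are exactly the scalars $P_{\overline{\mathbf v}^{(l)}}(z_j)$.

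The second step closes the system for these $N$ coefficients. I would substitute \eqref{eq:coneigenred} back into the definition $P_{\overline{\mathbf v}^{(l)}}(z_i)=\sum_{m\ge 0}\overline{v_m^{(l)}}\,z_i^m$, using $\overline{v_m^{(l)}}=\tfrac{1}{\sigma_l}\sum_{j}\overline{a_j}\,\overline{P_{\overline{\mathbf v}^{(l)}}(z_j)}\,\overline{z}_j^{\,m}$, and then sum the geometric series $\sum_{m\ge 0}(z_i\overline{z}_j)^m=(1-z_i\overline{z}_j)^{-1}$, which converges since $|z_i\overline{z}_j|<1$. Collecting terms produces a closed $N\times N$ relation whose coefficient matrix carries exactly the entries $(1-z_i\overline{z}_j)^{-1}$ of $\mathbf Z_N$ together with the diagonal weights of $\mathbf A_N$. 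Matching the conjugations then turns this identity into the finite con-eigenvalue problem \eqref{eig} for the vector $(P_{\overline{\mathbf v}^{(l)}}(z_j))_{j=1}^N=(\overline{P_{\mathbf v^{(l)}}(\overline z_j)})_{j=1}^N$ with the same con-eigenvalue $\sigma_l$. It is worth recording that $\mathbf Z_N$ is precisely the Gram matrix of the sequences $(z_j^k)_{k\ge 0}$ in $\ell^2$, hence Hermitian and positive definite; this explains its structure and, through $\overline{\mathbf Z_N}=\mathbf Z_N^{\mathrm T}$, is the identity one invokes when rearranging the conjugates.

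The computation is short, and its analytic content is limited to the absolute convergence of the series above, guaranteed by $|z_j|<1$ and the pairwise distinctness of the nodes (which also makes $\mathbf Z_N$ well defined). I expect the only genuine difficulty to be the bookkeeping of the complex conjugates: because \eqref{eig} is a con-eigenvalue rather than an ordinary eigenvalue equation, one must track carefully where $\overline{\mathbf A}_N$, $\mathbf Z_N^{\mathrm T}$ and the conjugated coefficient vector enter before the relation can be presented in the clean form of the statement, in particular the order in which the weights $a_j$ and the Cauchy kernels $(1-z_i\overline z_j)^{-1}$ attach. Two small points then remain. First, the finite vector is genuinely nonzero: if every $P_{\overline{\mathbf v}^{(l)}}(z_j)$ vanished, \eqref{eq:coneigenred} would force $\mathbf v^{(l)}=\mathbf 0$, contradicting that $\mathbf v^{(l)}$ belongs to the nonzero singular value $\sigma_l$. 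Second, the hypothesis that $\sigma_l$ is a \emph{simple} singular value is what makes $\mathbf v^{(l)}$, and hence the associated finite con-eigenvector, well defined up to the unimodular rotation discussed in Section~\ref{preliminaries}, which is the setting under which the reduction to \eqref{eig} is unambiguous.
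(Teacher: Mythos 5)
Your proposal is correct and takes essentially the same route as the paper: the derivation of (\ref{eq:coneigenred}) is identical, and your substitution of it back into $P_{\overline{\mathbf v}^{(l)}}(z_i)$ followed by summing the geometric series $\sum_{m\ge 0}(z_i\overline{z}_j)^m$ is precisely the complex conjugate of the paper's step of forming the rational representation (\ref{con}) and inserting $z=\overline{z}_r$. The conjugation/transpose bookkeeping you flag (whether one lands on ${\mathbf A}_N{\mathbf Z}_N$ or its transpose ${\mathbf Z}_N^{\mathrm T}{\mathbf A}_N$) is handled at the same brief level in the paper's own proof, so nothing essential is missing from yours.
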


\begin{proof}
Let $(\sigma_{l}, {\mathbf v}^{(l)})$ with $\sigma_{l} \neq 0$ be a con-eigenpair of $\G_{\mathbf f}$, i.e. $\G_{\mathbf f} \overline{\mathbf v}^{(l)} = \sigma \, {\mathbf v}^{(l)}$. With the notation 
 $P_{\overline{\mathbf v}^{(l)}}(z) :=
\sum_{k=0}^{\infty} \overline{v}^{(l)}_{k} z^{k}$ it follows by (\ref{eq:expsum}) that 
$$ \sigma_{l} v_{k}^{(l)} = (\G_{\mathbf f} \overline{\mathbf v}^{(l)})_{k} = \sum_{r=0}^{\infty} f_{k+r} \overline{v}^{(l)}_{r}
= \sum_{r=0}^{\infty} \sum_{j=1}^N a_j\, z_{j}^{k+r} \overline{v}^{(l)}_{r} = \sum_{j=1}^N a_j\,  P_{\overline{\mathbf v}^{(l)}} (z_{j}) \, z_{j}^{k}
$$
for all $k \in {\N}_{0}$, and hence (\ref{eq:coneigenred}) is true.
The relation (\ref{eig}) is now a consequence of (\ref{eq:coneigenred}) observing that 
\begin{equation}\label{con} \sigma_{l} P_{{\mathbf v}^{(l)}}(z) = \sigma_l\sum_{k=0}^{\infty} v_k^{(l)} z^k = \sum_{k=0}^{\infty} \sum_{j=1}^{N} a_{j}  P_{\overline{\mathbf v}^{(l)}} (z_{j}) z_{j}^{k} z^{k} = \sum_{j=1}^{N} \frac{a_{j} P_{\overline{\mathbf v}^{(l)}} (z_{j})}{1- z_{j}z} 
\end{equation}
for $z\in\mathbb{D}$ by inserting $z=\bar{z}_{r}$, $r=1, \ldots , N$.
\end{proof}
\smallskip

Equation (\ref{con}) also shows that  that $P_{{\mathbf v}^{(l)}}(z)$ is a rational function whose numerator is a polynomial of degree at most $N-1$. Thus, in order to find the zeros of $P_{{\mathbf v}^{(l)}}(z)$ we only need to compute the $N-1$ zeros of the numerator in this rational representation.
Note that a similar idea of dimension reduction has been used by Beylkin and Monz\'on in \cite{BM05}. But in contrast to the above approach, they considered the rank reduction of a finite Hankel matrix. We combine our observations  with Theorem \ref{AAK} and obtain the following procedure to compute the desired approximation $\tilde{\mathbf f}$ in the $\ell^2$ norm. 

\smallskip

\noindent
{\bf Algorithm for sparse approximation of exponential sums.}

\smallskip
\noindent
{\bf Input:} samples $f_{k}$, $k=0, \ldots, M$ for sufficiently large $M\ge 2N-1$,\\
\null \hspace{12mm} target approximation error $\epsilon$.
\noindent
\begin{enumerate}
\item Find the parameters  $z_{j} \in {\mathbb D}$ and $a_{j} \in {\C}\setminus \{ 0 \}$, $j=1, \ldots, N$ of the exponential representation of ${\mathbf f}$ in (\ref{eq:expsum}) using a Prony-like method.
\item Solve the con-eigenproblem for the matrix ${\mathbf A}_{N}{\mathbf Z}_{N}$ and determine the largest singular value $\sigma_{K}$ with $\sigma_{K} < \epsilon$.
\item Compute the $K$ zeros ${z}_{j}^{(K)} \in {\mathbb D}$, $j=1, \ldots , K$,  of the con-eigenpolynomial $P_{{\mathbf v}^{(K)}}(z)$ of $\G_{\mathbf f}$ using its rational representation (\ref{con}).
\item Compute the coefficients $\tilde{a}_{j}$ by solving the minimization problem 
$$ \min_{\tilde{a}_{1}, \ldots , \tilde{a}_{K}} \| {\mathbf f}-\tilde{\mathbf f} \|_{\ell^{2}}^{2} = \min_{\tilde{a}_{1}, \ldots , \tilde{a}_{K}} \sum_{k=0}^{\infty} |f_{k} - \sum_{j=1}^{K} \tilde{a}_{j} ({z}_{j}^{(K)})^{k}|^{2}.
$$
\end{enumerate}
{\bf Output:} sequence $\tilde{\mathbf f}$ of the form (\ref{ff}) such that $\| {\mathbf f} - \tilde{\mathbf {f}} \|_{\ell^{2}} \le \sigma_{n}< \epsilon$.

\smallskip

\begin{remarks} \null $~$

\noindent
1. 
Since ${\mathbf A}_{N} {\mathbf Z}_{N}$ is con-diagonalizable by Theorem \ref{reduction}, it follows from Theorem 4.6.6 in \cite{HJ} that $\overline{{\mathbf A}_{N}{\mathbf Z}_{N}}{\mathbf A}_{N}{\mathbf Z}_{N}$ has only real nonnegative eigenvalues $\lambda_{j}$. Now, if $(\lambda_{j}, {\mathbf w}^{(j)})$ is an eigenpair of $\overline{{\mathbf A}_{N}{\mathbf Z}_{N}}{\mathbf A}_{N}{\mathbf Z}_{N}$, then ${\mathbf v}^{(j)}:= {\mathbf A}_{N} {\mathbf Z}_{N} {\mathbf w}^{(j)} + \sigma_{j} \overline{\mathbf w}^{(j)}$ is a con-eigenvector of ${\mathbf A}_{N} {\mathbf Z}_{N}$ to the con-eigenvalue $\sigma_{j} = \sqrt{\lambda_{j}}$, since
\begin{eqnarray*}
 {\mathbf A}_{N} {\mathbf Z}_{N} \overline{\mathbf v}^{(j)} &=& {\mathbf A}_{N} {\mathbf Z}_{N} (\overline{\mathbf A}_{N} \overline{\mathbf Z}_{N} \overline{\mathbf w}^{(j)} + \sigma_{j} {\mathbf w}^{(j)}) \\
 &=& \overline{ \overline{{\mathbf A}_{N}{\mathbf Z}_{N}}{\mathbf A}_{N}{\mathbf Z}_{N} {\mathbf w}^{(j)}} + \sigma_{j} {\mathbf A}_{N} {\mathbf Z}_{N} {\mathbf w}^{(j)} \\
&=& \sigma_{j}^{2} \overline{\mathbf w}^{(j)} + \sigma_{j} {\mathbf A}_{N} {\mathbf Z}_{N} {\mathbf w}^{(j)} = \sigma_{j} {\mathbf v}^{(j)}.
\end{eqnarray*}
Thus, to solve the con-eigenvalue problem in step 2 of the algorithm, we have to consider a usual eigenvalue-decomposition of $\overline{{\mathbf A}_{N}{\mathbf Z}_{N}}{\mathbf A}_{N}{\mathbf Z}_{N}$.

\noindent
2. Observing that the components $f_{k}$ have the form (\ref{eq:expsum}), the $\ell^{2}$-minimization problem in step 4 of the algorithm breaks down to a least squares problem 
with complex coefficients of the form
$$ \min_{\tilde{a}_{1}, \ldots , \tilde{a}_{K}} \left( 2{\mathrm Re}  \sum_{\nu=1}^{N} \sum_{j=1}^{K} \frac{\overline{a}_{\nu} \tilde{a}_{j}}{1 - \overline{z}_{\nu} z_{j}^{(K)} } + \sum_{j=1}^{K} \sum_{j'=1}^{K} \frac{\tilde{a}_{j} \overline{\tilde{a}}_{j'}}{ 1 - z_{j}^{(K)} \overline{z_{j}^{(K)}}} \right).
$$
If we are interested to find an optimal sequence $\tilde{\bf f}$ in the $\ell^{1}$-norm instead of the $\ell^{2}$-norm, then we have to replace the minimization problem in step 4 accordingly. This problem can be reformulated as a linear program but its solution is more expensive than solving a least squares problem.

\noindent
3. For a short summary of this section we also refer to \cite{PP16}.
\end{remarks}

\section{The AAK theory revisited}
\label{sec3} 
\setcounter{equation}{0}

In order to prove Theorem \ref{AAK}, we need some more inside information on infinite Hankel and Toeplitz matrices. We aim at 
showing the needed assertion using only tools from linear algebra and Fourier analysis. We hope that these insights will help to derive an analogous result on structured low-rank approximation of finite Hankel matrices in the future. The original result as well as all
further presentations of the AAK theory that we are aware of require fundamental theorems in complex analysis for approximation of meromorphic functions, as the Nehari theorem and the Beurling theorem, see e.g. \cite{AAK,Young,Peller, Chui}.

\subsection{Basic properties of infinite Hankel and Toeplitz matrices}

Let us start with summarizing some important properties of special infinite Hankel and Toeplitz matrices.
As shown in Section \ref{preliminaries}, we consider a Hankel matrix ${\mathbf \Gamma}_{\mathbf f}$ as in (\ref{gamma}) generated by ${\mathbf f} \in \ell^{1}$ such that ${\mathbf \Gamma}_{\mathbf f}: \ell^{p} \to \ell^{p}$ is bounded for $p=1$ and $p=2$.

\medskip

\noindent
{\bf Characterization of the kernel of an infinite Hankel matrix.}
Let  $ {\mathbf v}:=(v_k)_{k=0}^\infty$ be a sequence in $\ell^p$ and $p\in \{ 1, 2 \}$. We define the \textit{(forward) shift operator} $\sh:\ell^p \to \ell^p$ by
$$
	\sh{\mathbf v}:=(0,v_0,v_1,v_2,\ldots)
$$
and the \textit{backward shift operator} $\sh^*:\ell^p \to \ell^p $ by
$$
	\sh^*{\mathbf v}:=(v_1,v_2,v_3,\ldots).
$$
The \textit{shift invariant subspace} of $\ell^p$ generated by the sequence $ {\mathbf v} \in \ell^2$ (resp.\  $ {\mathbf v} \in \ell^{1} \subset \ell^2$) is denoted by
$$
	{\cal S}_{\mathbf v} := {\rm  clos}_{\ell^{2}}{\rm span} \;  \{\sh^k {\mathbf v} : k \in {\N}_{0} \}.
$$
Note that the $k$-th row of ${\mathbf \Gamma}_{\mathbf f}$ is the backward shift $(\sh^{*})^k {\mathbf f}$ of the first row. Due to the symmetry, the same holds for the columns. Thus  for ${\mathbf v} \in \ell^{2}$ we have
\begin{equation}\label{s} {\mathbf \Gamma}_{\mathbf f} \sh  {\mathbf v} = \left( \sum_{j=1}^{\infty} f_{k+j} \, v_{j-1} \right)_{k=0}^{\infty} = \left( \sum_{j=0}^{\infty
} f_{k+1+j} v_{j} \right)_{k=0}^{\infty} =  \sh^{*} {\mathbf \Gamma}_{\mathbf f} {\mathbf v}. 
\end{equation}
This commutator relation determines the structure of a Hankel operator and can be even used as a formal definition of ${\mathbf \Gamma}_{\mathbf f}$, see \cite{AAK}.

The following lemma gives a useful characterization of the kernel of ${\mathbf \Gamma}_{\mathbf f}$.

\begin{lemma}\label{shiftinv}
Let ${\mathbf f} := (f_{k})_{k=0}^{\infty}$ be a sequence in $\ell^{1}$ and ${\mathbf \Gamma}_{\mathbf f}$ the corresponding infinite Hankel matrix as above. Then the following assertions hold.
\begin{itemize}
\item[\textnormal{(i)}] The kernel space $\Ker \,  ({\mathbf \Gamma}_{\mathbf f}) := \{ {\mathbf v} \in \ell^{2}: \, {\mathbf \Gamma}_{\mathbf f} {\mathbf v} = {\mathbf 0} \}$  is $\sh$-invariant, i.e., for ${\mathbf v} \in \Ker ({\mathbf \Gamma}_{\mathbf f})$ we have ${\cal S}_{\mathbf v} \subset  \Ker ({\mathbf \Gamma}_{\mathbf f})$.
\item[\textnormal{(ii)}] A vector ${\mathbf v} \in \ell^{2}$ is in ${\mathrm ker} \,  ({\mathbf \Gamma}_{\mathbf f})$ if and only if 
${\mathbf f} \in ({\cal S}_{\overline{\mathbf v}})^{\perp}$.
\end{itemize}
\end{lemma}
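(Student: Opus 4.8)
My plan for part (i) is to read off $\sh$-invariance directly from the commutator relation (\ref{s}), namely $\G_{\mathbf f} \sh {\mathbf v} = \sh^{*} \G_{\mathbf f} {\mathbf v}$ for every ${\mathbf v} \in \ell^{2}$. If ${\mathbf v} \in \Ker(\G_{\mathbf f})$, applying this relation to ${\mathbf v}$ gives $\G_{\mathbf f} \sh {\mathbf v} = \sh^{*} {\mathbf 0} = {\mathbf 0}$, so that $\sh {\mathbf v}$ again lies in the kernel. I would then iterate to conclude $\sh^{k} {\mathbf v} \in \Ker(\G_{\mathbf f})$ for all $k \in {\N}_{0}$. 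Finally, since ${\mathbf f} \in \ell^{1}$ makes $\G_{\mathbf f}$ bounded on $\ell^{2}$ (via Young's inequality, as noted in Section \ref{preliminaries}), its kernel is a closed subspace of $\ell^{2}$ and therefore contains the closed linear span of $\{\sh^{k} {\mathbf v} : k \in {\N}_{0}\}$, which is precisely ${\cal S}_{\mathbf v}$.

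For part (ii), the key idea is to express each coordinate of $\G_{\mathbf f} {\mathbf v}$ as an $\ell^{2}$ inner product of ${\mathbf f}$ against a forward-shifted copy of $\overline{\mathbf v}$. Writing $\ip{\mathbf a}{\mathbf b} = \sum_{n=0}^{\infty} a_{n} \overline{b_{n}}$ and using $(\sh^{k} \overline{\mathbf v})_{n} = \overline{v}_{n-k}$ for $n \ge k$ (and $0$ otherwise), I would verify the identity
$$ \ip{\mathbf f}{\sh^{k} \overline{\mathbf v}} = \sum_{j=0}^{\infty} f_{k+j}\, v_{j} = (\G_{\mathbf f} {\mathbf v})_{k} \qquad (k \in {\N}_{0}), $$
with absolute convergence guaranteed by ${\mathbf f} \in \ell^{1}$ and ${\mathbf v} \in \ell^{2} \subset \ell^{\infty}$. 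The equivalence then follows at once: $\G_{\mathbf f} {\mathbf v} = {\mathbf 0}$ holds if and only if ${\mathbf f} \perp \sh^{k} \overline{\mathbf v}$ for every $k$, and since a vector is orthogonal to a set exactly when it is orthogonal to the closed linear span of that set, this is equivalent to ${\mathbf f} \in ({\cal S}_{\overline{\mathbf v}})^{\perp}$.

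Both parts are short, and I expect no deep obstacle. The one point demanding genuine care is the conjugate bookkeeping in part (ii): because ${\cal S}_{\overline{\mathbf v}}$ is generated by $\overline{\mathbf v}$ rather than ${\mathbf v}$, the conjugation built into the inner product cancels the conjugation on the sequence, which is exactly what makes the raw products $f_{k+j} v_{j}$ reappear and explains why the kernel condition links to orthogonality against ${\cal S}_{\overline{\mathbf v}}$ and not ${\cal S}_{\mathbf v}$. Beyond that, both parts rest on the routine Hilbert-space facts that $\Ker(\G_{\mathbf f})$ is closed (boundedness of $\G_{\mathbf f}$ on $\ell^{2}$) and that $(\Span X)^{\perp} = (\mathrm{clos}\,\Span\,X)^{\perp}$.
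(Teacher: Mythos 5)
Your proposal is correct and follows essentially the same route as the paper: part (i) reads off $\sh$-invariance from the commutator relation (\ref{s}), and part (ii) rewrites the coordinates of $\G_{\mathbf f}{\mathbf v}$ as inner products $\langle {\mathbf f}, \sh^{k}\overline{\mathbf v}\rangle_{\ell^{2}}$ and passes to the closed span. If anything, you are slightly more careful than the paper in spelling out the iteration of the shift, the closedness of the kernel, and the absolute-convergence justification, but the underlying argument is identical.
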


\begin{proof}$~$
1. Let $ {\mathbf v} \in\Ker ({\mathbf \Gamma}_{\mathbf f})$. Then (\ref{s}) implies
$$
{\mathbf \Gamma}_{\mathbf f} \sh {\mathbf v} = \sh^*{\mathbf \Gamma}_{\mathbf f} {\mathbf v} = \sh^*{\mathbf 0} = {\mathbf 0},
$$
thus $\sh {\mathbf  v}$ is also in $\Ker ({\mathbf \Gamma}_{\mathbf f})$.

2. Using the definition of ${\cal S}_{\mathbf  v}$ we obtain
$$
\begin{aligned}
{\mathbf \Gamma}_{\mathbf f} {\mathbf v} = 0 &~\Leftrightarrow~ \sum_{k=0}^\infty f_{k+j}\, v_k = 0 & \forall~j \in {\N}_{0}\\
			 &~\Leftrightarrow~ \sum_{k=0}^\infty (S^j {\mathbf v})_k {f_{k}} = 0 & \forall~j \in {\N}_{0}\\
			 &~\Leftrightarrow~ \langle {\mathbf f} , \sh^j \bar{\mathbf v} \rangle_{\ell^{2}} = 0 & \forall~j \in {\N}_{0}\\
			 &~\Leftrightarrow~~{\mathbf f}\perp {\cal S}_{\overline{\mathbf v}}
\end{aligned}
$$
for every $ {\mathbf v}\in \ell^2$.
\end{proof}

Let us now come back to the sequence ${\mathbf f}$ of the special form (\ref{eq:expsum}) with $z_{j} \in {\D}$.
Then the structure of (con)-eigenvectors corresponding to the zero-con-eigenvalues of ${\mathbf \Gamma}_{\mathbf f}$ can be described as follows. 

\begin{theorem} \label{zerovectors}
Let ${\mathbf f}$ be a vector of the form {\rm (\ref{eq:expsum})}.
Then, ${\mathbf v} \in \ell^{2}$ satisfies ${\mathbf \Gamma}_{\mathbf f} \overline{\mathbf v} = {\mathbf 0}$ if and only if the corresponding Laurent polynomial satisfies $P_{\overline{\mathbf v}}(z_{j})=0$, for $j=1, \ldots , N$, where the $z_{j}$ are given in {\rm (\ref{eq:expsum})}.
\end{theorem}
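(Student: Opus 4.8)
The plan is to compute the sequence $\G_{\mathbf f} \overline{\mathbf v}$ explicitly and to reduce the kernel condition to a linear-independence statement for geometric sequences. First I would check that for ${\mathbf v} \in \ell^{2}$ the series $P_{\overline{\mathbf v}}(z_{j}) = \sum_{r=0}^{\infty} \overline{v}_{r} z_{j}^{r}$ converges absolutely: since $|z_{j}|<1$ we have $(z_{j}^{r})_{r} \in \ell^{2}$, so the Cauchy--Schwarz inequality applies. This absolute convergence is what justifies interchanging the two summations in
$$ (\G_{\mathbf f} \overline{\mathbf v})_{k} = \sum_{r=0}^{\infty} f_{k+r}\, \overline{v}_{r} = \sum_{r=0}^{\infty} \sum_{j=1}^{N} a_{j} z_{j}^{k+r}\, \overline{v}_{r} = \sum_{j=1}^{N} a_{j} \Big( \sum_{r=0}^{\infty} \overline{v}_{r} z_{j}^{r} \Big) z_{j}^{k} = \sum_{j=1}^{N} a_{j}\, P_{\overline{\mathbf v}}(z_{j})\, z_{j}^{k}, $$
which is exactly the identity already established in the proof of Theorem \ref{reduction} (there written for a con-eigenvector, but the computation is valid for any ${\mathbf v} \in \ell^{2}$).

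Second, I would read off from this that $\G_{\mathbf f} \overline{\mathbf v} = {\mathbf 0}$ holds if and only if $\sum_{j=1}^{N} a_{j} P_{\overline{\mathbf v}}(z_{j})\, z_{j}^{k} = 0$ for all $k \in {\N}_{0}$; that is, if and only if the exponential sum with coefficients $c_{j} := a_{j} P_{\overline{\mathbf v}}(z_{j})$ and nodes $z_{j}$ vanishes identically on ${\N}_{0}$. Because the $z_{j}$ are pairwise distinct, the geometric sequences $(z_{j}^{k})_{k=0}^{\infty}$, $j=1,\ldots,N$, are linearly independent: restricting to $k = 0, \ldots, N-1$ produces a square Vandermonde system whose determinant $\prod_{1 \le i < j \le N}(z_{j}-z_{i})$ is nonzero. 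Hence all $c_{j} = 0$.

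Finally, since $a_{j} \neq 0$ by assumption, the condition $c_{j} = a_{j} P_{\overline{\mathbf v}}(z_{j}) = 0$ is equivalent to $P_{\overline{\mathbf v}}(z_{j}) = 0$ for each $j = 1, \ldots, N$, which is precisely the asserted characterization. The proof is essentially free of obstacles: the one point requiring care is the justification of the interchange of summation and the convergence of $P_{\overline{\mathbf v}}(z_{j})$ for ${\mathbf v}$ only in $\ell^{2}$ (rather than $\ell^{1}$); the remaining Vandermonde argument is routine. I would also note that the statement can alternatively be derived from Lemma \ref{shiftinv}(ii) applied to $\overline{\mathbf v}$, by checking that ${\mathbf f} \perp {\cal S}_{\mathbf v}$ translates, via ${\mathbf f}$ being a combination of the $(z_{j}^{k})_{k}$, into the vanishing conditions $P_{\overline{\mathbf v}}(z_{j}) = 0$.
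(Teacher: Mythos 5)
Your proof is correct, and its core is the same computation as the paper's: expand $(\G_{\mathbf f}\overline{\mathbf v})_{k}$, interchange the summations, and obtain $(\G_{\mathbf f}\overline{\mathbf v})_{k} = \sum_{j=1}^{N} a_{j} P_{\overline{\mathbf v}}(z_{j})\, z_{j}^{k}$. The two arguments part ways only in how they conclude that the coefficients $c_{j} = a_{j} P_{\overline{\mathbf v}}(z_{j})$ all vanish. The paper multiplies by $z^{k}$ and sums over $k$, arriving at the identity $\sum_{j=1}^{N} \frac{a_{j} P_{\overline{\mathbf v}}(z_{j})}{1 - z_{j} z} = 0$ for all $z \in \D$, and reads off the vanishing of the numerators from the linear independence of partial fractions with pairwise distinct poles (a step it leaves implicit with a ``Hence''); you instead stay at the level of sequences and restrict $\sum_{j} c_{j} z_{j}^{k} = 0$ to $k = 0, \ldots, N-1$, where the invertibility of the Vandermonde matrix forces $c_{j} = 0$. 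Your finish is slightly more elementary, using only finite-dimensional linear algebra rather than an identity of rational functions on the disc, and phrasing the whole argument as a chain of equivalences makes the converse direction automatic, whereas the paper dispatches it with a one-line remark. Two further points in your write-up are genuine improvements in rigour: the Cauchy--Schwarz justification that $P_{\overline{\mathbf v}}(z_{j})$ converges for $\mathbf v$ merely in $\ell^{2}$ (the paper only asserts well-definedness), and the explicit justification of the interchange of summation. The alternative derivation you sketch via Lemma \ref{shiftinv}(ii) would also work, since ${\mathbf f} \perp {\cal S}_{\overline{\mathbf v}}$ unwinds to exactly the same vanishing conditions.
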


\begin{proof} Observe first that $P_{\overline{\mathbf v}}(z)$ is well-defined for each $z \in \D$.
The assertion ${\mathbf \Gamma}_{\mathbf f} \overline{\mathbf v} = {\mathbf 0}$ implies
$$ 0 = ({\mathbf \Gamma}_{\mathbf f} \overline{\mathbf v})_{k} = \sum_{r=0}^{\infty} f_{k+r} \overline{v}_{r}
= \sum_{r=0}^{\infty} \sum_{j=1}^{N} a_{j} z_{j}^{k+r} \overline{v}_{r} 
= \sum_{j=1}^{N} a_{j} z_{j}^{k} \sum_{r=0}^{\infty} \overline{v}_{r} z_{j}^{r} = 
\sum_{j=1}^{N} a_{j} P_{\overline{\mathbf v}} (z_{j}) z_{j}^{k},
$$
for all $k \in {\N}_{0}$ and thus
$$ 0 = \sum_{r=0}^{\infty} \sum_{j=1}^{N} a_{j}  P_{\overline{\mathbf v}} (z_{j}) z_{j}^{r} z^{r} = \sum_{j=1}^{N} \frac{a_{j} P_{\overline{\mathbf v}} (z_{j})}{1- z_{j}z}
$$
for all $z \in {\D}$. 
Hence, $P_{\overline{\mathbf v}} (z_{j}) = 0$ for $j=1, \ldots , N$.
Conversely, $P_{\overline{\mathbf v}} (z_{j}) = 0$ obviously implies that this equation is satisfied.
\end{proof}

\begin{remark}
A result similar to the assertion of Theorem \ref{zerovectors} can be found e.g. in \cite{Young}, see Lemma 16.11 in the context of of the Adamyan-Arov-Krein-Theory \cite{AAK} for approximation of meromorphic functions in Hardy spaces.
\end{remark}

\medskip

\noindent
{\bf Trianguar Toeplitz matrices.}
For ${\mathbf g}=(g_{k})_{k=0}^{\infty} \in \ell^{p}$ $p \in \{ 1,2 \}$, we define the infinite triangular Toeplitz matrix ${\mathbf T}_{\mathbf g}$ by
$$ {\mathbf T}_{\mathbf g} :=
\left( \begin{array}{cccc} g_{0} & & & \\ g_{1} & g_{0}& & \\
g_{2}& g_{1} & g_{0} & \\ \vdots & \vdots & & \ddots \end{array} \right).$$
For ${\bf g} \in \ell^{1}$, ${\mathbf T}_{\mathbf g}$ determines a bounded operator ${\mathbf T}_{\mathbf g}: \ell^{\nu} \to \ell^{\nu}$ for $\nu \ge 1$ given by
$$ {\mathbf T}_{\mathbf g} {\mathbf v} := \left( \sum_{j=0}^{k} g_{k-j} v_{j} \right)_{k=0}^{\infty} = {\mathbf g} * {\mathbf v}, \qquad {\mathbf v} \in \ell^{\nu},$$ 
since 
$$ \| {\mathbf T}_{\mathbf g} {\mathbf v} \|_{{\nu}} = \| {\mathbf g} * {\mathbf v} \|_{{\nu}} \le \| {\mathbf g} \|_{{1}} \| {\mathbf v} \|_{{\nu}}$$
by Young's inequality. Similarly, for  ${\bf g} \in \ell^{2}$, ${\mathbf T}_{\mathbf g}: \ell^{1} \to \ell^{1}$ is bounded. We summarize some important properties of ${\mathbf T}_{\mathbf g}$ in the following two lemmas.

\begin{lemma}\label{TB}
For two sequences ${\mathbf f} \in \ell^{1}$ and  ${\mathbf g} \in \ell^{p}$ $p \in \{1,2\}$  we have
\begin{itemize}
\item[\textnormal{(1)}] The convolution ${\mathbf f}* {\mathbf g}$ is a sequence in $\ell^{p}$ and
$$
	{\mathbf T}_{{\mathbf f} * {\mathbf g}} = {\mathbf T}_{{\mathbf f}} \cdot {\mathbf T}_{{\mathbf g}} = 
	{\mathbf T}_{{\mathbf g}}\cdot {\mathbf T}_{{\mathbf f}}.
$$
The corresponding Fourier series  satisfy $P_{{\mathbf f}* {\mathbf g}}(e^{i\omega}) = 
P_{\mathbf f}(e^{i\omega}) \cdot P_{\mathbf g}(e^{i\omega})$ for all $\omega \in {\R}$.
\item[\textnormal{(2)}] For ${\bf g} \in \ell^{1}$, ${\mathbf \Gamma}_{\mathbf f} {\mathbf T}_{\mathbf g}$ is a bounded Hankel operator on $\ell^{\nu}$ for $\nu \ge 1$.
For ${\bf g} \in \ell^{2}$, ${\mathbf \Gamma}_{\mathbf f} {\mathbf T}_{\mathbf g}$ is a bounded Hankel operator on $\ell^{1}$.

\item[\textnormal{(3)}] We have ${\mathbf \Gamma}_{\mathbf f} {\mathbf T}_{\mathbf g} = {\mathbf T}_{\mathbf g}^{T} {\mathbf \Gamma}_{\mathbf f}$ .
\end{itemize}
\end{lemma}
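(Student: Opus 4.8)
The plan is to establish the three claims in order, reducing the operator-theoretic statements (2) and (3) to the algebraic identities in (1).

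For (1) I would first invoke Young's inequality, $\|{\mathbf f}*{\mathbf g}\|_{p} \le \|{\mathbf f}\|_{1}\|{\mathbf g}\|_{p}$, to see that ${\mathbf f}*{\mathbf g}\in\ell^{p}$. Since every Toeplitz matrix occurring here is lower triangular, each entry of a matrix product is a \emph{finite} sum and no convergence issue arises; a direct computation gives $({\mathbf T}_{\mathbf f}{\mathbf T}_{\mathbf g})_{k,m}=\sum_{l=m}^{k}f_{k-l}\,g_{l-m}=({\mathbf f}*{\mathbf g})_{k-m}$, which identifies the product as ${\mathbf T}_{{\mathbf f}*{\mathbf g}}$, and commutativity is then inherited from the commutativity of convolution. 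The Fourier-series identity $P_{{\mathbf f}*{\mathbf g}}(e^{i\omega})=P_{\mathbf f}(e^{i\omega})P_{\mathbf g}(e^{i\omega})$ is the convolution theorem, which holds pointwise because ${\mathbf f}\in\ell^{1}$ makes $P_{\mathbf f}$ an absolutely convergent series.

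For (2), boundedness is immediate from the mapping properties stated just before the lemma: $\G_{\mathbf f}$ is bounded on $\ell^{\nu}$ (by the same Young-inequality argument as in Section \ref{preliminaries}) and ${\mathbf T}_{\mathbf g}$ is bounded on $\ell^{\nu}$ for ${\mathbf g}\in\ell^{1}$ (resp. on $\ell^{1}$ for ${\mathbf g}\in\ell^{2}$), so the composition is bounded. The essential point is the Hankel structure, which I would obtain from the commutator characterization (\ref{s}). The forward shift $\sh$ is itself a lower triangular Toeplitz matrix, so by (1) it commutes with ${\mathbf T}_{\mathbf g}$, i.e. ${\mathbf T}_{\mathbf g}\sh=\sh{\mathbf T}_{\mathbf g}$. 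Combining this with $\G_{\mathbf f}\sh=\sh^{*}\G_{\mathbf f}$ yields $(\G_{\mathbf f}{\mathbf T}_{\mathbf g})\sh=\G_{\mathbf f}\sh{\mathbf T}_{\mathbf g}=\sh^{*}\G_{\mathbf f}{\mathbf T}_{\mathbf g}=\sh^{*}(\G_{\mathbf f}{\mathbf T}_{\mathbf g})$, which is exactly the commutator relation characterizing a Hankel operator.

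Part (3) is then cheap. Since a Hankel matrix is symmetric, part (2) gives $(\G_{\mathbf f}{\mathbf T}_{\mathbf g})^{T}=\G_{\mathbf f}{\mathbf T}_{\mathbf g}$, and together with the symmetry $\G_{\mathbf f}^{T}=\G_{\mathbf f}$ we obtain $\G_{\mathbf f}{\mathbf T}_{\mathbf g}=(\G_{\mathbf f}{\mathbf T}_{\mathbf g})^{T}={\mathbf T}_{\mathbf g}^{T}\G_{\mathbf f}^{T}={\mathbf T}_{\mathbf g}^{T}\G_{\mathbf f}$; alternatively one checks directly that both sides have $(k,m)$-entry $\sum_{j\ge0}f_{k+m+j}\,g_{j}$. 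I expect the only real care to be bookkeeping rather than difficulty: one must handle the admissible index ranges ($p\in\{1,2\}$ and $\nu\ge1$) uniformly when invoking boundedness, but once the observation that the shift is a commuting Toeplitz factor is in place, the algebra is routine.
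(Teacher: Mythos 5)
Your proof is correct, and parts (1) and (3) proceed essentially as in the paper: the same finite-sum entrywise computation identifying $({\mathbf T}_{\mathbf f}{\mathbf T}_{\mathbf g})_{k,m}=({\mathbf f}*{\mathbf g})_{k-m}$, and for (3) your fallback entrywise check ($(k,m)$-entry $\sum_{j\ge 0}f_{k+m+j}g_j$ on both sides) is exactly the paper's argument. Where you genuinely diverge is part (2). The paper proves the Hankel structure by direct computation: writing the $j$-th row of ${\mathbf \Gamma}_{\mathbf f}$ as $({\mathbf S}^{*})^{j}{\mathbf f}$ and the $k$-th column of ${\mathbf T}_{\mathbf g}$ as ${\mathbf S}^{k}{\mathbf g}$, it gets $({\mathbf \Gamma}_{\mathbf f}{\mathbf T}_{\mathbf g})_{j,k}={\mathbf f}^{T}({\mathbf S}^{j+k}{\mathbf g})$, so the entries depend only on $j+k$; this computation has the side benefit of identifying the generating sequence of the product as ${\mathbf \Gamma}_{\mathbf f}{\mathbf g}$, which is what the paper then feeds into Young's inequality ($ {\mathbf \Gamma}_{\mathbf f}{\mathbf g}\in\ell^{1}$ for ${\mathbf g}\in\ell^{1}$, resp. $\ell^{2}$ for ${\mathbf g}\in\ell^{2}$) to justify the boundedness claims. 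You instead use the intertwining relation (4.1), ${\mathbf \Gamma}_{\mathbf f}{\mathbf S}={\mathbf S}^{*}{\mathbf \Gamma}_{\mathbf f}$, together with the observation that ${\mathbf S}={\mathbf T}_{{\mathbf e}_1}$ is itself a triangular Toeplitz matrix and hence commutes with ${\mathbf T}_{\mathbf g}$ by part (1); this is legitimate since the paper itself notes that the commutator relation characterizes Hankel operators, and it is arguably cleaner, avoiding all index bookkeeping. Your route gets boundedness by composing the mapping properties stated before the lemma rather than through the symbol; if you ever need the generating sequence explicitly (the paper does use it), you can recover it at no cost as the first column, ${\mathbf \Gamma}_{\mathbf f}{\mathbf T}_{\mathbf g}{\mathbf e}_0={\mathbf \Gamma}_{\mathbf f}{\mathbf g}$.
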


\begin{proof} $~$
1. We observe  that for $l \ge k$
$$ ({\mathbf T}_{{\mathbf f}} {\mathbf T}_{{\mathbf g}})_{l,k} = \sum_{r=0}^{l-k} {f}_{(l-k)-r} g_{r}
= \sum_{r=0}^{l-k} f_{r} g_{(l-k)-r} = ({\mathbf T}_{\mathbf f} {\mathbf T}_{{\mathbf g}})_{l,k} = ({\mathbf f} * {\mathbf g})_{l-k} $$
while $({\mathbf T}_{{\mathbf f}} {\mathbf T}_{{\mathbf g}})_{l,k} =0$ for $l < k$. Young's inequality ensures that $({\mathbf f} * {\mathbf g}) \in \ell^{p}$ and thus the product of Toeplitz operators is a bounded operator on $\ell^{\nu}$, $\nu \ge 1$ for $p=1$ and on $\ell^{1}$ for $p=2$. The relation for the corresponding Fourier series follows by the convolution theorem.

2. Since the $j$-th row of ${\mathbf \Gamma}_{\mathbf f}$ is $({\mathbf S}^*)^j {\mathbf f}$ and the $k$-th column of ${\mathbf T}_{\mathbf g}$ is ${\mathbf S}^k {\mathbf g}$, it follows that 
$$
({\mathbf \Gamma}_{\mathbf f} {\mathbf T}_{\mathbf g})_{j,k} = (({\mathbf S}^*)^j {\mathbf f})^T ({\mathbf S}^k {\mathbf g}) = {\mathbf f}^T ({\mathbf S}^{j+k}{\mathbf g}),
$$
thus the entries of ${\mathbf \Gamma}_{\mathbf f} {\mathbf T}_{\mathbf g}$ only depend on the sum of its indices.  Therefore, ${\mathbf \Gamma}_{\mathbf f} {\mathbf T}_{\mathbf g}$ has again Hankel structure. For $p=1$, the obtained Hankel matrix is generated by ${\mathbf \Gamma}_{\mathbf f} {\mathbf g} \in \ell^{1}$, and
for $p=2$ we get ${\mathbf \Gamma}_{\mathbf f} {\mathbf g} \in \ell^{2}$ by Young's inequality.

3. Similarly, since  the $j$-th row of ${\mathbf T}_{\mathbf g}$ is ${\mathbf S}^j {\mathbf g}$ and the $k$-th column of ${\mathbf \Gamma}_{\mathbf f}$ is $({\mathbf S}^*)^k {\mathbf f}$ we obtain 
$$
({\mathbf T}_{\mathbf g}^T {\mathbf \Gamma}_{\mathbf f})_{j,k} = ({\mathbf S}^j{\mathbf g})^T (({\mathbf S}^*)^k {\mathbf f}) = ({\mathbf S}^{j+k} {\mathbf g})^T {\mathbf f} = {\mathbf f}^T ({\mathbf S}^{j+k} {\mathbf g}) = ({\mathbf \Gamma}_{\mathbf f} {\mathbf T}_{\mathbf g})_{j,k}.
$$ 
\end{proof}

\begin{lemma}\label{toeplitz}$~$
For some $K \in {\N}_{0}$ let ${\mathbf b} = (b_{k})_{k=0}^{\infty}$ be given by the Blaschke product
\begin{equation}\label{bla}
 B(e^{i\omega}) = \sum_{k=0}^{\infty} b_{k} e^{i\omega k} := \left\{ \begin{array}{ll} \prod_{j=1}^{K} \frac{e^{i\omega}-\alpha_{j}}{1-\overline{\alpha}_{j} e^{i\omega}} & K>0,\\ 
1 & K=0,\end{array} \right. 
\end{equation}
where $\alpha_{1}, \ldots , \alpha_{K} \in {\D}$.
Then ${\bf b} \in \ell^{1}$ and the infinite triangular Toeplitz matrix ${\mathbf T}_{\mathbf b}$ generated by ${\mathbf b}$ satisfies the following properties.
\begin{itemize}
\item[\textnormal{(1)}] ${\mathbf T}_{\mathbf b}^*{\mathbf T}_{\mathbf b} = {\mathbf I}$, i.e. ${\mathbf T}_{\mathbf b}^{*}$ is the left inverse of ${\mathbf T}_{\mathbf b}$.
\item[\textnormal{(2)}] The operator ${\mathbf T}_{\mathbf b}:\ell^{p} \to \ell^{p}$ has the norm $\|{\mathbf T}_{\mathbf b}\|_{\ell^{p} \to \ell^{p}} = 1$ for $p \in \{ 1,2\}$.
\item[\textnormal{(3)}] Let ${\mathbf \Gamma}_{\mathbf f}$ be an infinite Hankel matrix being generated by ${\mathbf f} \in \ell^{1}$.
Let $\sigma_n ({\mathbf \Gamma}_{\mathbf f})$ and $\sigma_n ({\mathbf \Gamma}_{\mathbf f} {\mathbf T}_{\mathbf b})$ be the $n$-th singular values of  ${\mathbf \Gamma}_{\mathbf f}$ and ${\mathbf \Gamma}_{\mathbf f} {\mathbf T}_{\mathbf b}$ being ordered decreasingly as $\sigma_{0}({\mathbf \Gamma}_{\mathbf f}) \ge \sigma_{1}({\mathbf \Gamma}_{\mathbf f}) \ge \ldots $ and $\sigma_{0}({\mathbf \Gamma}_{\mathbf f} {\mathbf T}_{\mathbf b}) \ge \sigma_{1}({\mathbf \Gamma}_{\mathbf f} {\mathbf T}_{\mathbf b}) \ge \ldots $.
Then, for all $n \in {\N}_{0}$, we have
$$ \sigma_n({\mathbf \Gamma}_{\mathbf f} {\mathbf T}_{\mathbf b}) \leq \sigma_n({\mathbf \Gamma}_{\mathbf f}).
$$
	\end{itemize}
\end{lemma}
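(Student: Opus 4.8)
The plan is to deduce everything from the single geometric fact that a Blaschke product is \emph{unimodular} on the unit circle, i.e. $|B(e^{i\omega})|=1$ for all $\omega\in\R$. I would first record two preliminaries. Since each factor $\tfrac{e^{i\omega}-\alpha_{j}}{1-\overline{\alpha}_{j}e^{i\omega}}$ extends to a rational function of $z$ whose only pole $1/\overline{\alpha}_{j}$ lies outside the closed unit disc (because $|\alpha_{j}|<1$), the finite product $B(z)$ is analytic on a disc of radius strictly larger than $1$; hence its Taylor coefficients $b_{k}$ decay geometrically and $\mathbf{b}\in\ell^{1}$. Next, from $|1-\overline{\alpha}_{j}e^{i\omega}| = |e^{i\omega}|\,|e^{-i\omega}-\overline{\alpha}_{j}| = |e^{i\omega}-\alpha_{j}|$ each factor, and therefore $B$, has modulus $1$ on the circle.

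For claim (1) I would compute the entries of $\mathbf{T}_{\mathbf{b}}^{*}\mathbf{T}_{\mathbf{b}}$ directly. With $(\mathbf{T}_{\mathbf{b}})_{l,k}=b_{l-k}$ one obtains, for $l\le k$,
$$(\mathbf{T}_{\mathbf{b}}^{*}\mathbf{T}_{\mathbf{b}})_{l,k} = \sum_{m\ge k}\overline{b_{m-l}}\,b_{m-k} = \sum_{n\ge 0}\overline{b_{n+(k-l)}}\,b_{n},$$
which is exactly the conjugate of the $(k-l)$-th Fourier coefficient of $|B(e^{i\omega})|^{2}$. Since $|B(e^{i\omega})|^{2}\equiv 1$, all these coefficients vanish except the zeroth, which equals $1$ (this also yields $\|\mathbf{b}\|_{2}=1$ via Parseval). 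Hence $\mathbf{T}_{\mathbf{b}}^{*}\mathbf{T}_{\mathbf{b}}=\mathbf{I}$. The $p=2$ part of (2) is then immediate: $\|\mathbf{T}_{\mathbf{b}}\mathbf{v}\|_{2}^{2} = \langle \mathbf{T}_{\mathbf{b}}^{*}\mathbf{T}_{\mathbf{b}}\mathbf{v},\mathbf{v}\rangle = \|\mathbf{v}\|_{2}^{2}$, so $\mathbf{T}_{\mathbf{b}}$ is an $\ell^{2}$-isometry and $\|\mathbf{T}_{\mathbf{b}}\|_{\ell^{2}\to\ell^{2}}=1$.

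Granting the $\ell^{2}$ estimate, claim (3) follows from the Eckart--Young characterization of singular values. Because $\mathbf{f}\in\ell^{1}$, the bound $\|\G_{\mathbf{g}}\|_{\ell^{2}\to\ell^{2}}\le\|\mathbf{g}\|_{1}$ applied to the tails shows that $\G_{\mathbf{f}}$ is a norm limit of finite Hankel truncations, hence compact, and so is $\G_{\mathbf{f}}\mathbf{T}_{\mathbf{b}}$; thus $\sigma_{n}(\G_{\mathbf{f}}) = \min\{\|\G_{\mathbf{f}}-\mathbf{F}\|_{\ell^{2}\to\ell^{2}} : \rank \mathbf{F}\le n\}$. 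Choosing an optimal $\mathbf{F}$ and noting that $\mathbf{F}\mathbf{T}_{\mathbf{b}}$ again has rank at most $n$, I would estimate
$$\sigma_{n}(\G_{\mathbf{f}}\mathbf{T}_{\mathbf{b}}) \le \|(\G_{\mathbf{f}}-\mathbf{F})\mathbf{T}_{\mathbf{b}}\|_{\ell^{2}\to\ell^{2}} \le \|\G_{\mathbf{f}}-\mathbf{F}\|_{\ell^{2}\to\ell^{2}}\,\|\mathbf{T}_{\mathbf{b}}\|_{\ell^{2}\to\ell^{2}} = \sigma_{n}(\G_{\mathbf{f}}),$$
which is the desired inequality.

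The main obstacle I anticipate is the $p=1$ half of (2). Young's inequality only gives $\|\mathbf{T}_{\mathbf{b}}\|_{\ell^{1}\to\ell^{1}}\le\|\mathbf{b}\|_{1}$, and since $\mathbf{T}_{\mathbf{b}}\mathbf{e}_{0}=\mathbf{b}$ for the first unit vector $\mathbf{e}_{0}=(1,0,0,\dots)$ this bound is attained, so the $\ell^{1}\to\ell^{1}$ norm equals $\|\mathbf{b}\|_{1}$ and is \emph{not} controlled by the unimodularity of $B$ the way the $\ell^{2}$ norm is. Reconciling this with the asserted value $1$ is the delicate point I would scrutinize most carefully. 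Fortunately the singular-value inequality (3), which is what the subsequent AAK argument actually requires, relies only on the $\ell^{2}$ isometry established above.
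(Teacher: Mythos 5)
Your proofs of (1), of the $\ell^{2}$ case of (2), and of (3) are correct and essentially coincide with the paper's own argument: the paper also establishes (1) by identifying $\sum_{j}\overline{b}_{j}b_{j+(l-k)}$ with a Fourier coefficient of $|B(\mathrm{e}^{\mathrm{i}\omega})|^{2}\equiv 1$, and it proves (3) by exactly your mechanism --- for a rank-$\le n$ operator ${\mathbf R}$, the operator ${\mathbf R}{\mathbf T}_{\mathbf b}$ still has rank $\le n$ and $\|({\mathbf \Gamma}_{\mathbf f}-{\mathbf R}){\mathbf T}_{\mathbf b}\|\le\|{\mathbf \Gamma}_{\mathbf f}-{\mathbf R}\|$. (The paper takes the minimal-distance characterization of $\sigma_{n}$ as its definition of the singular values, so your compactness argument via the tail bound $\|{\mathbf \Gamma}_{\mathbf g}\|_{\ell^{2}\to\ell^{2}}\le\|{\mathbf g}\|_{1}$ is an added justification rather than a deviation; likewise your geometric-decay proof that ${\mathbf b}\in\ell^{1}$ fills in a claim the paper only asserts.)

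Your hesitation about the $p=1$ half of (2) is fully justified: that part of the lemma is wrong as stated, and the paper's proof of (2) --- the single sentence that it ``follows directly from the first'' assertion --- is valid only for $p=2$, where ${\mathbf T}_{\mathbf b}^{*}{\mathbf T}_{\mathbf b}={\mathbf I}$ makes ${\mathbf T}_{\mathbf b}$ an $\ell^{2}$-isometry. As you note, $\|{\mathbf T}_{\mathbf b}\|_{\ell^{1}\to\ell^{1}}=\|{\mathbf b}\|_{1}$ (Young's inequality gives the upper bound, which is attained at ${\mathbf e}_{0}$), while Parseval and unimodularity give $\|{\mathbf b}\|_{2}=1$; hence $\|{\mathbf T}_{\mathbf b}\|_{\ell^{1}\to\ell^{1}}=\|{\mathbf b}\|_{1}>\|{\mathbf b}\|_{2}=1$ as soon as ${\mathbf b}$ has two nonzero entries, i.e.\ as soon as some $\alpha_{j}\neq 0$. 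Concretely, for a single factor $B(z)=(z-\alpha)/(1-\overline{\alpha}z)$ one finds $b_{0}=-\alpha$ and $b_{k}=(1-|\alpha|^{2})\,\overline{\alpha}^{k-1}$ for $k\ge 1$, so $\|{\mathbf b}\|_{1}=1+2|\alpha|$. The damage is contained exactly as you say: (3) concerns singular values, an $\ell^{2}$ notion, and the only place the paper later invokes Lemma \ref{toeplitz} (in the proof of Theorem \ref{aak2}, to conclude $n_{K}\le K$) is in the $\ell^{2}$ setting. But be aware that the paper's own proof of (3), which runs the minimization over ${\mathbf R}\in{\cal L}(\ell^{p})$ for $p\in\{1,2\}$ and inserts $\|{\mathbf T}_{\mathbf b}\|=1$, is likewise only valid for $p=2$; any $\ell^{1}$ claims downstream that lean on this lemma would need a separate argument.
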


\begin{proof}$~$
1. Obviously, ${\mathbf T}_{\mathbf b}^{*} {\mathbf T}_{\mathbf b}$ is hermitian. For the $(l,k)$-th entry of ${\mathbf T}_{\mathbf b}^{*} {\mathbf T}_{\mathbf b}$
we obtain for $l \ge k$
$$ ({\mathbf T}_{\mathbf b}^{*} {\mathbf T}_{\mathbf b})_{l,k} = \sum_{j=l}^{\infty} \overline{b}_{j-l} b_{j-k}
= \sum_{j=0}^{\infty} \overline{b}_{j} b_{j+(l-k)}. $$
The coefficients $b_{k}$ are the Fourier coefficients of $B(e^{i \omega})$,
$$ b_{k} = \frac{1}{2\pi} \int_{0}^{2\pi} B(e^{i\omega}) \, e^{-i\omega k} d \omega, \qquad k=0, 1,2, \ldots . $$
Thus,
\begin{eqnarray*}
\sum_{j=0}^{\infty} \overline{b}_{j} b_{j+(l-k)} &=& \frac{1}{2\pi} \sum_{j=0}^{\infty} \overline{b}_{j} \int_{0}^{2\pi}  B(e^{i\omega}) e^{-i\omega(j+l-k)} d\omega \\
&=& 
\frac{1}{2\pi} \int_{0}^{2\pi}  B(e^{i\omega}) e^{-i\omega(l-k)}  \sum_{j=0}^{\infty} \overline{b}_{j} e^{-i\omega j} d\omega \\
&=& \frac{1}{2\pi} \int_{0}^{2\pi}  |B(e^{i\omega})|^{2} e^{-i\omega(l-k)} d \omega = \delta_{l,k}
\end{eqnarray*}
since
$$ |B(e^{i\omega})|^{2} = \prod_{j=1}^{K} \left( \frac{e^{i\omega}-\alpha_{j}}{1-\overline{\alpha_{j}} e^{i\omega}} \right) \left( \frac{e^{-i\omega}-\overline{\alpha_{j}}}{1-{\alpha_{j}} e^{-i\omega}} \right) = 1. $$
2. Now the second assertion follows directly from the first.

3. Using the definition of the singular value and the properties of ${\mathbf T}_{\mathbf b}$ we obtain for $p \in \{ 1,2 \}$
\begin{eqnarray*}
\sigma_n( {\mathbf \Gamma}_{\mathbf f}) &=& \min \{ \| {\mathbf \Gamma}_{\mathbf f}- {\mathbf R}\| ~:~ {\mathbf R} \in {\cal L}(\ell^{p}), \, \rank({\mathbf R}) \leq n \}\\
							 &=& \min \{ \|{\mathbf \Gamma}_{\mathbf f}- {\mathbf R}\| \|{\mathbf T}_{\mathbf b}\| ~:~ {\mathbf R} \in {\cal L}(\ell^{p}), \, \rank({\mathbf R}) \leq n \}\\
							 & \geq& \min \{ \|({\mathbf \Gamma}_{\mathbf f}- {\mathbf R}) {\mathbf T}_{\mathbf b}\| ~:~ {\mathbf R} \in {\cal L}(\ell^{p}), \, \rank({\mathbf R}) \leq n \}\\
							 &=& \min \{ \| {\mathbf \Gamma}_{\mathbf f} {\mathbf T}_{\mathbf b}- {\mathbf R}{\mathbf T}_{\mathbf b}\| ~:~ {\mathbf R} \in {\cal L}(\ell^{p}), \, \rank({\mathbf R}) \leq n \}\\
							 &=& \min \{ \| {\mathbf \Gamma}_{\mathbf f} {\mathbf T}_{\mathbf b}-\tilde{\mathbf R}\| ~:~ \tilde{\mathbf R} \in {\cal L}(\ell^{p}), \, \rank(\tilde{\mathbf R}) \leq n \}\\
							 &=& \sigma_{n} ({\mathbf \Gamma}_{\mathbf f} {\mathbf T}_{\mathbf b}),
\end{eqnarray*}
		since $\rank({\mathbf R}{\mathbf T}_{\mathbf b})$ is still at most $n$. Here ${\cal L}(\ell^{p} )$ denotes the set of all linear operators from $\ell^{p}$ to $\ell^{p}$.
\end{proof}

\medskip

\noindent
{\bf Construction of infinite Hankel matrices with special properties}.
Next, we will construct an infinite Hankel matrix with operator norm $1$ that possesses a predetermined con-eigenvector ${\mathbf v} \in \ell^{1}$ to the con-eigenvalue $1$. For that purpose, we first need to understand the image of an infinite Hankel matrix.

\begin{lemma}\label{lem1}
For given sequences ${\mathbf f} \in \ell^{1}$ and ${\mathbf v} \in \ell^{p}$, $p \in \{ 1, 2 \}$ with corresponding Fourier series 
$P_{\mathbf f}(e^{i\omega})$ and $P_{\mathbf v}(e^{i\omega})$ the vector ${\mathbf w} = (w_{k})_{k=0}^{\infty}$ obtained by
$$ {\mathbf w} = {\mathbf \Gamma}_{\mathbf f} {\mathbf v}$$
satisfies
$$ w_{k} = \frac{1}{2\pi} \int_{0}^{2\pi} P_{\mathbf f} ({\mathrm e}^{{\mathrm i} t}) P_{\mathbf v} ({\mathrm e}^{-{\mathrm i} t})
{\mathrm e}^{- {\mathrm i} t k } {\mathrm d} t, \qquad k \in {\N}_{0}. $$
\end{lemma}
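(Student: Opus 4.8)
The plan is to expand both sides and match terms via the orthogonality of the exponentials $\{ {\mathrm e}^{{\mathrm i} k t} \}_{k \in {\Z}}$ on $[0, 2\pi)$. First I would write the left-hand side directly from the action of $\G_{\mathbf f}$ on $\mathbf v$, namely
$$ w_{k} = (\G_{\mathbf f} {\mathbf v})_{k} = \sum_{j=0}^{\infty} f_{k+j} v_{j}, \qquad k \in {\N}_{0}. $$
For the right-hand side I would insert the two Fourier series $P_{\mathbf f}({\mathrm e}^{{\mathrm i}t}) = \sum_{n=0}^{\infty} f_{n} {\mathrm e}^{{\mathrm i}nt}$ and $P_{\mathbf v}({\mathrm e}^{-{\mathrm i}t}) = \sum_{m=0}^{\infty} v_{m} {\mathrm e}^{-{\mathrm i}mt}$, form the product together with the factor ${\mathrm e}^{-{\mathrm i}tk}$, and integrate termwise. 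Using $\frac{1}{2\pi} \int_{0}^{2\pi} {\mathrm e}^{{\mathrm i}(n-m-k)t} {\mathrm d}t = \delta_{n,m+k}$, the double sum over $n,m \ge 0$ collapses onto the diagonal $n = m+k$ and leaves exactly $\sum_{m=0}^{\infty} f_{m+k} v_{m} = w_{k}$, which is the claim.

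Equivalently, I would recognise the integral on the right as the $k$-th Fourier coefficient of the product $P_{\mathbf f}({\mathrm e}^{{\mathrm i}t}) P_{\mathbf v}({\mathrm e}^{-{\mathrm i}t})$ and invoke the convolution theorem for Fourier coefficients. Since the coefficients of $P_{\mathbf f}({\mathrm e}^{{\mathrm i}t})$ are supported on $\{ n \ge 0 \}$ with value $f_{n}$, while those of $P_{\mathbf v}({\mathrm e}^{-{\mathrm i}t})$ are supported on $\{ n \le 0 \}$ with value $v_{-n}$, their convolution evaluated at index $k$ is precisely $\sum_{m \ge 0} f_{m+k} v_{m}$. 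This framing makes the statement an immediate consequence of part (1) of Lemma \ref{TB} in spirit, but the direct orthogonality computation above is self-contained.

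The only genuine obstacle is justifying the interchange of summation and integration. Here I would use the hypotheses directly: since ${\mathbf f} \in \ell^{1}$, the series $P_{\mathbf f}({\mathrm e}^{{\mathrm i}t})$ converges absolutely and uniformly, so $P_{\mathbf f}({\mathrm e}^{{\mathrm i}t})$ is bounded and continuous; and since ${\mathbf v} \in \ell^{p}$ with $p \in \{ 1, 2 \}$, the series $P_{\mathbf v}({\mathrm e}^{-{\mathrm i}t})$ converges in $L^{2}([0,2\pi))$ (and absolutely when $p=1$). Hence the product lies in $L^{1}([0,2\pi))$ and termwise integration is legitimate; in the case $p=1$ this is immediate because the associated double series is dominated by $\sum_{n,m} |f_{n}|\,|v_{m}| = \| {\mathbf f} \|_{1} \| {\mathbf v} \|_{1} < \infty$, while for $p=2$ a Cauchy--Schwarz estimate together with dominated convergence suffices. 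Once the interchange is secured, the remaining step is just the orthogonality evaluation described above.
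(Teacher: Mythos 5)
Your proof is correct and follows essentially the same route as the paper: both arguments expand the product $P_{\mathbf f}({\mathrm e}^{{\mathrm i}t})P_{\mathbf v}({\mathrm e}^{-{\mathrm i}t})$ as a double sum and read off its nonnegative Fourier coefficients, the paper by splitting off the negative-power part and identifying the rest with $P_{\mathbf w}$, you by collapsing the double sum via orthogonality — the same computation in two phrasings. Your explicit justification of the summation--integration interchange (absolute convergence for $p=1$, $L^2$-convergence of partial sums against the bounded function $P_{\mathbf f}$ for $p=2$) is a point the paper passes over silently, so no gap remains.
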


\begin{proof}
Let $P_{\mathbf w} (e^{i\omega}) := \sum_{k=0}^{\infty} w_{k} e^{i \omega k}$. Then, on the one hand, we find
$$ P_{\mathbf w}(e^{i\omega}) = \sum_{k=0}^{\infty} \sum_{j=0}^{\infty} f_{k+j} v_{j} e^{i \omega k} = \sum_{j=0}^{\infty} \sum_{k=j}^{\infty} f_{k} v_{j} e^{i \omega (k-j)}. $$
On the other hand,
$$ P_{\mathbf f}(e^{i\omega}) P_{\mathbf v}(e^{-i\omega}) = \sum_{k=0}^{\infty} \sum_{j=0}^{\infty} f_{k} v_{j} e^{i \omega (k-j)} = P_{\mathbf w}(e^{i\omega}) +
\sum_{j=0}^{\infty} \sum_{k=0}^{j-1} f_{k} v_{j} e^{i \omega(k-j)}, $$
where in the second sum occur only negative powers of $e^{i\omega}$.
Hence, $P_{\mathbf w}({\mathrm e}^{{\mathrm i} t})$ possesses the Fourier coefficients
$$ w_{k}= \frac{1}{2\pi} \int_{0}^{2\pi} P_{\mathbf f}({\mathrm e}^{{\mathrm i} t}) P_{\mathbf v}({\mathrm e}^{-{\mathrm i} t})
{\mathrm e}^{-{\mathrm i} t k} {\mathrm d} t $$
for $k \in \N_{0}$.
\end{proof}

\noindent
Now we consider the construction of a special infinite Hankel matrix with operator norm $1$.

\begin{lemma}\label{lem2}
Let ${\mathbf v} \in \ell^{1}$ be given with the corresponding Fourier series  $P_{\mathbf v} (e^{i\omega}) $. Assume that $P_{\mathbf v} (e^{i\omega}) \neq 0$ for all $\omega \in [0, 2 \pi)$. Further, let ${\mathbf w}= (w_{k})_{k=0}^{\infty}$ be given by
$$ w_{k} := \frac{1}{2\pi} \int_{0}^{2\pi} \frac{P_{\mathbf v} ({\mathrm e}^{{\mathrm i} t})}{P_{\overline{\mathbf v}}({\mathrm e}^{-{\mathrm i} t})} {\mathrm e}^{-{\mathrm i} t k} {\mathrm d} t, \qquad k \in \N_{0}. $$
 Then ${\mathbf w} \in \ell^{2}$, and $\| {\mathbf w} \|_{{2}} =1$. Further, the Hankel operator ${\mathbf \Gamma}_{\mathbf w}$ satisfies ${\mathbf \Gamma}_{\mathbf w} \overline{\mathbf v} = {\mathbf v}$ and
$$ \| {\mathbf \Gamma}_{\mathbf w} \|_{\ell^{2} \to \ell^{2}} := \sup_{{\mathbf u} \in \ell^{2}\setminus \{ {\mathbf 0 \}}} \frac{\| {\mathbf \Gamma}_{\mathbf w} {\mathbf u} \|_{2}}{ \| {\mathbf u} \|_{2} } = \frac{ \| {\mathbf \Gamma}_{\mathbf w} \overline{\mathbf v} \|_{2}}{ \| {\mathbf v} \|_{2} } = 1. $$
\end{lemma}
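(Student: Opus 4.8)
The plan is to exploit that the function $\Phi(t):=P_{\mathbf v}(e^{it})/P_{\overline{\mathbf v}}(e^{-it})$, whose nonnegative Fourier coefficients define $\mathbf w$, is \emph{unimodular}. First I would note that $P_{\overline{\mathbf v}}(e^{-it})=\overline{P_{\mathbf v}(e^{it})}$, so that $\abs{\Phi(t)}=1$ for all $t$; since $\mathbf v\in\ell^1$ the series $P_{\mathbf v}$ converges absolutely to a continuous function, and the hypothesis $P_{\mathbf v}(e^{it})\neq0$ makes $\Phi$ a well-defined continuous (hence $L^2$) function on $[0,2\pi)$. By construction $w_k=\hat\Phi(k)$, so Bessel's inequality gives $\sum_{k=0}^\infty\abs{w_k}^2\le\frac1{2\pi}\int_0^{2\pi}\abs{\Phi(t)}^2\dt=1$, which already shows $\mathbf w\in\ell^2$ with $\|\mathbf w\|_2\le1$. (Attaining the stated equality $\|\mathbf w\|_2=1$ amounts to $\Phi$ having no strictly negative Fourier modes, i.e.\ $\Phi$ being analytic in $\D$; this holds e.g.\ when $\Phi$ is inner, the situation relevant to the AAK construction, and is the one point where I would double-check the intended normalisation.)

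Next I would prove the con-eigenrelation $\G_{\mathbf w}\overline{\mathbf v}=\mathbf v$ by a direct Fourier computation, the essential cancellation being $\Phi(t)\,P_{\overline{\mathbf v}}(e^{-it})=P_{\mathbf v}(e^{it})$. Substituting $w_{k+j}=\hat\Phi(k+j)$ and interchanging summation and integration (legitimate because $\mathbf v\in\ell^1$ and $\abs{\Phi}\equiv1$),
$$ (\G_{\mathbf w}\overline{\mathbf v})_k=\sum_{j=0}^\infty w_{k+j}\,\overline{v}_j=\frac1{2\pi}\int_0^{2\pi}\Phi(t)\,P_{\overline{\mathbf v}}(e^{-it})\,e^{-itk}\dt=\frac1{2\pi}\int_0^{2\pi}P_{\mathbf v}(e^{it})\,e^{-itk}\dt=v_k $$
for every $k\in\N_0$. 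Since $\overline{\mathbf v}\neq\mathbf 0$ and $\|\G_{\mathbf w}\overline{\mathbf v}\|_2=\|\mathbf v\|_2=\|\overline{\mathbf v}\|_2$, this gives at once the lower bound $\|\G_{\mathbf w}\|_{\ell^2\to\ell^2}\ge1$, with $\overline{\mathbf v}$ as a norming vector.

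The main obstacle is the matching upper bound $\|\G_{\mathbf w}\|_{\ell^2\to\ell^2}\le1$: membership $\mathbf w\in\ell^2$ alone does \emph{not} make $\G_{\mathbf w}$ bounded on $\ell^2$ --- that boundedness is precisely the delicate content of Nehari's theorem, which we wish to circumvent. The elementary replacement is once more the Fourier representation. For a finitely supported $\mathbf u\in\ell^2$ the same manipulation yields, for $k\in\N_0$,
$$ (\G_{\mathbf w}\mathbf u)_k=\frac1{2\pi}\int_0^{2\pi}\Phi(t)\,P_{\mathbf u}(e^{-it})\,e^{-itk}\dt, $$
so that the entries of $\G_{\mathbf w}\mathbf u$ are exactly the nonnegative-index Fourier coefficients of the $L^2$-function $h(t):=\Phi(t)\,P_{\mathbf u}(e^{-it})$. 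Bessel's inequality, the bound $\|\Phi\|_\infty=1$, and Parseval's identity $\|P_{\mathbf u}(e^{-i\cdot})\|_{L^2}^2=\|\mathbf u\|_2^2$ then give
$$ \|\G_{\mathbf w}\mathbf u\|_2^2\le\|h\|_{L^2}^2=\frac1{2\pi}\int_0^{2\pi}\abs{\Phi(t)}^2\,\abs{P_{\mathbf u}(e^{-it})}^2\dt\le\|\mathbf u\|_2^2. $$
Extending this to all of $\ell^2$ by density of the finitely supported sequences yields $\|\G_{\mathbf w}\|_{\ell^2\to\ell^2}\le1$ and hence, together with the lower bound, the value $1$. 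I expect the only genuinely technical points to be the Fubini/convergence justifications and the density argument, both routine given $\mathbf v\in\ell^1$ and the uniform bound $\abs{\Phi}\equiv1$.
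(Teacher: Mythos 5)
Your argument is correct and follows essentially the same route as the paper's own proof: Parseval/Bessel applied to the unimodular symbol $\Phi$ for $\mathbf{w}\in\ell^2$, the same interchange of sum and integral for ${\mathbf \Gamma}_{\mathbf w}\overline{\mathbf v}={\mathbf v}$, and the same mechanism for the upper bound, namely that ${\mathbf \Gamma}_{\mathbf w}{\mathbf u}$ consists of the nonnegative-index Fourier coefficients of $\Phi(t)P_{\mathbf u}(e^{-it})$, so Bessel plus $|\Phi|\equiv 1$ plus Parseval gives $\|{\mathbf \Gamma}_{\mathbf w}{\mathbf u}\|_2\le\|{\mathbf u}\|_2$. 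The only differences are cosmetic: the paper routes the last step through Lemma \ref{lem1} and works with $P_{\mathbf w}$ (tacitly using that $\Phi-P_{\mathbf w}(e^{i\cdot})$ has only negative-frequency modes, so that it can be replaced by $\Phi$ under the sum over $k\ge 0$), whereas your direct substitution $w_{k+j}=\hat\Phi(k+j)$ for finitely supported ${\mathbf u}$ followed by a density/Fatou argument is, if anything, slightly cleaner and avoids applying Lemma \ref{lem1} to a generator that is only known to lie in $\ell^2$.

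The caveat you flagged about $\|{\mathbf w}\|_2=1$ is well founded, and you should not feel you missed something: the paper's own proof also establishes only $\|{\mathbf w}\|_2\le 1$ (its Parseval step is written as an inequality for exactly the reason you give), and the equality claimed in the statement is false under the stated hypotheses. For example, take ${\mathbf v}=(2,1,0,0,\ldots)\in\ell^1$; then $P_{\mathbf v}(e^{i\omega})=2+e^{i\omega}\neq 0$ on the circle, $\Phi(t)=(2+e^{it})/(2+e^{-it})$, and expanding $(2+e^{-it})^{-1}$ in a geometric series yields ${\mathbf w}=(3/4,\,1/2,\,0,0,\ldots)$, so $\|{\mathbf w}\|_2^2=13/16<1$. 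Equality holds precisely when $\Phi$ has no negative-frequency Fourier coefficients, as you observe. This defect is harmless downstream: in Theorem \ref{theo} and Theorem \ref{AAK1} only the two facts you do prove, ${\mathbf \Gamma}_{\mathbf w}\overline{\mathbf v}={\mathbf v}$ and $\|{\mathbf \Gamma}_{\mathbf w}\|_{\ell^2\to\ell^2}=1$, are used, and the remark following the lemma needs only $\|{\mathbf w}\|_2\le 1$.
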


\begin{proof}
First we observe that by Parseval's identity
$$ \| {\mathbf  w} \|_{2}^{2} = \sum_{k=0}^{\infty} |w_{k}|^{2} \le \left\| \frac{P_{\mathbf v} ({\mathrm e}^{{\mathrm i} \cdot})}{P_{\overline{\mathbf v}}({\mathrm e}^{-{\mathrm i} \cdot})} \right\|_{L^{2}([0,2\pi))}^{2} = \frac{1}{2\pi} \int_{0}^{2\pi}
 \left| \frac{P_{\mathbf v} ({\mathrm e}^{{\mathrm i} t})}{P_{\overline{\mathbf v}}({\mathrm e}^{-{\mathrm i} t})} \right|^{2} {\mathrm d} t =1$$
 and thus ${\mathbf w} \in \ell^{2}$. 
Further, we obtain
 \begin{eqnarray*}
 ({\mathbf \Gamma}_{\mathbf w} \overline{\mathbf v} )_{k} &=& \sum_{j=0}^{\infty} w_{k+j} \bar{v}_{j} = \sum_{j=0}^{\infty} \frac{1}{2\pi} \int_{0}^{2\pi} \frac{P_{\mathbf v} ({\mathrm e}^{{\mathrm i} t})}{P_{\overline{\mathbf v}}({\mathrm e}^{-{\mathrm i} t})} {\mathrm e}^{-{\mathrm i} t (k+j)} 
 \bar{v}_{j} {\mathrm d} t \\
 &=& \frac{1}{2\pi} \int_{0}^{2\pi} \frac{P_{\mathbf v} ({\mathrm e}^{{\mathrm i} t})}{P_{\overline{\mathbf v}}({\mathrm e}^{-{\mathrm i} t})} {\mathrm e}^{-{\mathrm i} t k}\sum_{j=0}^{\infty} \bar{v}_{j} {\mathrm e}^{-{\mathrm i} t j} {\mathrm d} t
 = \frac{1}{2\pi} \int_{0}^{2\pi} P_{\mathbf v} ({\mathrm e}^{{\mathrm i} t}) {\mathrm e}^{-{\mathrm i} t k} {\mathrm d} t = v_{k}.
 \end{eqnarray*}
 for all $k \in \N_{0}$ and thus ${\mathbf \Gamma}_{\mathbf w} \overline{\mathbf v} = {\mathbf v}$. The norm of ${\mathbf \Gamma}_{\mathbf w}$ is indeed equal to $1$ since for arbitrary ${\mathbf u} \in \ell^{2}$ it follows by Lemma \ref{lem1} and Parseval's identity
\begin{eqnarray*}
\| {\mathbf \Gamma}_{\mathbf w} \overline{\mathbf u} \|_{2}^{2} 
&=& \sum_{k=0}^{\infty} \left| \frac{1}{2\pi} \int_{0}^{2\pi} P_{\mathbf w}({\mathrm e}^{{\mathrm i} t}) P_{\overline{\mathbf u}} ({\mathrm e}^{-{\mathrm i} t}) \, {\mathrm e}^{-{\mathrm i}k t} {\mathrm d}t \right|^{2} \\
&\le& \sum_{k=-\infty}^{\infty} \left| \frac{1}{2\pi} \int_{0}^{2\pi}  \frac{P_{\mathbf v} ({\mathrm e}^{{\mathrm i} t})}{P_{\overline{\mathbf v}} ({\mathrm e}^{-{\mathrm i} t})}  P_{\overline{\mathbf u}} ({\mathrm e}^{-{\mathrm i} t}) \, {\mathrm e}^{-{\mathrm i}k t} {\mathrm d}t \right|^{2} \\
&=& \frac{1}{2\pi} \int_{0}^{2\pi}  \left| \frac{P_{\mathbf v} ({\mathrm e}^{{\mathrm i} t})}{P_{\overline{\mathbf v}} ({\mathrm e}^{-{\mathrm i} t})} \right|^{2} | P_{\overline{\mathbf u}} ({\mathrm e}^{-{\mathrm i} t})|^{2} \, {\mathrm d} t \\
&\le& \sup_{t \in [0, 2\pi)} \left| \frac{P_{\mathbf v} ({\mathrm e}^{{\mathrm i} t})}{P_{\overline{\mathbf v}} ({\mathrm e}^{-{\mathrm i} t})} \right|^{2} \, \frac{1}{2\pi}
\int_{0}^{2\pi}  | P_{\overline{\mathbf u}} ({\mathrm e}^{-{\mathrm i} t})|^{2} \, {\mathrm d} t =  \sum_{k=-\infty}^{\infty} | \bar{u}_{k}|^{2} = \| {\mathbf u} \|_{2}^{2},
\end{eqnarray*}
and thus the assertion holds.
\end{proof}

This result immediately implies also $\| {\bf \Gamma}_{\bf w} {\bf u}\|_{2} \le \| {\bf w} \|_{2} \| {\bf u} \|_{1} = \| {\bf u} \|_{1}$ for all ${\bf u} \in \ell^{1}$ by Young's inequality.
\medskip

\subsection{Proof of the AAK-Theorem for Hankel matrices with finite rank}

Let us come back to the Hankel matrix ${\mathbf \Gamma}_{\mathbf f}$ of rank $N$ generated by the the sequence ${\mathbf f}$ of the form (\ref{eq:expsum})
with $1 > |z_{1}| \ge \cdots \ge |z_{N}| >0$, and with con-eigenvectors ${\mathbf v}^{(l)}$, $l=0, \ldots , N-1$ corresponding to the nonzero con-eigenvalues (resp.\ singular values) $\sigma_{0} \ge  \sigma_{1} \ldots \ge \sigma_{N-1} >0$.
As shown in (\ref{con}), the  Laurent polynomial corresponding to ${\mathbf v}^{(l)}$ has the form
$$ P_{{\mathbf v}^{(l)}}(z) = \sum_{j=0}^{\infty} v_{j}^{(l)} z^{j} 
= \sum_{j=1}^{N} \frac{a_{j} P_{\overline{\mathbf v}^{(l)}}(z_{j})}{1-z_{j}z} =\frac{{q}^{(l)}(z)}{z^{N} P(z^{-1})}$$
with ${q}^{(l)}(z)$ being a polynomial of degree $N-1$ and with the Prony polynomial $P(z)$ in (\ref{prony}).

We want to show now that for each single nonzero singular value $\sigma_{K}$ of $\G_{\mathbf f}$ the Laurent series of the corresponding con-eigenvector ${\mathbf v}^{(K)}$ possesses exactly $K$ zeros in $\D$, and moreover, that these zeros $z_{1}^{(K)}, \ldots , z_{K}^{(K)}$ can be used to construct a new Hankel matrix $\G_{\tilde{\mathbf f}}$ of rank $K$ with $\tilde{\mathbf f}$ of the form (\ref{eq:AAK1}) and $\| \G_{{\mathbf f}} -\G_{\tilde{\mathbf f}} \| = \sigma_{K}$.
\medskip

The above relation implies that the zeros of $P_{{\mathbf v}^{(K)}}(z)$ are the $N-1$ zeros of ${q}^{(K)}(z)$. 
Let $n_{K}$ denote the number of zeros of ${q}^{(K)}$ in $\D$, where $0 \le n_{K} \le N-1$.
We show first that $n_{K} \le K$.

We can write
$$ {q}^{(K)}(z) = \prod_{j=1}^{n_{K}} (z-\alpha_{j}) \, \prod_{j=n_{K}+1}^{N-1} (z-\beta_{j})$$
with $|\alpha_{j}| < 1$ and $|\beta_{j}| \ge 1$. Now, let
\begin{equation}\label{u} P_{{\mathbf u}^{(K)}}(z) := \frac{1}{\sigma_{K}} \frac{ \prod\limits_{j=1}^{n_{K}} (1-\overline{\alpha}_{j}z) \, \prod\limits_{j=n_{K}+1}^{N-1} (z-\beta_{j})}{z^{N} p(z^{-1}) }.
\end{equation}
Then  $P_{{\mathbf u}^{(K)}}(z)$ has no zeros in $\D$ and defines a sequence ${\mathbf u}^{(K)} = (u^{(l)}_{r})_{r=0}^{\infty} \in \ell^{1}$ via
$$ P_{{\mathbf u}^{(K)}}(z) = \sum_{r=0}^{\infty} u_{r}^{(K)} z^{r}, \quad z\in\D. $$
Denoting by 
\begin{equation}
\label{blal}
B^{(K)}(e^{i\omega}) := \prod_{j=1}^{n_{K}} \frac{e^{i\omega} - \alpha_{j}}{1- \overline{\alpha}_{j} e^{i\omega}} = \sum_{k=0}^{\infty} b_{k}^{(K)} e^{i\omega k}
\end{equation}
the Blaschke product on the unit circle, it follows that 
$$ P_{{\mathbf v}^{(K)}} (e^{i\omega}) = B^{(K)}(e^{i\omega}) \, P_{{\mathbf u}^{(K)}} (e^{i\omega}),$$
or  equivalently,
\begin{equation}\label{toep1} {\mathbf v}^{(K)} = {\mathbf T}_{{\mathbf b}^{(K)}} {\mathbf u}^{(K)}, \end{equation}
where ${\mathbf T}_{{\mathbf b}^{(K)}}$ denotes the triangular Toeplitz matrix  corresponding to $B^{(K)}(e^{i\omega})$. Now we can show

\begin{theorem}\label{aak2}$~$
Let ${\mathbf \Gamma}_{\mathbf f}$  be the infinite Hankel matrix of finite rank $N$ generated by ${\mathbf f} = (f_{k})_{k=0}^{\infty}$ of the form {\rm (\ref{eq:expsum})} and with nonzero singular values $\sigma_{0} \ge \sigma_{1} \ge \ldots 
\ge \sigma_{N-1} > 0$.  Further, let $(\sigma_{K}, {\mathbf v}^{(K)})$ be the $K$-th con-eigenpair of ${\mathbf \Gamma}_{\mathbf f}$  with $\sigma_{K} \neq \sigma_{k}$ for $K \neq k$. Let ${\mathbf T}_{{\mathbf b}^{(K)}}$  be the Toeplitz matrix corresponding to ${\mathbf v}^{(K)}$ being defined by $B^{(K)}(e^{i\omega})$ as in {\rm (\ref{blal})}.
Then 
${\mathbf \Gamma}_{\mathbf f} {\mathbf T}_{{\mathbf b}^{(K)}}$ possesses the singular value $\sigma_{K}$ with multiplicity at least $n_{K}+1$, where $n_{K}$ denotes the number of zeros of $P_{{\mathbf v}^{(K)}}$ in $\D$. In particular, we have $n_{K} \le K$. 
\end{theorem}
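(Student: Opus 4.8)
Throughout the plan write ${\mathbf G} := {\mathbf\Gamma}_{\mathbf f}{\mathbf T}_{{\mathbf b}^{(K)}}$, which is a bounded Hankel operator on $\ell^2$ by Lemma \ref{TB}(2). The multiplicity of $\sigma_K$ as a singular value of ${\mathbf G}$ is by definition the multiplicity of $\sigma_K^2$ as an eigenvalue of the selfadjoint operator ${\mathbf G}^*{\mathbf G} = {\mathbf T}_{{\mathbf b}^{(K)}}^*{\mathbf\Gamma}_{\mathbf f}^*{\mathbf\Gamma}_{\mathbf f}{\mathbf T}_{{\mathbf b}^{(K)}}$, so the plan is to exhibit an $(n_K+1)$-dimensional eigenspace for $\sigma_K^2$. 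The first eigenvector is ${\mathbf u}^{(K)}$ itself: by Lemma \ref{toeplitz}(1) we have ${\mathbf T}_{{\mathbf b}^{(K)}}^*{\mathbf T}_{{\mathbf b}^{(K)}} = {\mathbf I}$, by (\ref{toep1}) ${\mathbf T}_{{\mathbf b}^{(K)}}{\mathbf u}^{(K)} = {\mathbf v}^{(K)}$, and ${\mathbf v}^{(K)}$ is a singular vector of ${\mathbf\Gamma}_{\mathbf f}$, i.e. ${\mathbf\Gamma}_{\mathbf f}^*{\mathbf\Gamma}_{\mathbf f}{\mathbf v}^{(K)} = \sigma_K^2{\mathbf v}^{(K)}$. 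Hence ${\mathbf G}^*{\mathbf G}{\mathbf u}^{(K)} = {\mathbf T}_{{\mathbf b}^{(K)}}^*{\mathbf\Gamma}_{\mathbf f}^*{\mathbf\Gamma}_{\mathbf f}{\mathbf v}^{(K)} = \sigma_K^2{\mathbf T}_{{\mathbf b}^{(K)}}^*{\mathbf v}^{(K)} = \sigma_K^2{\mathbf u}^{(K)}$, so $\sigma_K$ is indeed a singular value of ${\mathbf G}$.

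For the remaining $n_K$ eigenvectors I would exploit the geometry of the isometry ${\mathbf T}_{{\mathbf b}^{(K)}}$. Its range is closed, ${\mathbf P} := {\mathbf T}_{{\mathbf b}^{(K)}}{\mathbf T}_{{\mathbf b}^{(K)}}^*$ is the orthogonal projection onto $\operatorname{ran}{\mathbf T}_{{\mathbf b}^{(K)}}$, and the complementary model space ${\cal K} := (\operatorname{ran}{\mathbf T}_{{\mathbf b}^{(K)}})^\perp$ has dimension $n_K$, the number of Blaschke factors in $B^{(K)}$ in (\ref{blal}). Writing ${\mathbf y} = {\mathbf T}_{{\mathbf b}^{(K)}}{\mathbf x}$ and applying the left inverse, the relation ${\mathbf G}^*{\mathbf G}{\mathbf x} = \sigma_K^2{\mathbf x}$ becomes $({\mathbf\Gamma}_{\mathbf f}^*{\mathbf\Gamma}_{\mathbf f}-\sigma_K^2){\mathbf y}\in{\cal K}$ for ${\mathbf y}\in\operatorname{ran}{\mathbf T}_{{\mathbf b}^{(K)}}$. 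Thus the eigenspace is isometric to $\widetilde E := \{{\mathbf y}\in\operatorname{ran}{\mathbf T}_{{\mathbf b}^{(K)}} : ({\mathbf\Gamma}_{\mathbf f}^*{\mathbf\Gamma}_{\mathbf f}-\sigma_K^2){\mathbf y}\in{\cal K}\}$, which I would analyse through the linear map $T\colon\widetilde E\to{\cal K}$, $T{\mathbf y} = ({\mathbf\Gamma}_{\mathbf f}^*{\mathbf\Gamma}_{\mathbf f}-\sigma_K^2){\mathbf y}$. Since $\sigma_K$ is simple, $\Ker({\mathbf\Gamma}_{\mathbf f}^*{\mathbf\Gamma}_{\mathbf f}-\sigma_K^2) = \operatorname{span}\{{\mathbf v}^{(K)}\}$ and ${\mathbf v}^{(K)}\in\operatorname{ran}{\mathbf T}_{{\mathbf b}^{(K)}}$, so $\ker T = \operatorname{span}\{{\mathbf v}^{(K)}\}$ is one-dimensional and rank--nullity gives $\dim\widetilde E = 1 + \dim\operatorname{im}T$. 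Consequently the asserted bound $\dim\widetilde E\ge n_K+1$ is equivalent to surjectivity of $T$ onto the $n_K$-dimensional space ${\cal K}$.

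The surjectivity of $T$ is the heart of the matter and the step I expect to be hardest. Solvability of $({\mathbf\Gamma}_{\mathbf f}^*{\mathbf\Gamma}_{\mathbf f}-\sigma_K^2){\mathbf y} = \kappa$ in $\ell^2$ for $\kappa\in{\cal K}$ is automatic, since $\Ker({\mathbf\Gamma}_{\mathbf f}^*{\mathbf\Gamma}_{\mathbf f}-\sigma_K^2) = \operatorname{span}\{{\mathbf v}^{(K)}\}\subset\operatorname{ran}{\mathbf T}_{{\mathbf b}^{(K)}}$ is orthogonal to ${\cal K}$; the difficulty is to place the solution ${\mathbf y}$ inside $\operatorname{ran}{\mathbf T}_{{\mathbf b}^{(K)}}$, which is precisely where the special choice of the $\alpha_j$ as the zeros of $P_{{\mathbf v}^{(K)}}$ in $\D$ must enter (for a generic Blaschke product $T$ would fail to be onto). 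To make this tractable I would pass to the finite reduction underlying Theorem \ref{reduction}: since $({\mathbf\Gamma}_{\mathbf f}{\mathbf b}^{(K)})_k = \sum_{\nu=1}^N a_\nu B^{(K)}(z_\nu)\,z_\nu^k$, the operator ${\mathbf G}$ is again generated by an exponential sum with the same nodes $z_\nu$ and modified coefficients $a_\nu B^{(K)}(z_\nu)$, so its nonzero con-eigenvalues are those of the $N\times N$ matrix $\operatorname{diag}(B^{(K)}(z_\nu))\,{\mathbf A}_N{\mathbf Z}_N$. On this finite level the count becomes a statement about the rational representation (\ref{con}): the $n_K$ conditions $P_{{\mathbf v}^{(K)}}(\alpha_j) = 0$ should produce $n_K$ further independent solutions of the finite eigenvalue equation at $\sigma_K$, which together with ${\mathbf u}^{(K)}$ fill out the $(n_K+1)$-dimensional eigenspace; establishing this independence, using the Cauchy-type structure of ${\mathbf Z}_N$ and the vanishing of $P_{{\mathbf v}^{(K)}}$ at the $\alpha_j$, is the main obstacle.

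Finally, the bound $n_K\le K$ follows at once once the multiplicity is known. By Lemma \ref{toeplitz}(3) we have $\sigma_n({\mathbf G})\le\sigma_n({\mathbf\Gamma}_{\mathbf f})$ for all $n$, and since $\sigma_K$ is a simple singular value of ${\mathbf\Gamma}_{\mathbf f}$ there are exactly $K+1$ indices $n$ with $\sigma_n({\mathbf\Gamma}_{\mathbf f})\ge\sigma_K$, namely $n=0,\ldots,K$. Each of the (at least) $n_K+1$ copies of $\sigma_K$ among the singular values of ${\mathbf G}$ sits at an index $n$ with $\sigma_n({\mathbf\Gamma}_{\mathbf f})\ge\sigma_n({\mathbf G}) = \sigma_K$, hence $n\le K$; as there are at most $K+1$ such indices, $n_K+1\le K+1$, i.e. $n_K\le K$.
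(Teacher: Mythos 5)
Your plan is sound in its frame but has a genuine gap exactly where the theorem lives. What you actually establish is: (i) $\sigma_K$ is a singular value of ${\mathbf G} = {\mathbf \Gamma}_{\mathbf f}{\mathbf T}_{{\mathbf b}^{(K)}}$ with eigenvector ${\mathbf u}^{(K)}$ of ${\mathbf G}^{*}{\mathbf G}$ (correct); (ii) the multiplicity claim is equivalent to surjectivity of your map $T\colon \widetilde E \to {\cal K}$ (a clean reformulation, granting the standard fact that the model space ${\cal K}$ has dimension $n_K$); and (iii) $n_K \le K$ follows from the multiplicity claim via Lemma \ref{toeplitz}(3) (correct, and the same counting the paper uses). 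But the surjectivity of $T$ -- equivalently, the existence of $n_K$ further independent eigenvectors -- is never proven: you write that the conditions $P_{{\mathbf v}^{(K)}}(\alpha_j)=0$ ``should produce'' the required solutions and that establishing their independence ``is the main obstacle.'' That obstacle \emph{is} the theorem. With only step (i) in hand you get multiplicity at least one, hence no bound on $n_K$ whatsoever, and your final paragraph is then conditional on an unproven claim. Your proposed fallback through the finite matrix $\diag(B^{(K)}(z_\nu))\,{\mathbf A}_N{\mathbf Z}_N$ is likewise only a direction, not an argument.

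For comparison, the paper closes exactly this gap constructively. It introduces the partial Blaschke products $B_j^{(K)}(e^{i\omega}) = \prod_{k=1}^{j}\frac{e^{i\omega}-\alpha_k}{1-\overline{\alpha}_k e^{i\omega}}$, $j=0,\ldots,n_K$, with coefficient sequences ${\mathbf b}_j^{(K)}$, and shows by direct computation -- using the factorization ${\mathbf T}_{{\mathbf b}^{(K)}} = {\mathbf T}_{{\mathbf b}^{(K)}/{\mathbf b}_j^{(K)}}{\mathbf T}_{{\mathbf b}_j^{(K)}}$, the isometry property of Lemma \ref{toeplitz}(1), the relation ${\mathbf \Gamma}_{\mathbf f}{\mathbf T}_{\mathbf g} = {\mathbf T}_{\mathbf g}^{T}{\mathbf \Gamma}_{\mathbf f}$ of Lemma \ref{TB}(3), and the con-eigen equation for ${\mathbf v}^{(K)}$ -- that every vector ${\mathbf T}_{{\mathbf b}_j^{(K)}}^{*}{\mathbf v}^{(K)}$ is an eigenvector of ${\mathbf G}^{*}{\mathbf G}$ with eigenvalue $\sigma_K^{2}$. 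Linear independence of these $n_K+1$ vectors is then exactly where the choice of the $\alpha_j$ as zeros of $P_{{\mathbf v}^{(K)}}$ enters, as you predicted, but via Theorem \ref{zerovectors} rather than the Cauchy structure of ${\mathbf Z}_N$: by Lemma \ref{TB}(3), a vanishing combination forces $\sum_{j=0}^{n_K}\gamma_j{\mathbf b}_j^{(K)}$ into the kernel of $\G_{{\mathbf v}^{(K)}}$, and since ${\mathbf v}^{(K)}$ is itself an exponential sum with nodes $z_1,\ldots,z_N$ by (\ref{eq:coneigenred}), Theorem \ref{zerovectors} would make the rational function $\gamma_0 + \sum_{j=1}^{n_K}\gamma_j\prod_{k=1}^{j}\frac{z-\alpha_k}{1-\overline{\alpha}_k z}$ vanish at all $N$ nodes -- impossible for nonzero $\gamma_j$, since its numerator has degree at most $n_K \le N-1$. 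Pushed forward by ${\mathbf T}_{{\mathbf b}^{(K)}}$, these vectors are precisely the elements of $\widetilde E$ whose images would witness the surjectivity of your $T$; the partial Blaschke products are the missing idea in your plan.
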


\begin{proof}$~$
1. Considering the Blaschke product in (\ref{blal}), we define its partial products by
$$ B^{(K)}_{j}(e^{i\omega}) := \sum_{r=0}^{\infty} (b_{j}^{(K)})_{r} e^{i\omega r} = \prod_{k=1}^{j} \frac{e^{i\omega}-\alpha_{k}}{1- \overline{\alpha}_{k} e^{i\omega}}, \qquad j=1, \ldots , n_{K}, $$
 where
$\alpha_{1}, \ldots , \alpha_{n_{K}}$ are the zeros of $P_{{\mathbf v}^{(K)}}(z)$ inside $\D$.
We employ the notation ${\mathbf T}_{{\mathbf b}^{(K)}/{\mathbf b}_{j}^{(K)}}$ for the triangular Toeplitz matrix
generated by the sequence of Fourier coefficients of $\prod_{k=j+1}^{n_{K}} \frac{e^{i\omega}-\alpha_{k}}{1- \overline{\alpha}_{k} e^{i\omega}}$
such that 
$$ {\mathbf T}_{{\mathbf b}^{(K)}} = {\mathbf T}_{{\mathbf b}^{(K)}/{\mathbf b}_{j}^{(K)}} \cdot {\mathbf T}_{{\mathbf b}_{j}^{(K)}}. $$
We show now that the $n_{K}+ 1$ vectors ${\mathbf v}^{(K)}, \, {\mathbf T}_{{\mathbf b}_{1}^{(K)}}^{*} {\mathbf v}^{(K)}, \ldots , {\mathbf T}_{{\mathbf b}_{n_{K}}^{(K)}}^{*} {\mathbf v}^{(K)}$ are linearly independent singular vectors of 
${\mathbf \Gamma}_{\mathbf f} {\mathbf T}_{{\mathbf b}^{(K)}}$ to the singular value $\sigma_{K}$.
For $j =0, \ldots , n_{K}$ (with ${\mathbf T}_{{\mathbf b}_{0}^{(K)}} := {\mathbf I}$) we obtain by Lemma \ref{TB} and Lemma \ref{toeplitz}
\begin{eqnarray*}
& &  ({\mathbf \Gamma}_{\mathbf f} {\mathbf T}_{{\mathbf b}^{(K)}})^{*} {\mathbf \Gamma}_{\mathbf f} {\mathbf T}_{{\mathbf b}^{(K)}} {\mathbf T}_{{\mathbf b}_{j}^{(K)}}^{*} {\mathbf v}^{(K)} \\
&=& {\mathbf T}_{{\mathbf b}^{(K)}}^{*} {\mathbf \Gamma}_{\mathbf f}^{*} {\mathbf \Gamma}_{\mathbf f} {\mathbf T}_{{\mathbf b}^{(K)}/{\mathbf b}^{(K)}_{j}} {\mathbf T}_{{\mathbf b}_{j}^{(K)}} {\mathbf T}_{{\mathbf b}_{j}^{(K)}}^{*} {\mathbf T}_{{\mathbf b}_{j}^{(K)}} {\mathbf T}_{{\mathbf b}^{(K)}/{\mathbf b}^{(K)}_{j}} {\mathbf u}^{(K)} \\
&=& {\mathbf T}_{{\mathbf b}^{(K)}}^{*} {\mathbf \Gamma}_{\mathbf f}^{*} {\mathbf T}_{{\mathbf b}^{(K)}/{\mathbf b}^{(K)}_{j}}^{T} {\mathbf \Gamma}_{\mathbf f} {\mathbf T}_{{\mathbf b}_{j}^{(K)}} {\mathbf T}_{{\mathbf b}^{(K)}/{\mathbf b}^{(K)}_{j}} {\mathbf u}^{(K)} \\
&=& {\mathbf T}_{{\mathbf b}^{(K)}}^{*} {\mathbf \Gamma}_{\mathbf f}^{*} {\mathbf T}_{{\mathbf b}^{(K)}/{\mathbf b}^{(K)}_{j}}^{T} {\mathbf \Gamma}_{\mathbf f} {\mathbf v}^{(K)} \\
&=& \sigma_{K} {\mathbf T}_{{\mathbf b}^{(K)}}^{*} {\mathbf \Gamma}_{\mathbf f}^{*} {\mathbf T}_{{\mathbf b}^{(K)}/{\mathbf b}^{(K)}_{j}}^{T}  \overline{\mathbf v}^{(K)} \\
&=& \sigma_{K} {\mathbf T}_{{\mathbf b}^{(K)}}^{*} {\mathbf \Gamma}_{\mathbf f}^{*} \overline{{\mathbf T}_{{\mathbf b}^{(K)}/{\mathbf b}^{(K)}_{j}}^{*}}  \overline{{\mathbf T}_{{\mathbf b}^{(K)}/{\mathbf b}^{(K)}_{j}}} \overline{{\mathbf T}_{{\mathbf b}^{(K)}_{j}}} \overline{\mathbf u}^{(K)} \\
&=& \sigma_{K} {\mathbf T}_{{\mathbf b}^{(K)}}^{*} {\mathbf \Gamma}_{\mathbf f}^{*} \overline{{\mathbf T}_{{\mathbf b}^{(K)}_{j}}} \overline{\mathbf u}^{(K)} \\
&=& \sigma_{K}  {\mathbf \Gamma}_{\mathbf f}^{*} \overline{{\mathbf T}_{{\mathbf b}_{j}^{(K)}}} \overline{{\mathbf T}_{{\mathbf b}^{(K)}/{\mathbf b}_{j}^{(K)}}} \overline{{\mathbf T}_{{\mathbf b}^{(K)}_{j}}} \overline{\mathbf u}^{(K)} \\
&=& \sigma_{K} {\mathbf T}_{{\mathbf b}_{j}^{(K)}}^{*} {\mathbf \Gamma}_{\mathbf f}^{*} \overline{\mathbf v}^{(K)} 
= \sigma_{K}^{2} {\mathbf T}_{{\mathbf b}_{j}^{(K)}}^{*} {\mathbf v}^{(K)}.
\end{eqnarray*}
Moreover, the vectors ${\mathbf v}^{(K)}, \, {\mathbf T}_{{\mathbf b}_{1}^{(K)}}^{*} {\mathbf v}^{(K)}, \ldots , {\mathbf T}_{{\mathbf b}_{n_{K}}^{(K)}}^{*} {\mathbf v}^{(K)}$ are linearly independent, since 
$$ \sum_{j=0}^{n_{K}} \gamma_{j} {\mathbf T}_{{\mathbf b}_{j}^{(K)}}^{*} {\mathbf v}^{(K)} = {\mathbf 0}$$
is by Lemma \ref{TB}(3) equivalent with 
$$ \G_{{\mathbf v}^{(K)}} \left( \sum_{j=0}^{n_{K}} \gamma_{j} {\mathbf b}_{j}^{(K)}\right) = {\mathbf 0},$$
i.e., $\left( \sum_{j=0}^{n_{K}} \gamma_{j} {\mathbf b}_{j}^{(K)}\right)$ is a zero-(con)-eigenvector of $\G_{{\mathbf v}^{(K)}}$.
Thus, by Theorem \ref{zerovectors} and (\ref{eq:coneigenred}), the Laurent polynomial 
$$ \sum_{r=0}^{\infty} \sum_{j=0}^{n_{K}} \gamma_{j} ({\mathbf b}_{j}^{(K)})_{r} z^{r} = \gamma_{0} + \sum_{j=1}^{n_{K}} \gamma_{j} \prod_{k=1}^{j} \frac{z-\alpha_{k}}{1- \overline{\alpha}_{k} z} $$
possesses all zeros $z_{1}, \ldots, z_{N}$. Since $n_{K} \le N-1$, we conclude that $\gamma_{0} = \ldots = \gamma_{n_{K}} =0$.
Therefore, ${\mathbf \Gamma}_{\mathbf f} {\mathbf T}_{{\mathbf b}^{(K)}}$ possesses the singular value $\sigma_{K}$ with multiplicity at least $n_{K}+1$.
On the  other hand, since $\sigma_{K}({\mathbf \Gamma}_{\mathbf f} {\mathbf T}_{{\mathbf b}^{(K)}}) \le \sigma_{K}({\mathbf \Gamma}_{\mathbf f})$ by Lemma \ref{toeplitz}, it follows that $n_{K} \le K$.
\end{proof}
\medskip

In the next step, we construct a sequence $\tilde{\mathbf f}^{(K)} = {\mathbf f} - {\mathbf g}^{(K)}$
such that $\rank({\mathbf \Gamma}_{{\mathbf f} - {\mathbf g}^{(K)}}) = K$ and $\| {\mathbf \Gamma}_{{\mathbf g}^{(K)}} \| = \| {\mathbf \Gamma}_{{\bf f} - \tilde{\bf f}^{(K)}} \| = \sigma_{K}$. 

Let $ {\mathbf \Gamma}_{{\mathbf g}^{(K)}}$ be the Hankel matrix generated by ${\mathbf g}^{(K)} = (g_{l}^{(K)})_{l=0}^{\infty}$ with 
\begin{equation} \label{g} 
g_{l}^{(K)} := \frac{\sigma_{K}}{2\pi} \int_{0}^{2\pi} \frac{P_{{\mathbf v}^{(K)}} ({\mathrm e}^{{\mathrm i} t})}{P_{\overline{\mathbf v}^{(K)}}({\mathrm e}^{-{\mathrm i} t})} {\mathrm e}^{-{\mathrm i} t l} {\mathrm d} t 
\end{equation}
for $l \in \N_{0}$.
Then by Lemma \ref{lem2} it follows that $\| {\mathbf \Gamma}_{{\mathbf g}^{(K)}} \|_{\ell^{2} \to \ell^{2}} = \sigma_{K}$ and ${\mathbf \Gamma}_{{\mathbf g}^{(K)}} \overline{\mathbf v}^{(K)} = \sigma_{K} {\mathbf v}^{(K)}$.

Now we can show

\begin{theorem}\label{theo}
Let ${\mathbf \Gamma}_{\mathbf f}$ be a Hankel operator of finite rank $N$ generated by ${\mathbf f} = (f_{k})_{k=0}^{\infty}$  with $f_{k}$ of the form {\rm (\ref{eq:expsum})}
with the nonzero con-eigenvalues $\sigma_{0} \ge \sigma_{1} \ge \ldots \ge \sigma_{N-1} > 0$.
Further, let $(\sigma_{K}, {\mathbf v}^{(K)})$ be the $K$-th con-eigenpair of ${\mathbf \Gamma}_{{\mathbf f}}$ with $\sigma_{K} \neq  \sigma_{k}$ for $K \neq k$. Then the shift-invariant space
$$ {\cal S}_{\overline{\mathbf v}^{(K)}} := {\rm clos}_{\ell^{2}} {\rm span} \,  \{ S^{l} \overline{\mathbf v}^{(K)} : l \in \N_{0} \} $$
has at least  co-dimension $K$ in $\ell_{2}$, and the matrix ${\mathbf \Gamma}_{{\mathbf f- \mathbf g}^{(K)}}$ with ${\mathbf g}^{(K)}$ determined by {\rm (\ref{g})} has at least rank $K$.
Moreover, for the operator norm of ${\mathbf \Gamma}_{{\mathbf g}^{(K)}}$ we have
$$ \| {\mathbf \Gamma}_{{\mathbf g}^{(K)}} \|_{\ell^{2} \to \ell^{2}} = \| {\mathbf \Gamma}_{{\mathbf f}} - {\mathbf \Gamma}_{{\mathbf f- \mathbf g}^{(K)}} \|_{\ell^{2} \to \ell^{2}} = \sigma_{K}. $$
\end{theorem}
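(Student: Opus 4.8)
The plan is to prove the three assertions in the natural logical order, which is essentially the reverse of the order in which they are listed: first the norm equality, which drives the rank lower bound, which in turn yields the codimension bound. The norm equality is almost immediate from the construction. Since the Hankel assignment is linear we have $\mathbf{\Gamma}_{\mathbf{f}}-\mathbf{\Gamma}_{\mathbf{f}-\mathbf{g}^{(K)}}=\mathbf{\Gamma}_{\mathbf{g}^{(K)}}$, and because $\mathbf{g}^{(K)}$ in (\ref{g}) is exactly $\sigma_{K}$ times the sequence produced by Lemma \ref{lem2} from $\mathbf{v}^{(K)}$, that lemma delivers both $\|\mathbf{\Gamma}_{\mathbf{g}^{(K)}}\|_{\ell^{2}\to\ell^{2}}=\sigma_{K}$ and $\mathbf{\Gamma}_{\mathbf{g}^{(K)}}\overline{\mathbf{v}}^{(K)}=\sigma_{K}\mathbf{v}^{(K)}$, as already recorded just before the theorem. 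This settles the displayed norm identity.

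The key observation is that $\overline{\mathbf{v}}^{(K)}$ lies in the kernel of $\mathbf{\Gamma}_{\mathbf{f}-\mathbf{g}^{(K)}}$. Indeed, combining the con-eigenpair relation $\mathbf{\Gamma}_{\mathbf{f}}\overline{\mathbf{v}}^{(K)}=\sigma_{K}\mathbf{v}^{(K)}$ with the identity $\mathbf{\Gamma}_{\mathbf{g}^{(K)}}\overline{\mathbf{v}}^{(K)}=\sigma_{K}\mathbf{v}^{(K)}$ just recalled, the two contributions cancel and $\mathbf{\Gamma}_{\mathbf{f}-\mathbf{g}^{(K)}}\overline{\mathbf{v}}^{(K)}=\mathbf{0}$. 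Since $\ker(\mathbf{\Gamma}_{\mathbf{f}-\mathbf{g}^{(K)}})$ is $\sh$-invariant by Lemma \ref{shiftinv}(i), the entire shift-invariant subspace generated by $\overline{\mathbf{v}}^{(K)}$ is contained in it, so that $\mathcal{S}_{\overline{\mathbf{v}}^{(K)}}\subseteq\ker(\mathbf{\Gamma}_{\mathbf{f}-\mathbf{g}^{(K)}})$.

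Next I would establish the rank bound. Write $\mathbf{R}:=\mathbf{\Gamma}_{\mathbf{f}-\mathbf{g}^{(K)}}$ and suppose, for contradiction, that $\rank\mathbf{R}\le K-1$. By the best-approximation characterization of singular values already used in the proof of Lemma \ref{toeplitz}(3), any operator of rank at most $K-1$ satisfies $\|\mathbf{\Gamma}_{\mathbf{f}}-\mathbf{R}\|_{\ell^{2}\to\ell^{2}}\ge\sigma_{K-1}$; but $\sigma_{K}$ is assumed simple, so $\sigma_{K-1}>\sigma_{K}=\|\mathbf{\Gamma}_{\mathbf{f}}-\mathbf{R}\|_{\ell^{2}\to\ell^{2}}$, a contradiction. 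Hence $\rank\mathbf{R}\ge K$. The codimension bound then follows from the inclusion of the previous paragraph: since $\mathcal{S}_{\overline{\mathbf{v}}^{(K)}}\subseteq\ker\mathbf{R}$, we get $\operatorname{codim}\mathcal{S}_{\overline{\mathbf{v}}^{(K)}}\ge\operatorname{codim}\ker\mathbf{R}=\rank\mathbf{R}\ge K$, where $\operatorname{codim}\ker\mathbf{R}=\rank\mathbf{R}$ is the elementary first-isomorphism-theorem identity $\ell^{2}/\ker\mathbf{R}\cong\operatorname{im}\mathbf{R}$.

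The main obstacle is the rank lower bound: this is the single place where the simplicity hypothesis $\sigma_{K}\neq\sigma_{k}$ for $k\neq K$ is essential, and where one must legitimately invoke the Eckart--Young--Mirsky characterization of singular values for compact operators on $\ell^{2}$ (permissible since $\mathbf{\Gamma}_{\mathbf{f}}$ has finite rank $N$). It is worth stressing that this argument yields only the stated lower bounds; the matching upper bound $n_{K}\le K$ from Theorem \ref{aak2} is what will subsequently pin both the rank and the codimension down to exactly $K$. A minor preliminary to double-check is that Lemma \ref{lem2} genuinely applies here, i.e.\ that $P_{\mathbf{v}^{(K)}}$ does not vanish on the unit circle, so that $\mathbf{g}^{(K)}$ in (\ref{g}) and its operator norm are well defined.
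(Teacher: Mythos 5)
Your proof is correct, and its key step takes a genuinely different route from the paper's. The preliminaries coincide: like the paper, you obtain $\| \G_{{\mathbf g}^{(K)}} \|_{\ell^{2} \to \ell^{2}} = \sigma_{K}$ and $\G_{{\mathbf g}^{(K)}} \overline{\mathbf v}^{(K)} = \sigma_{K} {\mathbf v}^{(K)}$ from Lemma \ref{lem2} (the paper re-derives the latter by direct computation), hence $\G_{{\mathbf f}-{\mathbf g}^{(K)}} \overline{\mathbf v}^{(K)} = {\mathbf 0}$ and, by Lemma \ref{shiftinv}, ${\cal S}_{\overline{\mathbf v}^{(K)}} \subseteq \Ker (\G_{{\mathbf f}-{\mathbf g}^{(K)}})$. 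For the lower bounds, however, the paper argues in the opposite direction from you: it tests $\G_{{\mathbf f}-{\mathbf g}^{(K)}}$ on the con-eigenvectors $\overline{\mathbf v}^{(0)}, \ldots , \overline{\mathbf v}^{(K-1)}$ belonging to the larger singular values and uses the reverse triangle inequality, $\| \G_{{\mathbf f}-{\mathbf g}^{(K)}} \overline{\mathbf v}^{(r)} \|_{2} \ge (\sigma_{r}-\sigma_{K}) \| {\mathbf v}^{(r)} \|_{2} > 0$, to conclude that these $K$ vectors escape the kernel, getting the codimension bound first and the rank bound as a consequence; you get the rank bound first, by contradiction from the approximation-number (Eckart--Young--Mirsky) characterization, and then deduce the codimension bound from $\rank = \operatorname{codim} \Ker$. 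Your invocation of that characterization is legitimate within the paper's framework, since the paper itself adopts exactly this characterization as the definition of singular values in the proof of Lemma \ref{toeplitz}(3), and $\G_{\mathbf f}$ is compact, being of finite rank $N$. Your route even has a small advantage: as literally written, the paper's inference ``$K$ linearly independent vectors avoid the kernel, hence the codimension is at least $K$'' requires the stronger fact that the \emph{span} of $\overline{\mathbf v}^{(0)}, \ldots , \overline{\mathbf v}^{(K-1)}$ meets the kernel trivially (this does follow from the same estimate applied to linear combinations, using orthogonality of singular vectors to distinct singular values, but the paper does not spell it out), whereas deriving codimension from rank via the first isomorphism theorem bypasses this point entirely. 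Both arguments use the simplicity hypothesis in the same essential way, through the strict inequality $\sigma_{K-1} > \sigma_{K}$, and the caveat you flag at the end --- that $P_{{\mathbf v}^{(K)}}$ must not vanish on the unit circle for (\ref{g}) and Lemma \ref{lem2} to apply --- is a hypothesis the paper likewise takes for granted, addressing it only in the remark following the theorem.
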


\begin{proof}
1. Similarly as in the proof of  Lemma \ref{lem2} we observe that 
 \begin{eqnarray*}
 ({\mathbf \Gamma}_{\mathbf g^{(K)}} \overline{\mathbf v}^{(K)} )_{l} &=& \sum_{j=0}^{\infty} g_{l+j} \bar{v}_{j}^{(K)} = \sum_{j=0}^{\infty} \frac{\sigma_{K}}{2\pi} \int_{0}^{2\pi} \frac{P_{{\mathbf v}^{(K)}} ({\mathrm e}^{{\mathrm i} t})}{P_{\overline{\mathbf v}^{(K)}}({\mathrm e}^{-{\mathrm i} t})} {\mathrm e}^{-{\mathrm i} t (l+j)} 
 \bar{v}_{j}^{(K)} {\mathrm d} t \\
 &=& \frac{\sigma_{K}}{2\pi} \int_{0}^{2\pi} \frac{P_{{\mathbf v}^{(K)}} ({\mathrm e}^{{\mathrm i} t})}{P_{\overline{\mathbf v}^{(K)}}({\mathrm e}^{-{\mathrm i} t})} {\mathrm e}^{-{\mathrm i} t l}\sum_{j=0}^{\infty} \bar{v}_{j}^{(K)} {\mathrm e}^{-{\mathrm i} t j} {\mathrm d} t \\
&=& \frac{\sigma_{K}}{2\pi} \int_{0}^{2\pi} P_{\mathbf v}^{(K)} ({\mathrm e}^{{\mathrm i} t}) {\mathrm e}^{-{\mathrm i} t l} {\mathrm d} t = \sigma_{K} v_{l}^{(K)},
 \end{eqnarray*}
 for all $k \in \N_{0}$ and thus ${\mathbf \Gamma}_{\mathbf g^{(K)}} \overline{\mathbf v}^{(K)} = \sigma {\mathbf v}^{(K)}$, resp.\ ${\mathbf \Gamma}_{{\mathbf f}-{\mathbf g}^{(K)}} \overline{\mathbf v}^{(K)} = {\mathbf 0}$. 
Moreover, by Lemma \ref{lem2} it follows that $\| {\mathbf \Gamma}_{{\mathbf g}^{(K)}} \|_{\ell^{2} \to \ell^{2}} = \sigma_{K}$. 

\noindent
We consider now the operator ${\mathbf \Gamma}_{{\mathbf f}-{\mathbf g}^{(K)}}$. By Lemma \ref{shiftinv}, the shift-invariant space ${\cal S}_{\overline{\mathbf v}^{(K)}}$  is a subset of ${\Ker} \, {\mathbf \Gamma}_{{\mathbf f}-{\mathbf g}^{(K)}}$. 
On the other hand, we observe that  for $r=0, \ldots , K-1$,
\begin{eqnarray*}
\| {\mathbf \Gamma}_{{\mathbf f}-{\mathbf g}^{(K)}} \overline{\mathbf v}^{(r)} \|_{{2}} &=& 
\| {\mathbf \Gamma}_{\mathbf f} \overline{\mathbf v}^{(r)} - {\mathbf \Gamma}_{{\mathbf g}^{(K)}} \overline{\mathbf v}^{(r)}\|_{{2}} \\
&\ge& | \| {\mathbf \Gamma}_{\mathbf f} \overline{\mathbf v}^{(r)} \|_{{2}} - \| {\mathbf \Gamma}_{{\mathbf g}^{(K)}} \overline{\mathbf v}^{(r)} \|_{{2}} | \ge (\sigma_{r} - \sigma_{k})  \| {\mathbf v}^{(r)} \|_{{2}}>0.
\end{eqnarray*}
Thus, the $K$ linearly independent con-eigenvectors $\overline{\mathbf v}^{(0)}, \ldots , \overline{\mathbf v}^{(K-1)}$ to the larger con-eigen\-val\-ues $\sigma_{0} \ge \ldots \ge \sigma_{k-1}$ are not contained in the kernel of ${\mathbf \Gamma}_{{\mathbf  f}-{\mathbf g}^{(K)}}$ and thus not in ${\cal S}_{\overline{\mathbf v}^{(K)}}$.
Hence, codim ${\cal S}_{\overline{\mathbf v}^{(K)}} \ge  K$, and ${\mathbf \Gamma}_{{\mathbf  f}-{\mathbf g}^{(K)}}$ possesses at least rank $K$.
\end{proof}

Since $P_{\overline{\bf v}^{(K)}}$ has by construction no zeros on the unit circle, it follows that ${\bf g}^{(K)}$ is also in $\ell^{1}$ and therefore also 
$\| {\mathbf \Gamma}_{{\mathbf g}^{(K)}} \|_{\ell^{1} \to \ell^{1}} = \sigma_{K}$.
\medskip

Finally, we conclude the following theorem.

\begin{theorem}\label{AAK1}
Let ${\mathbf \Gamma}_{\mathbf f}$  be the Hankel operator of finite rank $N$ generated by ${\mathbf f} = (f_{k})_{k=0}^{\infty}$ of the form {\rm (\ref{eq:expsum})} and with nonzero singular values $\sigma_{0} \ge \sigma_{1} \ge \ldots 
\ge \sigma_{N-1} > 0$.  Further, let $(\sigma_{K}, {\mathbf v}^{(K)})$ be the $K$-th con-eigenpair of ${\mathbf \Gamma}_{\mathbf f}$.
Then for each $K \in \{ 0, \ldots, N-1\}$ with $\sigma_{K}$ being a single singular value we have:
\begin{itemize}
\item[\textnormal{(1)}] The Laurent polynomial $P_{{\mathbf v}^{(K)}}(z)$ corresponding to the con-eigenvector ${\mathbf v}^{(K)}$ has exactly $K$ zeros 
$z_{1}^{(K)}, \ldots , z_{K}^{(K)}$ in $\D$ repeated according to multiplicity.
\item[\textnormal{(2)}] Considering the Hankel matrix ${\mathbf \Gamma}_{{\mathbf g}^{(K)}}$ given by the  sequence ${\mathbf g}^{(K)}= (g_{k})_{k=0}^{\infty}$ in {\rm (\ref{g})}, it follows that ${\mathbf  \Gamma}_{{\mathbf f} - {\mathbf g}^{(K)}}$ possesses the rank $K$ and 
$$ \| {\mathbf \Gamma}_{{\mathbf g}^{(K)}} \|_{\ell^{p} \to \ell^{p}} = \| {\mathbf \Gamma}_{{\mathbf f}} - {\mathbf  \Gamma}_{{\mathbf f} - {\mathbf g}^{(K)}} \|_{\ell^{p} \to \ell^{p}} = \sigma_{K}, \qquad p \in \{ 1,2 \}.
$$
\item[\textnormal{(3)}] The kernel of ${\mathbf \Gamma}_{{\mathbf f}- {\mathbf g}^{(K)}}$ has co-dimension $K$. If the zeros $z_1^{(K)},\ldots,z_K^{(K)}$ are pairwise different, then it satisfies 
\begin{eqnarray*}
 {\Ker} ({\mathbf \Gamma}_{{\mathbf f}- {\mathbf g}^{(K)}}) &=& {\cal S}_{\overline{\mathbf v}^{(K)}} 
 =  ({\rm clos}_{\ell^{2}} {\rm span} \{ ((z_{1}^{(K)})^{l})_{l=0}^{\infty}, \ldots , ((z_{K}^{(K)})^{l})_{l=0}^{\infty} \})^{\perp},
\end{eqnarray*}
where ${\cal S}_{\overline{\mathbf v}^{(K)}}:={\rm clos}_{\ell^{2}} {\rm span} \,  \{ S^{l} \overline{\mathbf v}^{(K)} : l \in \N_{0} \}$.
\end{itemize}
\end{theorem}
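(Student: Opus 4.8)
The plan is to read Theorem~\ref{AAK1} as the assembly of Theorems~\ref{aak2} and~\ref{theo}, so that the only genuinely new point is to pin down the \emph{exact} codimension of the shift-invariant space $\mathcal{S}_{\overline{\mathbf{v}}^{(K)}}$. From Theorem~\ref{theo} I already have $\mathcal{S}_{\overline{\mathbf{v}}^{(K)}}\subseteq\Ker(\mathbf{\Gamma}_{\mathbf{f}-\mathbf{g}^{(K)}})$ with $\textup{codim}\,\mathcal{S}_{\overline{\mathbf{v}}^{(K)}}\ge K$, $\rank(\mathbf{\Gamma}_{\mathbf{f}-\mathbf{g}^{(K)}})\ge K$, and $\|\mathbf{\Gamma}_{\mathbf{g}^{(K)}}\|_{\ell^{2}\to\ell^{2}}=\sigma_{K}$ (with the $\ell^{1}$ analogue recorded in the remark right after Theorem~\ref{theo}), while Theorem~\ref{aak2} gives the opposite inequality $n_{K}\le K$ for the number $n_{K}$ of zeros of $P_{\mathbf{v}^{(K)}}$ in $\D$. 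The whole statement will follow once I establish the single identity $\textup{codim}\,\mathcal{S}_{\overline{\mathbf{v}}^{(K)}}=n_{K}$.

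To prove this identity I would first translate orthogonality into a Fourier condition: a direct inner-product computation (as in Lemma~\ref{shiftinv}) shows that $\mathbf{h}\perp\mathcal{S}_{\overline{\mathbf{v}}^{(K)}}$ is equivalent to $P_{\mathbf{h}}(e^{\mathrm{i}t})\,P_{\mathbf{v}^{(K)}}(e^{-\mathrm{i}t})$ having vanishing non-negative Fourier coefficients. For the lower bound $\textup{codim}\ge n_{K}$ I note that each sequence $\bigl((z_{j}^{(K)})^{l}\bigr)_{l=0}^{\infty}$ attached to an in-disc zero $z_{j}^{(K)}$ of $P_{\mathbf{v}^{(K)}}$ satisfies this condition, because the test $\langle((z_{j}^{(K)})^{l})_{l},\sh^{m}\overline{\mathbf{v}}^{(K)}\rangle$ collapses to $(z_{j}^{(K)})^{m}\,P_{\mathbf{v}^{(K)}}(z_{j}^{(K)})=0$; distinctness of the zeros makes these $n_{K}$ sequences linearly independent by a Vandermonde argument. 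The reverse bound is the crux and the main obstacle: inserting the explicit rational form $P_{\mathbf{v}^{(K)}}(z)=q^{(K)}(z)/\bigl(z^{N}P(z^{-1})\bigr)$ from~(\ref{con}) and performing a partial-fraction decomposition of $P_{\mathbf{v}^{(K)}}(1/z)$ (whose only poles are the nodes $z_{j}\in\D$), the vanishing-projection condition forces $P_{\mathbf{h}}$ to be a fixed rational function whose numerator must annihilate the zero at the origin together with the reflected out-of-disc zeros of $q^{(K)}$; using that $P_{\mathbf{v}^{(K)}}$ has no zeros on the unit circle, this leaves exactly $n_{K}$ free parameters, so $\textup{codim}\,\mathcal{S}_{\overline{\mathbf{v}}^{(K)}}\le n_{K}$. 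This computation is precisely the elementary substitute for Beurling's theorem that the paper is set up to avoid, and the care lies in counting which zeros of $q^{(K)}$ fall inside $\D$ and in checking that the resulting interpolation constraints are independent. (Equivalently, this step identifies $\mathbf{f}-\mathbf{g}^{(K)}$ as an exponential sum of length $n_{K}$ and applies Kronecker's Theorem~\ref{Kronecker}.)

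With the identity $\textup{codim}\,\mathcal{S}_{\overline{\mathbf{v}}^{(K)}}=n_{K}$ in hand the three assertions follow in a few lines. Combining it with $\textup{codim}\ge K$ and $n_{K}\le K$ pinches $n_{K}=K$, which is statement~(1). Since $\mathcal{S}_{\overline{\mathbf{v}}^{(K)}}\subseteq\Ker(\mathbf{\Gamma}_{\mathbf{f}-\mathbf{g}^{(K)}})$ we get $\Ker(\mathbf{\Gamma}_{\mathbf{f}-\mathbf{g}^{(K)}})^{\perp}\subseteq\mathcal{S}_{\overline{\mathbf{v}}^{(K)}}^{\perp}$, whence $\rank(\mathbf{\Gamma}_{\mathbf{f}-\mathbf{g}^{(K)}})\le n_{K}=K$; together with $\rank\ge K$ from Theorem~\ref{theo} this yields $\rank(\mathbf{\Gamma}_{\mathbf{f}-\mathbf{g}^{(K)}})=K$ and forces the inclusion $\mathcal{S}_{\overline{\mathbf{v}}^{(K)}}\subseteq\Ker(\mathbf{\Gamma}_{\mathbf{f}-\mathbf{g}^{(K)}})$ to be an equality. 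The norm identities in~(2) are immediate from $\mathbf{\Gamma}_{\mathbf{f}}-\mathbf{\Gamma}_{\mathbf{f}-\mathbf{g}^{(K)}}=\mathbf{\Gamma}_{\mathbf{g}^{(K)}}$ and the operator-norm values already established for $p=2$ and $p=1$. Finally, for pairwise distinct zeros the lower-bound argument identifies $\mathcal{S}_{\overline{\mathbf{v}}^{(K)}}^{\perp}$ with $\textup{clos}_{\ell^{2}}\textup{span}\{((z_{j}^{(K)})^{l})_{l=0}^{\infty}:j=1,\dots,K\}$, giving the explicit description of the kernel in~(3).
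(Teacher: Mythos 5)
Your proposal is correct, and its overall architecture coincides with the paper's own proof: establish that $\mathcal{S}_{\overline{\mathbf v}^{(K)}}$ has co-dimension exactly $n_K$, pinch this against $\mathrm{codim}\,\mathcal{S}_{\overline{\mathbf v}^{(K)}}\ge K$ (Theorem \ref{theo}) and $n_K\le K$ (Theorem \ref{aak2}) to force $n_K=K$, and then read off assertions (1)--(3) precisely as you do. Where you genuinely depart from the paper is in the crux, the bound $\mathrm{codim}\,\mathcal{S}_{\overline{\mathbf v}^{(K)}}\le n_K$. The paper proves the equivalent inclusion $\{{\mathbf u}\in\ell^2: P_{\overline{\mathbf u}}(z_j^{(K)})=0,\ j=1,\dots,n_K\}\subseteq\mathcal{S}_{\overline{\mathbf v}^{(K)}}$ Toeplitz-algebraically: it factors $\overline{\mathbf u}={\mathbf T}_{{\mathbf b}^{(K)}}{\mathbf w}$ through the Blaschke factor and then solves ${\mathbf w}={\mathbf T}_{{\mathbf u}^{(K)}}{\mathbf y}$ using an explicit inverse of ${\mathbf T}_{{\mathbf u}^{(K)}}$ extracted from the rational form (\ref{u}). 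You instead compute $\mathcal{S}_{\overline{\mathbf v}^{(K)}}^{\perp}$ head-on in Fourier terms: with $P_{{\mathbf v}^{(K)}}(1/z)=z\hat q(z)/P(z)$, $\hat q(z):=z^{N-1}q^{(K)}(1/z)$, the vanishing of all non-negative Fourier coefficients of $P_{\mathbf h}(e^{\mathrm{i}t})P_{{\mathbf v}^{(K)}}(e^{-\mathrm{i}t})$ becomes $\oint P_{\mathbf h}(z)\hat q(z)P(z)^{-1}z^{-m}\,dz=0$ for all $m\ge 0$, and a residue computation plus the identity theorem forces $P_{\mathbf h}(z)\,z\hat q(z)$ to be a polynomial of degree at most $N-1$ vanishing at $0$ and at the reflected exterior zeros $1/\beta_j$ of $q^{(K)}$ --- exactly the $n_K$-parameter family you describe; this does close, so your route is sound. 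It trades the paper's invertibility argument for ${\mathbf T}_{{\mathbf u}^{(K)}}$ against a contour-integral/interpolation argument, which is the one step your proposal leaves as a sketch and would have to be written out in full. Two remarks for that write-up: (i) both routes rely on $P_{{\mathbf v}^{(K)}}$ having no zeros on the unit circle, a fact the paper itself only asserts ``by construction''; (ii) your Vandermonde lower bound requires the in-disc zeros to be pairwise distinct, but since the pinch uses only the upper bound, your proofs of (1) and (2) survive multiple zeros, and distinctness is assumed in (3) anyway --- the same implicit restriction as in the paper's proof.
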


\begin{proof}
1. First we show that ${\cal S}_{\overline{\mathbf v}^{(K)}}  = ({\rm clos}_{\ell^{2}} {\rm span} \{ ((z_{1}^{(K)})^{l})_{l=0}^{\infty}, \ldots , ((z_{n_{K}}^{(K)})^{l})_{l=0}^{\infty} \})^{\perp}$, where $z_{1}^{(K)}, \ldots , z_{n_{K}}^{(K)}$ are all pairwise different zeros of $P_{{\mathbf v}^{(K)}}(z)$  inside $\D$. Indeed for all $l \in {\N}_{0}$,
\begin{eqnarray*}
\langle ((z_{j}^{(K)})^{r})_{r=0}^{\infty}, S^{l} \overline{\mathbf v}^{(K)} \rangle_{\ell^{2}}
&=& \langle (S^{*})^{l} ((z_{j}^{(K)})^{r})_{r=0}^{\infty},  \overline{\mathbf v}^{(K)}\rangle_{\ell^{2}} \\
&=& \sum_{r=0}^{\infty} (z_{j}^{(K)})^{r+l} \, {v}_{r}^{(K)} \\
&=& (z_{j}^{(K)})^{l} \sum_{r=0}^{\infty} (z_{j}^{(K)})^{r} \, {v}_{r}^{(K)} = (z_{j}^{(K)})^{l} \, P_{{\mathbf v}^{(K)}} (z_{j}^{(K)}) =0.
\end{eqnarray*}
Thus, 
$$  {\cal S}_{\overline{\mathbf v}^{(K)}} \perp {\rm span} \{ ((z_{1}^{(K)})^{l})_{l=0}^{\infty}, \ldots , ((z_{n_{K}}^{(K)})^{l})_{l=0}^{\infty} \}). $$

Assume now, that  ${\mathbf u} \in \ell^{2}({\N}_{0})$ satisfies ${\mathbf u} \perp 
{\rm span} \{ ((z_{1}^{(K)})^{l})_{l=0}^{\infty}, \ldots , ((z_{n_{K}}^{(K)})^{l})_{l=0}^{\infty} \})$, i.e., that $P_{\overline{\mathbf u}}(z_{j}^{(K)}) =0$ for $j=1, \ldots , n_{K}$. We show that ${\mathbf u} \in {\cal S}_{\overline{\mathbf v}^{(K)}}$. 
We can rewrite 
$$ P_{\overline{\mathbf u}}(e^{i\omega}) = \prod_{j=1}^{n_{K}} \frac{(e^{i\omega} - z_{j}^{(K)} )}{(1 - \overline{z}_{j}^{(K)} e^{i\omega})} \, P_{{\mathbf w} }(e^{i\omega}) = B^{(K)} (e^{i\omega}) P_{{\mathbf w} }(e^{i\omega})$$
with the same Blaschke product as in (\ref{blal}), where $P_{\mathbf w}(e^{i\omega})$ still corresponds to a sequence ${\mathbf w} = (w_{l})_{l=0}^{\infty} \in \ell^{1}({\N}_{0})$. Equivalently, we have $\overline{\mathbf u} = {\mathbf T}_{{\mathbf b}^{(K)}}
{\mathbf w}$. 
Since ${\mathbf T}_{\overline{\mathbf v}^{(K)}}$ contains the columns $\overline{\mathbf v}^{(K)}, \, S\overline{\mathbf v}^{(K)}, \ldots $, the assertion ${\mathbf u} \in {\cal S}_{\overline{\mathbf v}^{(K)}}$ is equivalent to the assertion that there exists a sequence ${\mathbf y} \in \ell^{2}({\N}_{0})$, such that 
$$ \overline{\mathbf u} = {\mathbf T}_{{\mathbf v}^{(K)}} {\mathbf y}. $$
By Lemma \ref{TB} and (\ref{toep1}) this is equivalent to 
$$ {\mathbf T}_{{\mathbf b}^{(K)}} {\mathbf w} = {\mathbf T}_{{\mathbf b}^{(K)}} {\mathbf T}_{{\mathbf u}^{(K)}} \, {\mathbf y},$$ and thus to
$$ {\mathbf w} = {\mathbf T}_{{\mathbf b}^{(K)}}^{*} {\mathbf T}_{{\mathbf b}^{(K)}}
 {\mathbf w} = {\mathbf T}_{{\mathbf u}^{(K)}} \, {\mathbf y}. $$
 The assertion now follows since ${\mathbf T}_{{\mathbf u}^{(K)}}$ is invertible.
 Indeed, (\ref{u}) implies 
 $$ P_{{\mathbf u}^{(K)}}(z) = \frac{1}{\sigma_{K}} \frac{\prod\limits_{j=1}^{n_{K}} (1- \overline{z}_{j}^{(K)} z) \prod\limits_{j=n_{K}+1}^{N-1} (-\beta_{j}^{(K)}) (1 - (\beta_{j}^{(K)})^{(-1)} z)}{\prod\limits_{j=1}^{N}(1- z_{j} z)}, $$
 and thus
 $$ {\mathbf T}_{{\mathbf u}^{(K)}}^{-1} = \frac{\sigma_{K}}{\prod_{j=n_{K}+1}^{N-1} (-\beta_{j}^{(K)})} \, \left( \prod_{j=1}^{n_{K}} {\mathbf T}_{\overline{z}_{j}^{(K)}} \right)
 \left(\prod_{j=n_{K}+1}^{N-1} {\mathbf T}_{({\beta}_{j}^{(K)})^{-1}} \right) \, {\mathbf T}_{\tilde{\mathbf  p}},
 $$
 where ${\mathbf T}_{\overline{z}_{j}^{(K)}}$, ${\mathbf T}_{({\beta}_{j}^{(K)})^{-1}}$ and ${\mathbf T}_{\tilde{\mathbf  p}}$ are the infinite Toeplitz matrices  generated by the sequences $((\overline{z}_{j}^{(K)})^{r})_{r=0}^{\infty}$, $(({\beta}_{j}^{(K)})^{-r})_{r=0}^{\infty}$ and by the finite sequence 
 $\tilde{\mathbf p} = (1, p_{N-1}, \ldots , p_{0})$ containing the coefficients of the Prony polynomial in (\ref{prony}).
 
 2. By Lemma \ref{aak2} we have $n_{K} \le K$, i.e., ${\cal S}_{\overline{\mathbf v}^{(K)}}$ possesses at most  co-dimension $K$. On the other hand, ${\cal S}_{\overline{\mathbf v}^{(K)}} \subseteq \Ker ({\mathbf \Gamma}_{{\mathbf f} - {\mathbf g}^{(K)}})$ and $\Ker ({\mathbf \Gamma}_{{\mathbf f} - {\mathbf g}^{(K)}})$  has at least co-dimension $K$ by Theorem \ref{theo}. Thus, $n_{K} = K$, i.e., $P_{{\mathbf v}^{(K)}}(z)$ possesses exactly $K$ zeros in $\D$, and ${\cal S}_{\overline{\mathbf v}^{(K)}} = \Ker ({\mathbf \Gamma}_{{\mathbf f} - {\mathbf g}^{(K)}})$ has co-dimension $K$. Assertion (2) follows directly from Theorem \ref{theo}.
 \end{proof}
\medskip

{\bf Proof of Theorem \ref{AAK}}.
Theorem \ref{AAK} is now a corollary of Theorem \ref{AAK1}.
Theorem \ref{AAK1} contains the explicit sequence $\tilde{\mathbf f} = {\mathbf g} - {\mathbf f}$. From Theorem \ref{AAK1}(3) it follows that $\tilde{\mathbf f} \in {\rm clos}_{\ell^{2}} {\rm span} \{ ((z_{1}^{(K)})^{l})_{l=0}^{\infty}, \ldots , ((z_{K}^{(K)})^{l})_{l=0}^{\infty} \}$,
i.e., it can be written as a finite linear combination of the form (\ref{eq:AAK1}). 
Moreover, for ${\bf e}_{0}:=(1,0,0, \ldots) \in \ell^{1} \subset \ell^{2}$ we have for $p\in \{ 1, 2 \}$
$$ \| {\bf f} \|_{p} = \frac{\| {\bf \Gamma}_{\bf f} {\bf e}_{0} \|_{p}}{\| {\bf e}_{0} \|_{p}} \le \| {\bf \Gamma}_{\bf f} \|_{\ell^{p} \to \ell^{p}}. $$
Thus the assertion follows. \hfill $\Box$
\bigskip

\noindent
{\bf Connection to Prony's method.}
There is now obviously a close connection to Prony's method.
When taking a zero-(con)-eigenvector ${\mathbf v}$ of the Hankel operator ${\mathbf \Gamma}_{\mathbf f}$ generated by ${\mathbf  f}$ in (\ref{eq:expsum}), then by Theorem \ref{zerovectors}, the Laurent polynomial corresponding to ${\mathbf v}$ satisfies $P_{\overline{\mathbf v}}(z_{j}) =0$ for all $j=1, \ldots , N$ and has therefore at least $N$ zeros inside the unit disk $\D$. In particular we have

\begin{corollary} \label{pronymethod}
Let ${\mathbf v} = {\mathbf v}^{(N)} \in \ell^{1}$ be a zero-(con)-eigenvector of ${\mathbf \Gamma}_{{\mathbf f}}$ with ${\mathbf f} $ in (\ref{eq:expsum}), such that $$P_{\overline{\mathbf v}}(z) =  \prod_{j=1}^{N} (z - z_{j}) = P(z)$$
is the Prony polynomial in (\ref{prony}).
Then 
$${\Ker} ({\mathbf \Gamma}_{{\mathbf f}}) = {\cal S}_{\overline{\mathbf v}} = {\rm clos}_{\ell^{2}} {\rm span} \,  \{ S^{l} \overline{\mathbf v}^{(N)} : l \in \N_{0} \} 
 =  ({\rm clos}_{\ell^{2}} {\rm span} \{ (z_{1}^{l})_{l=0}^{\infty}, \ldots , (z_{N}^{l})_{l=0}^{\infty} \})^{\perp}.$$
\end{corollary}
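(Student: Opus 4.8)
The plan is to treat this as the degenerate endpoint $K=N$, $\sigma=0$ of Theorem~\ref{AAK1}(3): the zero-con-eigenvector $\mathbf{v}^{(N)}$ plays the role of $\mathbf{v}^{(K)}$, the ``perturbation'' $\mathbf{g}^{(K)}$ is absent because the con-eigenvalue is $0$, and $P_{\overline{\mathbf{v}}}=P$ already carries all of its $N$ zeros inside $\D$. I would assemble the three claimed identities from the lemmas already available, letting Kronecker's Theorem take over the codimension bookkeeping that the Blaschke argument supplied before.

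First I would dispose of the easy parts. The middle identity is merely the definition of $\mathcal{S}_{\overline{\mathbf{v}}}$. Since $P_{\overline{\mathbf{v}}}(z_{j})=P(z_{j})=0$ for $j=1,\dots,N$, Theorem~\ref{zerovectors} gives $\mathbf{\Gamma}_{\mathbf{f}}\overline{\mathbf{v}}=\mathbf{0}$, so $\overline{\mathbf{v}}\in\Ker(\mathbf{\Gamma}_{\mathbf{f}})$; the $\sh$-invariance of the kernel in Lemma~\ref{shiftinv}(i) then promotes this to $\mathcal{S}_{\overline{\mathbf{v}}}\subseteq\Ker(\mathbf{\Gamma}_{\mathbf{f}})$.

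Next I would identify the kernel with the orthogonal complement on the right. Applying Theorem~\ref{zerovectors} with $\overline{\mathbf{u}}$ in the role of $\mathbf{v}$ shows that $\mathbf{u}\in\Ker(\mathbf{\Gamma}_{\mathbf{f}})$ if and only if $P_{\mathbf{u}}(z_{j})=0$ for all $j$. Since $P_{\mathbf{u}}(z_{j})=\langle\mathbf{u},(\overline{z}_{j}^{\,r})_{r=0}^{\infty}\rangle_{\ell^{2}}$, this is exactly orthogonality to the $N$ geometric sequences, whence $\Ker(\mathbf{\Gamma}_{\mathbf{f}})=(\mathrm{clos}_{\ell^{2}}\mathrm{span}\{(\overline{z}_{j}^{\,l})_{l=0}^{\infty}\})^{\perp}$. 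This is where I would be careful: the bilinear pairing $P_{\mathbf{u}}(z_{j})=\sum_{r}u_{r}z_{j}^{r}$ becomes a Hermitian inner product only after conjugating one node, so the sequences entering the complement carry the conjugated nodes $\overline{z}_{j}$, and matching the displayed statement requires tracking this conjugation. Together with the previous paragraph this already yields $\mathcal{S}_{\overline{\mathbf{v}}}\subseteq\Ker(\mathbf{\Gamma}_{\mathbf{f}})=(\mathrm{clos}_{\ell^{2}}\mathrm{span}\{(\overline{z}_{j}^{\,l})\})^{\perp}$.

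The remaining and only substantial step is the reverse inclusion $\Ker(\mathbf{\Gamma}_{\mathbf{f}})\subseteq\mathcal{S}_{\overline{\mathbf{v}}}$, equivalently $\mathrm{codim}\,\mathcal{S}_{\overline{\mathbf{v}}}\le N$; I expect this to be the crux, since the two inclusions above only pin $\mathcal{S}_{\overline{\mathbf{v}}}$ inside a codimension-$N$ space without forcing equality. I would argue exactly as in the proof of Theorem~\ref{AAK1}(3): factor $P_{\overline{\mathbf{v}}}=P=B\cdot P_{\mathbf{u}^{(N)}}$, where $B$ is the Blaschke product carrying all $N$ zeros $z_{1},\dots,z_{N}\in\D$ and $P_{\mathbf{u}^{(N)}}=c\prod_{j=1}^{N}(1-\overline{z}_{j}z)$ is outer with no zeros in $\overline{\D}$, so that $\overline{\mathbf{v}}=\mathbf{T}_{\mathbf{b}}\mathbf{u}^{(N)}$ in analogy with \eqref{toep1}. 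For $\mathbf{u}\in\Ker(\mathbf{\Gamma}_{\mathbf{f}})$ the vanishing $P_{\overline{\mathbf{u}}}(z_{j})=0$ lets me factor $\overline{\mathbf{u}}=\mathbf{T}_{\mathbf{b}}\mathbf{w}$ with $\mathbf{w}\in\ell^{1}$; then $\mathbf{T}_{\mathbf{b}}^{*}\mathbf{T}_{\mathbf{b}}=\mathbf{I}$ from Lemma~\ref{toeplitz}(1) reduces membership $\mathbf{u}\in\mathcal{S}_{\overline{\mathbf{v}}}$ to solving $\mathbf{w}=\mathbf{T}_{\mathbf{u}^{(N)}}\mathbf{y}$, which is solvable because $\mathbf{T}_{\mathbf{u}^{(N)}}$ is an invertible triangular Toeplitz matrix ($1/P_{\mathbf{u}^{(N)}}$ expands into an $\ell^{1}$ sequence since $P_{\mathbf{u}^{(N)}}$ has no zeros on $\overline{\D}$). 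This gives $\mathrm{codim}\,\mathcal{S}_{\overline{\mathbf{v}}}\le N$; combined with $\mathrm{rank}\,\mathbf{\Gamma}_{\mathbf{f}}=N$ from Theorem~\ref{Kronecker}, which fixes $\mathrm{codim}\,\Ker(\mathbf{\Gamma}_{\mathbf{f}})=N$, both inclusions become equalities and all three sets coincide.
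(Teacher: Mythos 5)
Your proposal is correct, and it is essentially the paper's own argument rather than a different route. The paper's proof is a terse codimension count --- ${\cal S}_{\overline{\mathbf v}} \subseteq \Ker({\mathbf \Gamma}_{\mathbf f})$ by Lemma \ref{shiftinv}, orthogonality of the kernel to the geometric sequences ``as before'', and the assertion that both spaces have co-dimension $N$ --- and your explicit reverse inclusion via the Blaschke--Toeplitz factorization, together with Theorem \ref{Kronecker} fixing the codimension of the kernel, is exactly the Theorem \ref{AAK1}(3) machinery on which the paper's unproved claim that ${\cal S}_{\overline{\mathbf v}}$ has co-dimension $N$ rests. You have filled in the paper's ellipses, not changed the argument.

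The one point where you deviate from the printed statement --- the conjugation of the nodes --- deserves emphasis, because your version is the correct one. Under the hypothesis $P_{\overline{\mathbf v}} = P$, Theorem \ref{zerovectors} gives $\mathbf u \in \Ker({\mathbf \Gamma}_{\mathbf f})$ if and only if $P_{\mathbf u}(z_j) = 0$ for all $j$, and in the Hermitian inner product this reads $\langle \mathbf u, (\overline{z}_j^{\,l})_{l=0}^{\infty}\rangle_{\ell^2} = 0$; so the third set in the display must be $({\rm clos}_{\ell^2}{\rm span}\{(\overline{z}_1^{\,l})_{l=0}^{\infty}, \ldots , (\overline{z}_N^{\,l})_{l=0}^{\infty}\})^{\perp}$. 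The paper's own proof line ``$\Ker({\mathbf \Gamma}_{\mathbf f}) \perp {\rm span}\{(z_j^l)_{l=0}^{\infty}\}$'' commits precisely this slip: for $N=1$, $z_1 = \mathrm{i}/2$, the vector $\mathbf u = (1, 2\mathrm{i}, 0, 0, \ldots)$ lies in the kernel (it is a multiple of the Prony coefficient sequence), yet $\langle \mathbf u, (z_1^l)_{l}\rangle_{\ell^2} = 2 \neq 0$. The statement as printed is literally true only when the node set is closed under conjugation (e.g.\ real nodes, as in the paper's numerical examples), so your care here is warranted and not pedantry.

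Two cosmetic slips in your final paragraph: having (correctly) characterized the kernel by $P_{\mathbf u}(z_j)=0$, the factorization should be applied to $\mathbf u$ itself, i.e.\ $\mathbf u = {\mathbf T}_{\mathbf b}\mathbf w$ and then $\mathbf w = {\mathbf T}_{{\mathbf u}^{(N)}}\mathbf y$; writing $P_{\overline{\mathbf u}}(z_j)=0$ and $\overline{\mathbf u} = {\mathbf T}_{\mathbf b}\mathbf w$ there contradicts your own kernel characterization (it copies the setting of Theorem \ref{AAK1}(3), where one starts from orthogonality rather than from kernel membership). Also, $\mathbf w$ need only lie in $\ell^2$, which suffices because ${\mathbf T}_{{\mathbf u}^{(N)}}^{-1}$ is generated by an $\ell^1$ sequence and is therefore bounded on $\ell^2$. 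Neither affects the structure of your argument.
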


\begin{proof} 
We observe as before that 
${\Ker} ({\mathbf \Gamma}_{{\mathbf f}}) \perp 
{\rm clos}_{\ell^{2}} {\rm span} \{ (z_{1}^{l})_{l=0}^{\infty}, \ldots , (z_{N}^{l})_{l=0}^{\infty} \}$. Since on the one hand 
${\cal S}_{\overline{\mathbf v}} \subseteq {\Ker} ({\mathbf \Gamma}_{{\mathbf f}})$ by Lemma \ref{shiftinv}, and both ${\Ker} ({\mathbf \Gamma}_{{\mathbf f}})$ and ${\cal S}_{\overline{\mathbf v}}$ have co-dimension $N$, 
equality follows.
\end{proof}

\begin{remark}
The proof given in this subsection does not explicitly use the Theorems of Beurling and Nehari for Hankel operators.
Nehari's result states that the norm of the operator ${\bf \Gamma}_{\bf f}$ is equal to the infimum of the $L^{\infty}$-norm over all bounded function $2\pi$-periodic functions whose Fourier coefficients coincide with $f_{k}$ for $k\in {\N}_{0}$, see e.g. \cite{Young}. This result is ``hidden'' in Lemma \ref{lem2}, where a sequence ${\bf w}$ is constructed by the Fourier coefficients of a special function with norm $1$ in $L^{\infty}$.

Beurling's theorem essentially says that the linear span of all shifts of a given sequence ${\bf v}$ in $\ell^{2}$ is characterized by the inner factor of its 
corresponding Laurent polynomial $P_{{\bf v}}(z)$. Thus assertion (3) of Theorem \ref{AAK1} is a direct consequence of Beurling's theorem. We have proven it directly by showing invertibility of the Toeplitz matrix ${\bf T}_{{\bf u}^{(K)}}$.
\end{remark}


\section{Numerical Examples}
\label{sec4} 
\setcounter{equation}{0}

In this section we present some numerical examples demonstrating the performance of our algorithm. In all examples the approximate Prony method APM2 provided in \cite{APM} was used in the first step of the algorithm in Section $3$.
\medskip

\noindent
{\bf Example 1.} We approximate the function $f:\R^+ \to \C$ of the form
$$
f(x) = \sum_{j=1}^N a_j\, z_{j}^{x}\,, 
$$
with $N=10$ using $M=50$ samples. We denote by ${\bf f}:=(f_k)_{k=0}^M:=(f(k))_{k=0}^M$ the vector of samples of the function $f$ and by ${\bf \tilde{f}}^{(n)}:=(\tilde{f}^{(n)}_k)_{k=0}^M$ the output vector of the $n$-term approximation $\tilde{f}$ of $f$. Both, the nodes $z_j$ and the coefficients $a_j,~j=1,\ldots,10$ were chosen randomly in $\D$ and in the interval $(0,1)$ respectively and are given as follows
$$
\begin{pmatrix}
z_1 \\
z_2 \\ 
z_3 \\ 
z_4 \\ 
z_5 \\ 
z_6 \\ 
z_7 \\ 
z_8 \\ 
z_9 \\ 
z_{10} \end{pmatrix} =
\left(\begin{array}{r}
  -0.0159 + 0.3739i\\
  -0.0770 + 0.0394i\\
  -0.0639 - 0.1791i\\
  -0.2324 + 0.5268i\\
   0.0102 + 0.4511i\\
   0.0129 + 0.0602i\\
   0.3812 + 0.1470i\\
   0.3538 + 0.1045i\\
   0.1732 - 0.3507i\\
  -0.1457 - 0.2385i\end{array}\right), \qquad 
\begin{pmatrix}
a_1 \\
a_2 \\ 
a_3 \\ 
a_4 \\ 
a_5 \\ 
a_6 \\ 
a_7 \\ 
a_8 \\ 
a_9 \\ 
a_{10} \end{pmatrix} = 
\left(\begin{array}{r}
   0.4709 + 0.4302i\\
   0.2305 + 0.1848i\\
   0.8443 + 0.9049i\\
   0.1948 + 0.9797i\\
   0.2259 + 0.4389i\\
   0.1707 + 0.1111i\\
   0.2277 + 0.2581i\\
   0.4357 + 0.4087i\\
   0.3111 + 0.5949i\\
   0.9234 + 0.2622i \end{array}\right).
$$
\begin{figure}[h]
  \centering
      \includegraphics[width=0.4\textwidth]{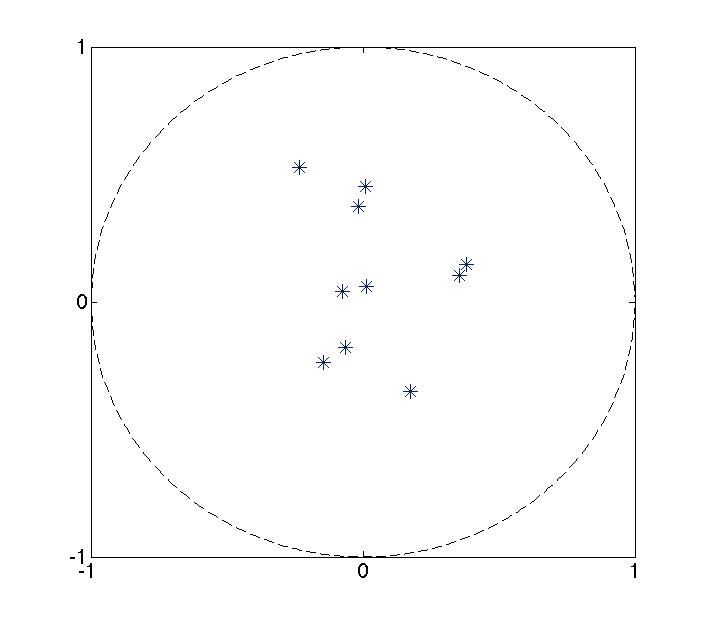}
  \caption{The nodes $z_1,\ldots,z_{10}$ in the unit circle from Example 1.}
\end{figure}
The minimization problem from step $4$ was performed by the least squares method using the given $M$ samples. In the first step of the algorithm the accuracies $\varepsilon_1 =\varepsilon_2 =10^{-15}$, the radius $r =1$ and the upper bound for the number of exponentials $L=20$ were chosen for APM2. The following table shows the coneigenvalues $\sigma_n$ of the matrix ${\bf A}_{N} {\bf Z}_{N}$ and the corresponding approximation errors for different values of $n$.
\renewcommand*\arraystretch{1.2}
$$\setlength\arraycolsep{15pt}
 \begin{array}{|c|c|c|} \hline
 n		& \sigma_n					& \| {\bf f} - {\bf \tilde{f}}^{(n)} \|_{{2}}	\\ \hline
1		&   4.4340\textnormal{e-}01		&  4.4142\textnormal{e-}01		\\
2		&   5.5171\textnormal{e-}02		&  5.3850\textnormal{e-}02		\\
3		&   1.8185\textnormal{e-}02		&  1.8096\textnormal{e-}02		\\
4		&   8.1149\textnormal{e-}03		&  8.1145\textnormal{e-}03		\\
5		&   7.8571\textnormal{e-}05		&  7.8571\textnormal{e-}05		\\
6		&   4.3647\textnormal{e-}06		&  4.3647\textnormal{e-}06		\\
7		&   2.6711\textnormal{e-}07 		&  2.6711\textnormal{e-}07		\\
8		&   6.2531\textnormal{e-}08 		&  6.2531\textnormal{e-}08		\\
9 		&   1.4512\textnormal{e-}10 		&  1.4512\textnormal{e-}10		\\ \hline
 \end{array}
$$
\renewcommand*\arraystretch{1}

\noindent
{\bf Example 2.} In this example we approximate the function $f(x) = 1/x$ using $M=100$ samples in the interval $[1,50]$. Let ${\bf f}:=(f_k)_{k=0}^M$ be the vector of samples of $f$ and ${\bf \tilde{f}}^{(n)}:=(\tilde{f}^{(n)}_k)_{k=0}^M$ the output vector of its $n$-term approximation by above algorithm. The initial $N=11$ nodes $z_j$ and weights $a_j$ were obtained with parameters $\varepsilon_1 =\varepsilon_2 =10^{-10}, r =1,1$ and $L=23$ by applying APM2 and are given as follows:
$$
\begin{pmatrix}
z_1 \\
z_2 \\ 
z_3 \\ 
z_4 \\ 
z_5 \\ 
z_6 \\ 
z_7 \\ 
z_8 \\ 
z_9 \\ 
z_{10} \\
z_{11} \end{pmatrix} =
\left(\begin{array}{r}
    0.9959 \\
    0.9781 \\
    0.9443 \\
    0.8919 \\
    0.8178 \\
    0.7198 \\
    0.5981 \\
    0.4568 \\
    0.3060 \\
    0.1634 \\
    0.0533\end{array}\right), \qquad 
\begin{pmatrix}
a_1 \\
a_2 \\ 
a_3 \\ 
a_4 \\ 
a_5 \\ 
a_6 \\ 
a_7 \\ 
a_8 \\ 
a_9 \\ 
a_{10} \\
a_{11} \end{pmatrix} = 
\left(\begin{array}{r}
    0.0214 \\
    0.0507 \\
    0.0818 \\
    0.1137 \\
    0.1422 \\
    0.1597 \\
    0.1585 \\
    0.1339 \\
    0.0895 \\
    0.0405 \\
    0.0082\end{array}\right).
$$
In the following table we compare the $\ell^2$-error of the above algorithm with the $n$-term exponential sum approximation obtained by W. Hackbusch in \cite{HB}. Let ${\bf f}_H^{(n)}$ be a vector of samples of the $n$-term approximation of $f$ from \cite{HB}, then we obtain the following errors using the same sampling for both approximations.
\renewcommand*\arraystretch{1.2}
$$\setlength\arraycolsep{15pt}
 \begin{array}{|c|c|c|c|} \hline
 n		& \sigma_n					& \| {\bf f} - {\bf \tilde{f}}^{(n)} \|_{{2}}	& \| {\bf f} - {\bf {f}}_H^{(n)} \|_{{2}}	\\ \hline
1		&  1.5789\textnormal{e-}00		&  1.0479\textnormal{e-}00		& -						\\
2		&  4.3137 \textnormal{e-}01		&  3.7340\textnormal{e-}01		& 1.4145\textnormal{e-}01		\\
3		&   9.9203\textnormal{e-}02		&  9.4372\textnormal{e-}02		& 2.4771\textnormal{e-}02		\\
4		&   1.9627\textnormal{e-}02		&  1.9207\textnormal{e-}02		& 4.4988\textnormal{e-}03		\\
5		&   3.3233\textnormal{e-}03		&  3.2870\textnormal{e-}03		& 7.8479\textnormal{e-}04		\\
6		&   4.7360\textnormal{e-}04		&  4.6840\textnormal{e-}04		& 1.3138\textnormal{e-}04		\\
7		&   5.5123\textnormal{e-}05 		&  5.4309\textnormal{e-}05		& 2.2138\textnormal{e-}05		\\
8		&   4.9665\textnormal{e-}06 		&  4.8884\textnormal{e-}06		& 3.6552\textnormal{e-}06		\\
9 		&   3.1299\textnormal{e-}07 		&  3.1581\textnormal{e-}07		& 5.9684\textnormal{e-}07		\\
10 		&   1.0840\textnormal{e-}08 		&  4.5328\textnormal{e-}08		& 9.8033\textnormal{e-}08		\\ \hline
 \end{array}
$$
Note that in \cite{HB} an algorithm for function approximation based on the Remez algorithm was used, whereas we approximate sequences of samples. The nodes $\tilde{z}_j$ and weights $\tilde{a}_j$ for $j=1,\ldots,10$ obtained by Hackbusch are very different from the ones computed by our algorithm, especially for small $n$. Even though, for the most values of $n$ we obtain the same order of the $\ell^2$-error. For $n\geq 9$ the error of our algorithm is even slightly better. The nodes and weights obtained by our algorithm are given below.
$$\setlength\arraycolsep{4pt}
 \begin{array}{|c|cccccccccc|} \hline
n			& 1		& 2		& 3		& 4		& 5		& 6		& 7		& 8		& 9		& 10		\\ \hline
\tilde{z}_1		& 0.9804	& 0.8725	& 0.6982	& 0.5254	& 0.3856	& 0.2816 	& 0.2063	& 0.1516	& 0.1112	& 0.0802	\\
\tilde{z}_2		&		& 0.9933  	& 0.9545  & 0.8706   & 0.7544  & 0.6279  & 0.5079  	& 0.4022	& 0.3123  & 0.2355	\\
\tilde{z}_3		&		&		& 0.9953	& 0.9710	& 0.9187	& 0.8386	& 0.7391	& 0.6309	& 0.5220  & 0.4157	\\
\tilde{z}_4		&		&		&		& 0.9958	& 0.9760	& 0.9358	& 0.8731	& 0.7901	& 0.6917  & 0.5814	\\
\tilde{z}_5		&		&		&		&		& 0.9959	& 0.9776	& 0.9421	& 0.8869	& 0.8113  & 0.7154	\\
\tilde{z}_6		&		&		&		&		&		& 0.9959 	& 0.9780	& 0.9439	& 0.8911  & 0.8171	\\
\tilde{z}_7		&		&		&		&		&		&		& 0.9959	& 0.9780	& 0.9443  & 0.8919	\\
\tilde{z}_8		&		&		&		&		&		&		&		& 0.9959	& 0.9781  & 0.9443	\\
\tilde{z}_9		&   		&		&		&		&		&		&		&		& 0.9959	& 0.9781	\\
\tilde{z}_{10}	&		&		&		&		&		&		&		&		& 		& 0.9959	\\ \hline
 \end{array}
$$
$$\setlength\arraycolsep{4pt}
 \begin{array}{|c|cccccccccc|} \hline
n			& 1		& 2		& 3		& 4		& 5		& 6		& 7		& 8		& 9		& 10		\\ \hline
\tilde{a}_1		& 0.2419	& 0.6728	& 0.7437	& 0.5561	& 0.3499	& 0.2045 	& 0.1158	& 0.0645	& 0.0354	& 0.0186	\\
\tilde{a}_2		&		& 0.0434  	& 0.1717  	& 0.3228   & 0.3886  & 0.3592  & 0.2832  	& 0.2011	& 0.1320  & 0.0797	\\
\tilde{a}_3		&		&		& 0.0290	& 0.0921	& 0.1756	& 0.2434	& 0.2669	& 0.2474	& 0.2014  & 0.1455	\\
\tilde{a}_4		&		&		&		& 0.0225	& 0.0637	& 0.1173	& 0.1685	& 0.1988	& 0.2001  & 0.1753	\\
\tilde{a}_5		&		&		&		&		& 0.0217	& 0.0542	& 0.0929	& 0.1318	& 0.1597  & 0.1686	\\
\tilde{a}_6		&		&		&		&		&		& 0.0215 	& 0.0514	& 0.0841	& 0.1172  & 0.1444	\\
\tilde{a}_7		&		&		&		&		&		&		& 0.0214	& 0.0508	& 0.0821  & 0.1140	\\
\tilde{a}_8		&		&		&		&		&		&		&		& 0.0214	& 0.0507  & 0.0818	\\
\tilde{a}_9		&   		&		&		&		&		&		&		&		& 0.0214	& 0.0507	\\
\tilde{a}_{10}	&		&		&		&		&		&		&		&		& 		& 0.0214	\\ \hline
 \end{array}
$$

\section*{Acknowledgement}
The authors gratefully acknowledge the funding of this work by the DFG in the framework of the GRK 2088 and the project 
PL 170/16-1.


\end{document}